\newtheorem{theorem}{Theorem}[section]
\newtheorem{corollary}[theorem]{Corollary}
\newtheorem{lemma}[theorem]{Lemma}
\newtheorem{proposition}[theorem]{Proposition}
\theoremstyle{definition}
\newtheorem{definition}[theorem]{Definition}
\newtheorem{remark}[theorem]{Remark}
\newtheorem{example}[theorem]{Example}
\numberwithin{equation}{section}
\newcommand{\placeholder}{{\raise1.5pt\hbox{$\scriptscriptstyle\bullet$}}} 
\newcommand{\indexplural}{} 
\title[The Infrastructure of a Global Field of Arbitrary Unit Rank]
      {The Infrastructure of a Global Field \\ of Arbitrary Unit Rank} 
\author[Felix Fontein]{}
\thanks{This work has been supported in part by the Swiss National Science Foundation under grant
no.~107887.}
\subjclass[2000]{Primary: 11Y40; Secondary: 14H05, 11R27, 11R65}
\keywords{Infrastructures, giant steps, number fields, function fields, Riemann-Roch spaces, fundamental units}
\DeclareMathOperator{\Red}{Red}
\DeclareMathOperator{\fRep}{Rep^{\mathit{f}}}
\DeclareMathOperator{\fRepast}{Rep^{\mathit{f}\ast}}
\DeclareMathOperator{\bs}{bs}
\DeclareMathOperator{\gs}{gs}
\DeclareMathOperator{\Norm}{Norm}
\DeclareMathOperator{\reduce}{red}
\newcommand{\N}{\mathbb{N}}
\newcommand{\Z}{\mathbb{Z}}
\newcommand{\Q}{\mathbb{Q}}
\newcommand{\R}{\mathbb{R}}
\newcommand{\C}{\mathbb{C}}
\newcommand{\F}{\mathbb{F}}
\newcommand{\G}{\mathbb{G}}
\newcommand{\calE} {\mathcal{E}}
\newcommand{\calO} {\mathcal{O}}
\newcommand{\calP} {\mathcal{P}}
\newcommand{\frakturFont}[1]{\mathfrak{#1}}
\newcommand{\fraka} {\frakturFont{a}}
\newcommand{\frakb} {\frakturFont{b}}
\newcommand{\frakm} {\frakturFont{m}}
\newcommand{\frakp} {\frakturFont{p}}
\newcommand{\abs}[1]{{\left|{#1}\right|}}
\newcommand{\ggen}[1]{{\left\langle{#1}\right\rangle}}
\DeclareMathOperator{\divisor}{div}
\DeclareMathOperator{\Div}{Div}
\DeclareMathOperator{\Princ}{Princ}
\DeclareMathOperator{\Pic}{Pic}
\DeclareMathOperator{\Id}{Id}
\DeclareMathOperator{\PId}{PId}
\newenvironment{enuma}{\begin{enumerate}[\upshape (a)]}{\end{enumerate}}
\newenvironment{enum1}{\begin{enumerate}[\upshape (1)]}{\end{enumerate}}
\begin{document}

  \maketitle
  
  \centerline{\scshape Felix Fontein}
  \medskip
{\footnotesize
\centerline{Department of Mathematics \&\ Statistics}
\centerline{University of Calgary}
\centerline{2500 University Drive NW}
\centerline{Calgary, Alberta, Canada T2N 1N4}
}

  \begin{abstract}
    In this paper, we show a general way to interpret the infrastructure of a global field of
    arbitrary unit rank. This interpretation generalizes the prior concepts of the giant step
    operation and $f$-representations, and makes it possible to relate the infrastructure to the
    (Arakelov) divisor class group of the global field. In the case of global function fields, we
    present results that establish that effective implementation of the presented methods is indeed
    possible, and we show how Shanks' baby-step giant-step method can be generalized to this
    situation.
  \end{abstract}

  \section{Introduction}
  
  The infrastructure of a global field, i.e.\ of a number field or a function field over a finite
  field, is a group-like algebraic structure. It is a crucial ingredient in the computation of the
  regulator, a system of fundamental units, and the order and structure of the ideal class group. In
  the case of a one-dimensional infrastructure, which occurs in in fields of unit rank~one, this
  group-like structure was first used by D.~Shanks to compute the regulator of a real quadratic
  number field via a baby-step giant-step algorithm.
  
  In this paper, we present a framework of infrastructure that unifies number fields and function
  fields. The crucial tool to accomplish this are $f$-representations; these represent a group well
  suited for computations into which the infrastructure embeds. Using $f$-representations, we obtain
  giant steps, which are an important tool in algorithms of baby-step giant-step type. We establish
  that $f$-representations require little storage and lend themselves very well to computation. They
  can be efficiently used for determining a system of fundamental units of a global function
  field. We provide evidence for this by presenting preliminary implementation results as well as
  non-trivial numerical examples.
  
  The idea behind $f$-representations was described in \cite{ff-pohlighellman} in the
  one-di\-men\-sion\-al case, i.e.\ for infrastructures obtained from global fields of unit
  rank~one. The concept of $f$-representations goes back to $(f, p)$-representations, which were
  introduced in the context of cryptography in real quadratic number fields by D.~H\"uhnlein and
  S.~Paulus \cite{huehnlein-paulus} and M.~J.~Jacobson~Jr., R.~Scheidler and H.~C.~Williams
  \cite{JSW-RQFKE}.
  
  Infrastructures of number fields and, more recently, of function fields have been studied for some
  time. Their investigation has its roots in C.~F.~Gau\ss' study of the composition of binary
  quadratic forms, as well as J.-L.~Lagrange's continued fraction algorithm. The infrastructure
  first appeared explicitly in the context of generalizing continued fraction expansion. In his PhD
  dissertation, G.~Vorono\u\i\ found a generalization of continued fraction expansion by minima of
  lattices and formulated an algorithm to find a system of fundamental units of a cubic number field
  \cite{delonefaddeev}.
  
  The set of minima of a global field was studied, for example in
  \cite{bergmannnetze,hellegouarchpaysant2}, and was used for computing fundamental units in number
  fields, for example in \cite{pohst-zassenhaus-fundamentalunits1, steiner-units, appelgateonishi,
  pohst-zassenhaus-fundamentalunits2,pohst-zassenhaus-fundamentalunits3}. J.~A.~Buchmann generalized
  Vorono\u\i's algorithm to number fields of unit rank one and two
  \cite{genvoronoiIuII}. Subsequently, he presented a generalization of Lagrange's algorithm for
  computing fundamental units in arbitrary number fields in $\calO(R \cdot
  \abs{\Delta}^\varepsilon)$ binary operations; here $\varepsilon > 0$ arbitrary, $R$ is the
  regulator and $\Delta$ the discriminant of the number field \cite{genlagrange}. Note that $R =
  \calO(\abs{\Delta}^{1/2 + \varepsilon})$ for any~$\varepsilon > 0$.
  
  In 1972, D.~Shanks \cite{shanks-infra} discovered that the principal infrastructure of a real
  quadratic number field supports a group-like structure. With every element of the principal
  infrastructure is associated a distance which imposes an ordering on this set. The infrastructure
  supports two operations: a \emph{baby step}, which proceeds cyclically from one element to the
  next in this ordering, and a \emph{giant step}, which is akin to multiplication in a cyclic group
  and under which distances behave almost additively. As a result, the principal infrastructure is
  almost an abelian group under giant steps that only slightly fails associativity. Using this
  group-like behavior, Shanks was able to compute the regulator and therefore the absolute value of
  a fundamental unit of a real quadratic number field in $\calO(\sqrt{R})$ steps instead of the
  $\calO(R)$ steps required by the classical algorithm of Lagrange. Note that writing down a system
  of fundamental units requires $\calO(R)$ binary operations, whence no algorithm can compute a
  system of fundamental units in time faster than $\calO(R)$. However, the logarithm of an absolute
  value of a fundamental unit can be computed faster. Shanks' method was further analyzed and
  refined by H.~W.~Lenstra, R.~Schoof, H.~C.~Williams and M.~C.~Wunderlich in
  \cite{lenstra-infrastructure, schoof-infra, williams-contfract-numbtheor-compus,
  williams-wunderlich}, and finally generalized to all number fields of unit rank~one by Buchmann
  and Williams \cite{buchmannwilliams-infrastructure}.
  
  Shanks' method was first extended to function fields in works of A.~Stein and H.~G.~Zimmer
  \cite{stein-da, stein-zimmer}, Stein and Williams \cite{stein-williams-regulator,
  stein-williams-regulator2} and Scheidler and Stein \cite{scheidler-stein-purelyunitrank1,
  scheidler-infrastructurepurelycubic}. The relationship between the infrastructure in real elliptic
  and hyperelliptic function fields and the divisor class group in their imaginary counterparts was
  investigated by Stein in \cite{stein-realelliptic}, and by S.~Paulus and H.-G.~R\"uck in
  \cite{paulus-rueck}.
  
  Shanks' discovery of the infrastructure also led to a number of cryptographic applications.  The
  first of these was a Diffie-Hellman-like key exchange protocol described by Buchmann and Williams,
  and later by Scheidler, Buchmann and Williams in
  \cite{buchmann-williams-qrkeyexchange,scheidler-buchmann-williams-qrkeyexchange}. This was
  extended in several ways, and additional encryption and signature schemes were proposed; some of
  these are described in \cite{biehl-buchmann-thiel, SSW-KEiRQCFF, JSW-RQFKE2, JSS-CPoRHC}. The
  security of these systems is argued to be based on the hardness of computing distances or
  computing the regulator; the hardness of these problems is analyzed, for example, in
  \cite{mueller-stein-thiel, jacobson-phd, maurer-phd, vollmer-phd}.
  
  All efficient algorithms and cryptosystems based on the infrastructure crucially require the giant
  step operation. This raises the question of whether a giant step can be defined and used
  efficiently in all global fields, not just of unit rank one. In the number field case, Buchmann
  showed in his habilitation thesis \cite{buchmann-habil} that there is in fact such a giant step,
  and that this giant step can be used to compute the absolute values of fundamental units in
  $\calO(\sqrt{R} \cdot \abs{\Delta}^\varepsilon)$ binary operations. Unfortunately, this algorithm
  was only published in Buchmann's thesis, which was written in German and is not easily accessible.
  Later, Schoof presented a modern treatment of the general number field case using Arakelov divisor
  theory \cite{schoofArakelov}. This is so far the most general treatment of infrastructure. It
  includes the concept of a giant step, even though Schoof does not give a baby-step giant-step
  algorithm like Buchmann's. Both Buchmann's and Schoof's giant steps rely on a simple reduction
  strategy: the infrastructure is in both cases a subset of the set of fractional ideals, whose
  elements are called ``reduced ideals'', and the giant step roughly corresponds to multiplying two
  such ideals. The result is in general not inside this set, but after finding a ``short'' element
  in the product and dividing by it, the resulting ideal will lie in this set. This process of
  chosing the short element is called \emph{reduction}.
  
  In this paper, we present for the first time a unified treatment of number fields and function
  fields and define infrastructure for any unit rank. Moreover, we provide a connection between the
  infrastructure and the (Arakelov) divisor class group and relate the arithmetic in these two
  objects. The key point is a more sophisticated reduction strategy, mimicking the reduction
  described by F.~He\ss\ for arithmetic in the divisor class groups of global function fields
  \cite{hessRR}. For that, we have to use a slightly different embedding of the reduced ideals into
  the Arakelov divisor class group than the one used by Schoof. We also do not use the oriented
  Arakelov divisor class group, but instead an equivalence relation on reduced ideals in case when
  there is no real embedding of the number field. This allows us to unify arithmetic in the
  (Arakelov) divisor class group of both number fields and function fields. Moreover, in contrast to
  Schoof's work, we ``parameterize'' the Arakelov divisor class group using equivalence classes of
  reduced divisors together with a finite set of real numbers, and can describe explicit arithmetic
  using this representation. This parameterization generalizes the aforementioned result by Paulus
  and R\"uck \cite{paulus-rueck} on hyperelliptic function fields, and, since it extends He\ss'
  approach, it also generalizes known arithmetic in imaginary hyperelliptic and superelliptic
  function fields \cite{hehcc, galbraith-paulus-smart}.
  
  This paper is organized as follows. We begin by giving an overview of the arithmetic in number
  fields and function fields in Section~\ref{fnf-intro}. Then we will provide an abstract treatment
  of one-dimensional infrastructures, based on \cite{ff-pohlighellman}, in
  Section~\ref{odi-intro}. We discuss how an abstract $n$-dimensional infrastructure can be defined
  in Section~\ref{ndiminfra}. We go on to describe reduced ideals in Section~\ref{reducedideals}. In
  Section~\ref{geninfra}, we show how to obtain an infrastructure in any global field satisfying the
  definition given in Section~\ref{ndiminfra}. We explore the relationship between the
  infrastructure and the divisor class group in Section~\ref{relationtodcg}. In
  Section~\ref{computationsfprep}, we show that $f$-representations can be used to effectively
  perform computation of fundamental units in function fields. Finally, we provide concluding
  remarks and pose some pertinent open questions in Section~\ref{conclusion}.
  
  \section{Arithmetic in Function Fields and Number Fields}
  \label{fnf-intro}

  Let $K$ be a global field, i.e., either a function field over a finite field of constants~$k$, or
  an algebraic number field. In the latter case, denote by $k^*$ the roots of unity of $K$ and set
  $k := k^* \cup \{ 0 \}$.
  
  If $K$ is an algebraic function field, we assume that $k$ is the exact field of constants of
  $K$. Let $x \in K$ be transcendental over $k$.\footnote{Note that we do not assume that $K / k(x)$
  is separable.} Let $\calO_K$ denote the integral closure of $k[x]$ in $K$ and $S$ the set of
  places of $K / k$ which do not correspond to prime ideals of $\calO_K$, i.e.\ the places of $K$
  lying over the infinite place of $k(x)$. Note that for any non-empty finite choice of $S$, one can
  find such an $x$ so that $S$ is the set of places lying over the infinite place of $k(x)$. We
  assume that $x$ and $S$ are fixed throughout this paper. In the number field case, let $\calO_K$
  denote the integral closure of $\Z$ in $K$ and $S$ the set of all archimedean places of $K$. In
  both cases, we denote by $\calP_K$ the set of all places of $K$. If $\frakp \in \calP_K$ is a
  non-archimedean place, let $\nu_\frakp$ be its normalized discrete valuation, $\calO_\frakp$ its
  valuation ring and $\frakm_\frakp$ its valuation ideal. All places in $S$ are called
  \emph{infinite}, and all others \emph{finite}. All finite places are non-archimedean.
  
  In the function field case, the group of divisors~$\Div(K)$ is the free abelian group generated by
  $\calP_K$. For a divisor $D = \sum_{\frakp \in \calP_K} n_\frakp \frakp$, the degree is defined as
  $\deg D := \sum_{\frakp \in \calP_K} n_\frakp \deg \frakp$. The divisors of degree~zero form a
  subgroup of $\Div(K)$, denoted by $\Div^0(K)$. For an element $f \in K^*$, the principal divisor
  of $f$ is defined by $(f) := \sum_{\frakp \in \calP_K} \nu_\frakp(f) \frakp \in \Div^0(K)$; the
  set of all such divisors forms the group $\Princ(K)$, and the quotient group $\Pic^0(K) :=
  \Div^0(K) / \Princ(K)$ is called the (degree zero) divisor class group of $K$. Moreover, we have
  the quotient~$\Pic(K) := \Div(K) / \Princ(K)$ together with the exact sequence $\xymatrix{ 0
    \ar[r] & \Pic^0(K) \ar[r] & \Pic(K) \ar[r]^{\;\; \deg} & \Z }$. Note that the last map (after
  restricting the codomain to the image) splits across this exact sequence, whence we have $\Pic(K)
  \cong \Pic^0(K) \times \Z$.
  
  In the number field case, the group of divisors~$\Div(K)$ is the direct product of the free
  abelian group generated by all places outside $S$ and the abelian group $\R^S$ of all tuples
  $(n_\frakp)_{\frakp \in S}$ of real numbers with pointwise addition. We write elements
  $(n_\frakp)_{\frakp \in S} \in \R^S$ additively as $\sum_{\frakp \in S} n_\frakp \frakp$. For
  $\frakp \in S$, let $\sigma : K \to \C$ be a corresponding embedding; define $\deg \frakp := 1$ if
  $\sigma(K) \subseteq \R$ and $\deg \frakp := 2$ elsewhere. Also define $\nu_\frakp(f) := -\log
  \abs{\sigma(f)}$ for any $f \in K^*$. If $\frakp$ is a finite place, i.e.\ $\frakp \not\in S$,
  define $\deg \frakp := \log \abs{\calO_\frakp / \frakm_\frakp}$. Here, $\log$ denotes the natural
  logarithm. The definition of the degree of a divisor and of a principal divisor is analogous to
  the function field case, as is the definition of $\Pic^0(K)$ and $\Pic(K)$, and we get $\Pic(K)
  \cong \Pic^0(K) \times \R$ in the same way as above.
  
  If $D = \sum_{\frakp \in \calP_K} n_\frakp \frakp$ is a divisor, the places $\frakp \in \calP_K$
  with $n_\frakp \neq 0$ form the \emph{support} of $D$. If $K$ is a global function field, let $q =
  \abs{k} < \infty$. For non-global function fields, let $q > 1$ be arbitrary. For number fields,
  let $q = e = \exp(1)$. Then define the absolute value with respect to a place $\frakp \in \calP_K$
  by $\abs{f}_\frakp := q^{-\nu_\frakp(f) \deg \frakp}$ for $f \in K^*$ and $\abs{0}_\frakp :=
  0$. The fact that principal divisors have degree~zero translates to the product formula
  $\prod_{\frakp \in \calP_K} \abs{f}_\frakp = 1$ for $f \in K^*$.
  
  In both number fields and function fields, a finitely generated $\calO_K$-submodule of $K$ is
  called a fractional ideal. Throughout this paper, we will often say ``ideal'' when we mean
  ``non-zero fractional ideal''. The set of non-zero fractional ideals~$\Id(\calO_K)$ forms a free
  abelian group under multiplication, with the set of non-zero prime ideals of $\calO_K$ as a
  basis. These prime ideals correspond to the places of $K$ outside $S$: if $\frakp$ is such a
  place, $\frakm_\frakp \cap \calO_K$ is the corresponding prime ideal of $\calO_K$. Moreover, we
  have a natural homomorphism $\Div(K) \to \Id(\calO_K)$ defined by $\sum n_\frakp \frakp \mapsto
  \prod_{\frakp \not\in S} (\frakm_\frakp \cap \calO_K)^{-n_\frakp}$. This homomorphism extends to a
  map $\Pic^0(K) \to \Pic(\calO_K)$, where $\Pic(\calO_K) := \Id(\calO_K) / \Princ(\calO_K)$ is the
  ideal class group of $\calO_K$, i.e.\ the quotient of $\Id(\calO_K)$ with the subgroup
  $\Princ(\calO_K) = \{ \frac{1}{f} \calO_K \mid f \in K^* \}$ of non-zero principal fractional
  ideals.
  
  Note that forming principal divisors or principal ideals give homomorphisms $K^* \to \Princ(K)
  \subseteq \Div^0(K)$, $f \mapsto (f)$ and $K^* \to \Princ(\calO_K) \subseteq \Id(\calO_K)$, $f
  \mapsto \frac{1}{f} \calO_K$. Finally, denote by $\Div_\infty^0(K)$ the set of divisors in
  $\Div^0(K)$ that are only supported at places in $S$. All the aforementioned maps give rise to the
  following commuting diagram with exact rows and columns: \[ \xymatrix@R-0.5cm{ & 0 \ar[d] & 0
  \ar[d] & 0 \ar[d] & \\ 0 \ar[r] & \calO_K^* / k^* \ar[r] \ar[d] & \Div_\infty^0(K) \ar[r] \ar[d] &
  T \ar[r] \ar[d] & 0 \\ 0 \ar[r] & K^* / k^* \ar[r] \ar[d] & \Div^0(K) \ar[r] \ar[d] & \Pic^0(K)
  \ar[r] \ar[d] & 0 \\ 0 \ar[r] & K^* / \calO_K^* \ar[r] \ar[d] & \Id(\calO_K) \ar[r] \ar[d] &
  \Pic(\calO_K) \ar[r] \ar[d] & 0 \\ & 0 & H \ar[r]^{\cong} \ar[d] & H' \ar[d] & \\ & & 0 & 0 & } \]
  Here, $T$, $H$ and $H'$ are suitable groups that are discussed in more detail below.
  
  If $K$ is a number field, $\Div_\infty^0(K) \cong \R^{\abs{S} - 1}$, the image of $\calO_K^* /
  k^*$ is a lattice of full rank in $\R^{\abs{S} - 1}$ and hence $T$ is an $(\abs{S} -
  1)$-dimensional torus. Moreover, $H = 0$ and $H' = 0$. If $K$ is a function field, then
  $\Div_\infty^0(K) \cong \Z^{\abs{S} - 1}$. If $k$ is finite, then $T$ is finite by an analogue of
  Dirichlet's Unit Theorem \cite[p.~243, Proposition~14.2]{rosen}. In case $k$ is infinite, $T$ can
  be finite or infinite, and both possibilities occur; see \cite[Section~4]{hellegouarchpaysant2}
  for examples with $k = \Q$.  We have $H = 0 = H'$ if and only if $(\deg \frakp \mid \frakp \in S)
  = (\deg \frakp \mid \frakp \in \calP_K)$, as $H \cong (\deg \frakp \mid \frakp \in \calP_K) /
  (\deg \frakp \mid \frakp \in S)$. Here, $(\deg \frakp \mid \frakp \in S)$ is the ideal in $\Z$
  generated by $\{ \deg \frakp \mid \frakp \in S \}$; $(\deg \frakp \mid \frakp \in S)$ is defined
  analogously.
  
  For both number fields and function fields, the rank of $\calO_K^* / k^*$ is called the \emph{unit
  rank} of $K$. In case $K$ is a number field or $T$ is finite, the rank equals $\abs{S} - 1$. Note
  that we assumed $x$ to be fixed in the function field case. If the unit rank equals $n = \abs{S} -
  1$, let $\frakp_1, \dots, \frakp_n \in S$ be $n$ distinct places, and $\varepsilon_1, \dots,
  \varepsilon_n$ a system of fundamental units of $\calO_K$, i.e.\ a set of units whose residue
  classes in $\calO_K^*/k^*$ are a basis of $\calO_K^*/k^*$. Define \[ R := \abs{\det \Bigl(
  \nu_{\frakp_i}(\varepsilon_j) \deg \frakp_i \Bigr)_{1 \le i, j \le n }} \in \R_{\ge 0}; \] this
  number is the \emph{regulator} of $K$ (after fixing $x$ in the function field case) and is
  independent of the choice of the $\frakp_i$\indexplural\ and of the choice of the
  $\varepsilon_j$\indexplural.
  
  \section{One-Dimensional Infrastructures}
  \label{odi-intro}
  
  A one-dimensional infrastructure can be interpreted as a circle with a finite set of points on
  it. This interpretation goes back to Lenstra's work in \cite{lenstra-infrastructure}. See also
  \cite{ff-pohlighellman} for an earlier treatment of (abstract) one-dimensional infrastructures.
  
  \begin{definition}\label{odi-definition}
    A \emph{one-dimensional infrastructure}~$(X, d)$ of \emph{circumference~$R > 0$} is a finite set
    $X \neq \emptyset$ together with an injective map~$d : X \to \R/R\Z$.
  \end{definition}
  
  This can be visualized as follows; see also Figure~\ref{fig:circle}(a). One can interpret $\R/R\Z$
  as a circle of circumference~$R$, with a fixed point $0 \in \R/R\Z$. Then $d(X)$ is a finite set
  of points on this circle, and for every $x \in X$, the residue class $d(x)$ can be interpreted as
  the \emph{distance} of the point $d(x)$ on the circle to $0$.
  
  \begin{figure}[b]
    \begin{center}%
      \begin{tabular}{ccccc}
        \psfrag{0}{\footnotesize$0$}%
        \includegraphics[height=2.5cm]{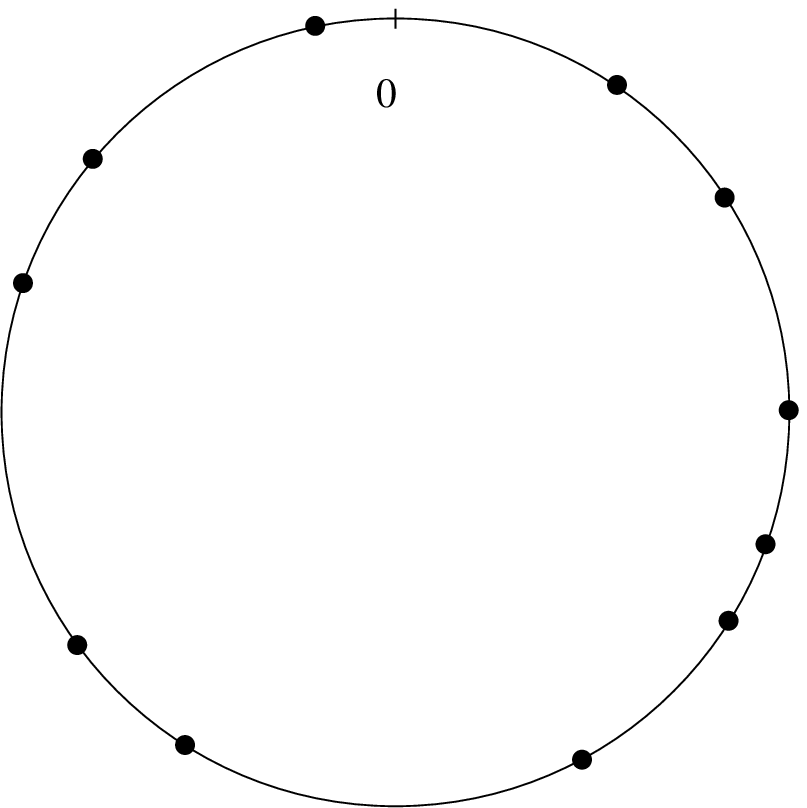}%
      &%
        \psfrag{s}{\footnotesize$s$}\psfrag{t}{\footnotesize$t$}\psfrag{[s,t]}{\footnotesize$[s,t]$}%
        \psfrag{0}{\footnotesize$0$}%
        \includegraphics[height=2.5cm]{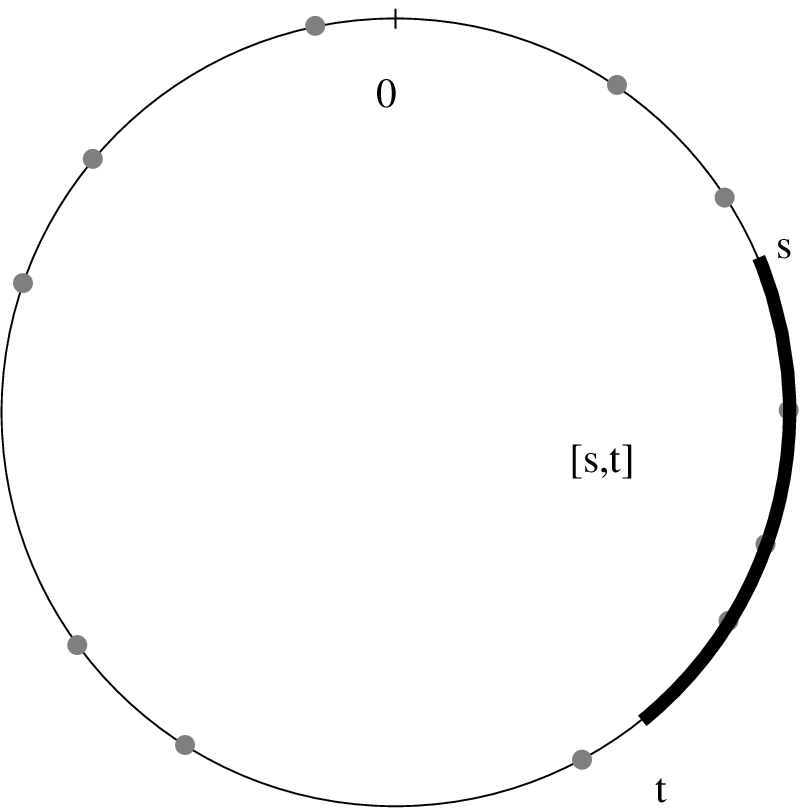}%
      &%
        \psfrag{x}{$x$}\psfrag{bs(x)}{\footnotesize$\bs(x)$}\psfrag{[x,bs(x)]}{\footnotesize$[x,\bs(x)]$}%
        \psfrag{0}{\footnotesize$0$}%
        \includegraphics[height=2.5cm]{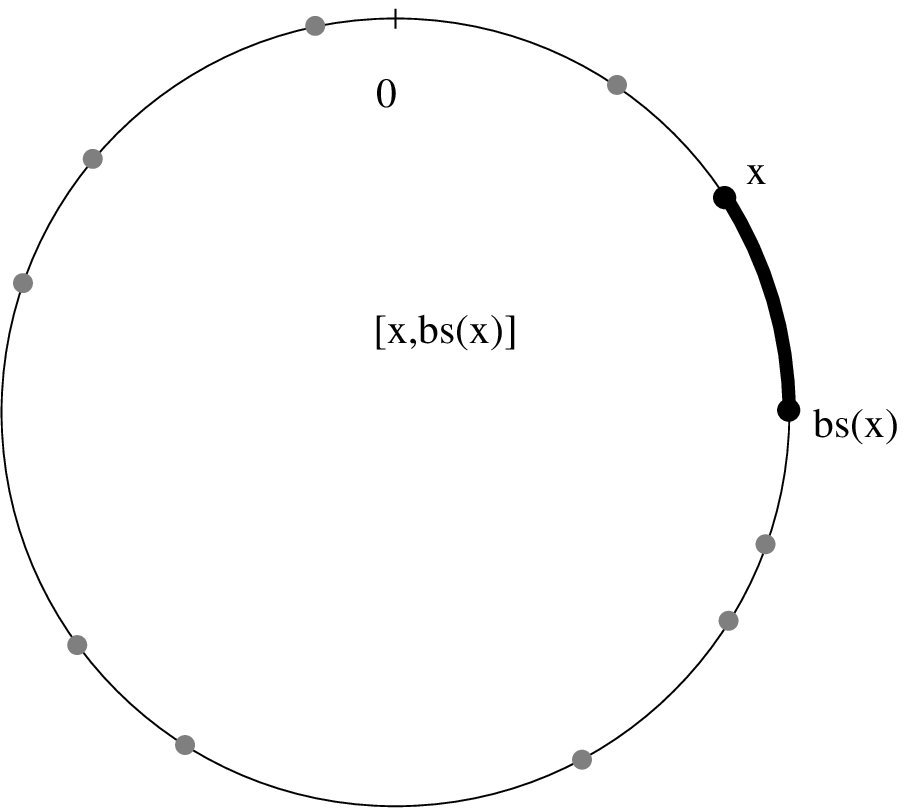}%
      & &%
        \psfrag{x}{\footnotesize$x$}\psfrag{y}{\footnotesize$y$}\psfrag{0}{\footnotesize$0$}%
        \psfrag{d(x)+d(y)}{\footnotesize$d(x){+}d(y)$}\psfrag{gs(x,y)}{\footnotesize$\gs(x,y)$}%
        \includegraphics[height=2.5cm]{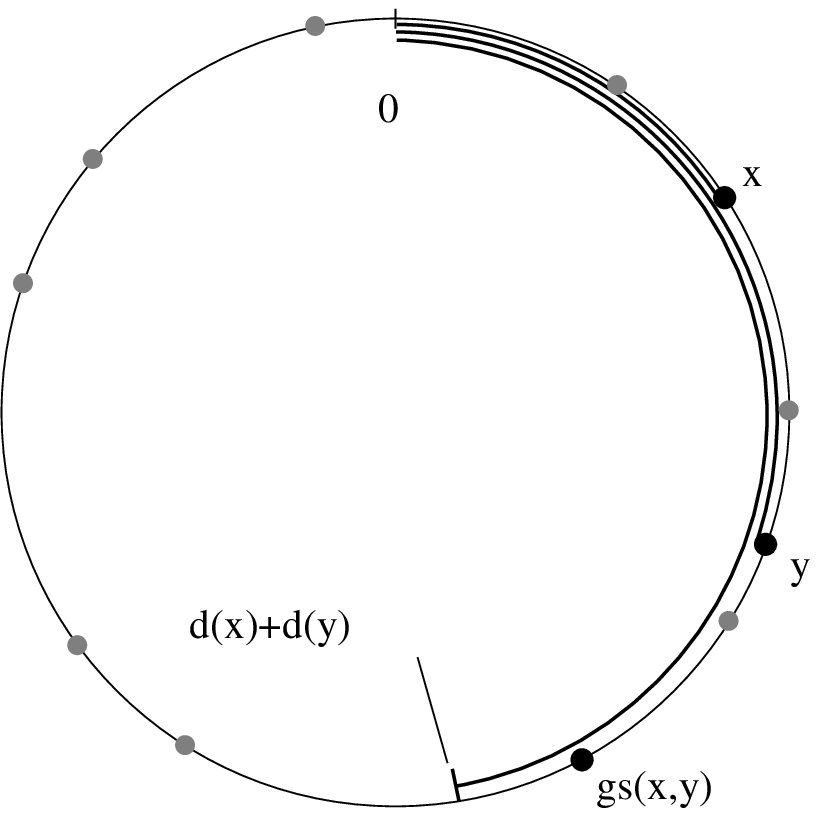}%
      \\ (a) & (b) & (c) & & (d)
      \end{tabular}
      \caption{Illustrating a one-dimensional infrastructure using a circle}%
      \label{fig:circle}
    \end{center}
  \end{figure}
  
  The infrastructure essentially offers two operations:
  \begin{itemize}
    \item \emph{baby steps}: given $x \in X$, the baby step $\bs(x)$ denotes the preimage of the
    element in $d(X)$ on the circle ``following'' $d(x)$;
    \item \emph{giant steps}: given $x, y \in X$, the giant step $\gs(x, y)$ denotes the preimage of
    the element in $d(X)$ on the circle ``before'' $d(x) + d(y)$.
  \end{itemize}
  
  We want to make this more precise. If $\abs{X} = 1$, there is only one way to define $\bs : X \to
  X$ and $\gs : X \times X \to X$ by $\bs(x) = x$, $\gs(x, x) = x$ if $X = \{ x \}$. If $\abs{X} >
  1$, then we can define these two maps as follows.
 
  For $s = a + R\Z$ and $t = b + R\Z$ with $a \le b < a + R$, we denote by $[s, t]$ the set $\{ x +
  R \Z \mid a \le x \le b \}$. If we interpret $\R/R\Z$ as a circle, $[s, t]$ will be the circle
  segment starting at $s$ and ending at $t$ in positive direction. See also
  Figure~\ref{fig:circle}(b).
  
  Then for $x \in X$, we can define $\bs(x)$ as the unique element of $X \setminus \{ x \}$
  satisfying \[ \{ d(x), d(\bs(x)) \} = d(X) \cap [d(x), d(\bs(x))], \] i.e.\ the only two points in
  $d(X)$ lying on the circle segment $[d(x), d(\bs(x))]$ are $d(x)$ and $d(\bs(x))$; see
  Figure~\ref{fig:circle}(c). For $x, y \in X$, we can define $\gs(x, y)$ as the unique element of
  $X$ satisfying \[ \{ d(\gs(x, y)) \} = d(X) \cap [d(\gs(x, y)), d(x) + d(y)], \] i.e.\ the only
  point in $d(X)$ lying on the circle segment $[d(\gs(x, y)), d(x) + d(y)]$ is $d(\gs(x, y))$; see
  Figure~\ref{fig:circle}(d).
  
  The simplest example of one-dimensional infrastructures, which is nevertheless important, is given
  by finite cyclic groups:
  
  \begin{example}
    \label{cyclic-group-example}
    Let $G = \ggen{g}$ be a finite cyclic group of order~$R$. Then we have a canonical
    isomorphism~$\varphi : \Z/R\Z \to G$, $n \mapsto g^n$. Concatenating its inverse with the
    inclusion $\Z/R\Z \subset \R/R\Z$, we obtain an injective map $d : G \to \R/R\Z$, making $(G,
    d)$ a one-dimensional infrastructure. This map is the \emph{discrete logarithm} map with
    base~$g$, i.e.\ it satisfies $g^{d(h)} = h$ for every $h \in G$.
    
    Let $h \in G$ and $d(h) = n + R\Z$. Then for $h' = g^{n'}$ with $n \le n' < n + R$, we have
    $[d(h), d(h')] \cap d(X) = \{ d(g^n), d(g^{n+1}), \dots, d(g^{n'-1}), d(g^{n'}) \}$. This shows
    that if this set contains exactly two elements, then $n' = n + 1$. But this translates to
    $\bs(h) = g h$, so baby steps on $G$ are simply multiplication by the generator $g$ of $G$.
    
    Similarly, if $h = g^n$ and $h' = g^{n'}$, we see that $d(X) \cap [d(g^m), d(h) + d(h')] = \{
    d(g^m), d(g^{m+1}), \dots, d(g^{n + n' - 1}), d(g^{n + n'}) \}$ if $m \le n + n' < m + R$. This
    shows that $\gs(h, h') = g^{n + n'} = h h'$, so giant steps on $G$ amount to group
    multiplication.
  \end{example}
  
  In this paper, we will concentrate on giant steps as they are needed to obtain algorithms of
  square root type, which compute the absolute values of a system of fundamental units in
  $\calO(\sqrt{R})$ infrastructure operations, where $R$ is the regulator of the field.
  
  The giant step is a binary operation on the finite set $X$ which is not necessarily
  associative. For certain applications, such as using the infrastructure in cryptography, one is
  interested in having associative operations: the Diffie-Hellman key exchange protocol depends on
  the fact that $(x^a)^b = (x^b)^a$ for all $a, b \in \N$. More precisely, it is not obvious how to
  define $x^a$ without having an associative operation.
  
  In the infrastructure case, one could define $x^a$ as an element $y \in X$ such that $a \cdot d(x)
  \approx d(y)$. But then it is not necessarily true that $(x^a)^b$ is equal to $(x^b)^a$. One only
  knows that $d((x^a)^b) \approx a \cdot b \cdot d(x) \approx d((x^b)^a)$, but the error here can be
  up to $a$ or $b$ times larger than in $a \cdot d(x) \approx d(y)$. In
  Example~\ref{cyclic-group-example} above, where we start with a finite cyclic group~$G$, this
  error is always $0$ since $G$ is of course associative, and we recover the original Diffie-Hellman
  key exchange protocol, whose security is based on the fact that computing the map $d : G \to
  \R/R\Z$ is hard for random elements of $G$.
  
  Note that while the giant step operation is in general not associative, it is \emph{almost}
  associative: it is so up to a ``small error'', which can be bounded by $d_{\max} := \max\{
  d(\bs(x)) - d(x) \mid x \in X \}$, where we identify $d(\bs(x)) - d(x)$ with the smallest
  non-negative real number lying in the residue class modulo $R$. Namely, we have \[ d(\gs(x, y)) =
  d(x) + d(y) - \varepsilon_{x,y} \qquad \text{with } 0 \le \varepsilon_{x,y} < d_{\max}. \] In
  terms of Figure~\ref{fig:circle}, $d_{\max}$ is the maximal distance between two adjacent points
  on the circle. In Example~\ref{cyclic-group-example} above, we have $d_{\max} = 1$, even though
  the error~$\varepsilon_{x,y}$ will always be zero.
  
  One can ask whether this gap towards an associative operation can be closed. One solution is to
  embed infrastructures into groups. Obviously, $\R/R\Z$ is a group under addition. Unfortunately,
  as seen in Example~\ref{cyclic-group-example}, the embedding $d$ is in general not very helpful,
  since it is often hard to evaluate; in the example, evaluating it is equivalent to compute a
  discrete logarithm, which, depending on the group~$G$, can be a hard problem; see for example
  \cite{hehcc}. We want a group suitable for effective computations, into which $X$ embeds by an
  easily computable embedding. In order to achieve that, we require $f$-representations:
  
  \begin{figure}[t]
    \begin{center}%
        \psfrag{x}{\footnotesize$x$}\psfrag{t}{\footnotesize$t$}\psfrag{s}{\footnotesize$s$}%
        \includegraphics[height=2.5cm]{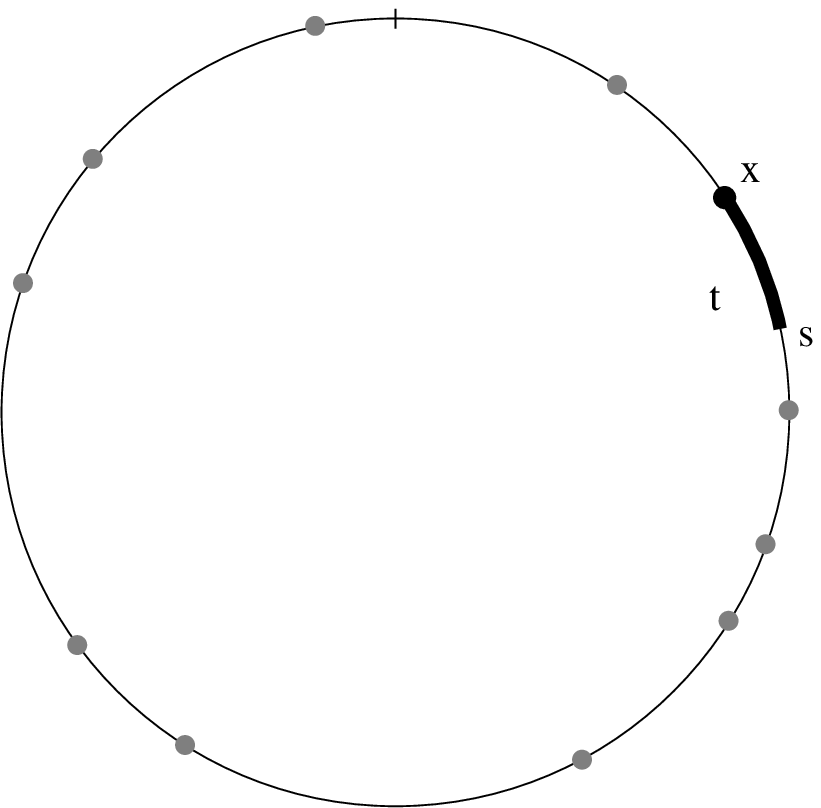}%
      \quad\;%
        \psfrag{x}{\footnotesize$x$}\psfrag{t}{\footnotesize$t$}\psfrag{s}{\footnotesize$s$}%
        \includegraphics[height=2.5cm]{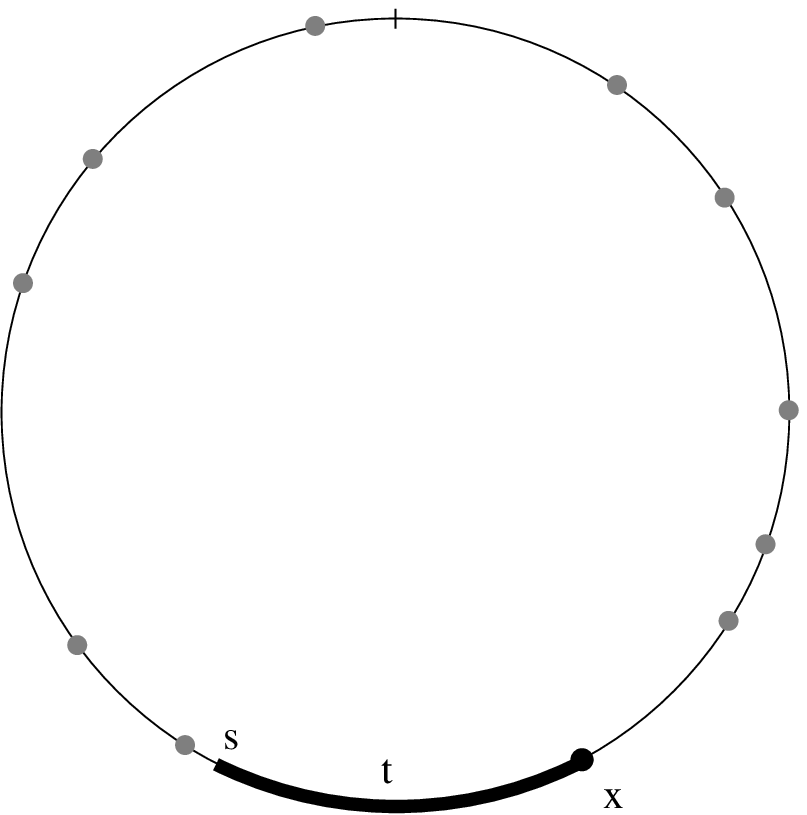}%
      \quad\;%
        \psfrag{x}{\footnotesize$x$}\psfrag{t}{\footnotesize$t$}\psfrag{s}{\footnotesize$s$}%
        \includegraphics[height=2.5cm]{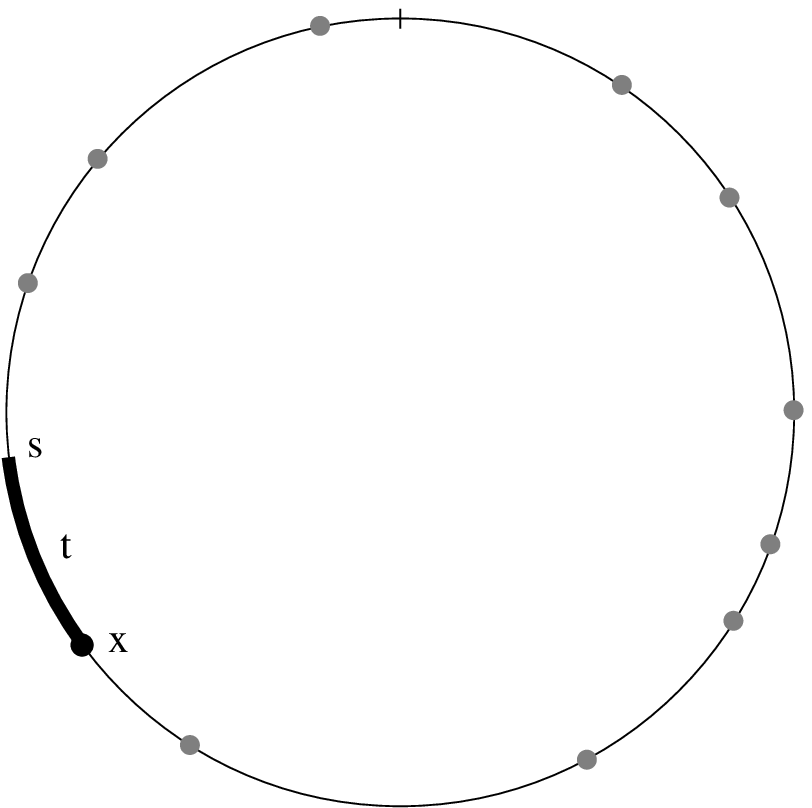}%
      \quad\;%
        \psfrag{x}{\footnotesize$x$}\psfrag{t}{\footnotesize$t$}\psfrag{s}{\footnotesize$s$}%
        \includegraphics[height=2.5cm]{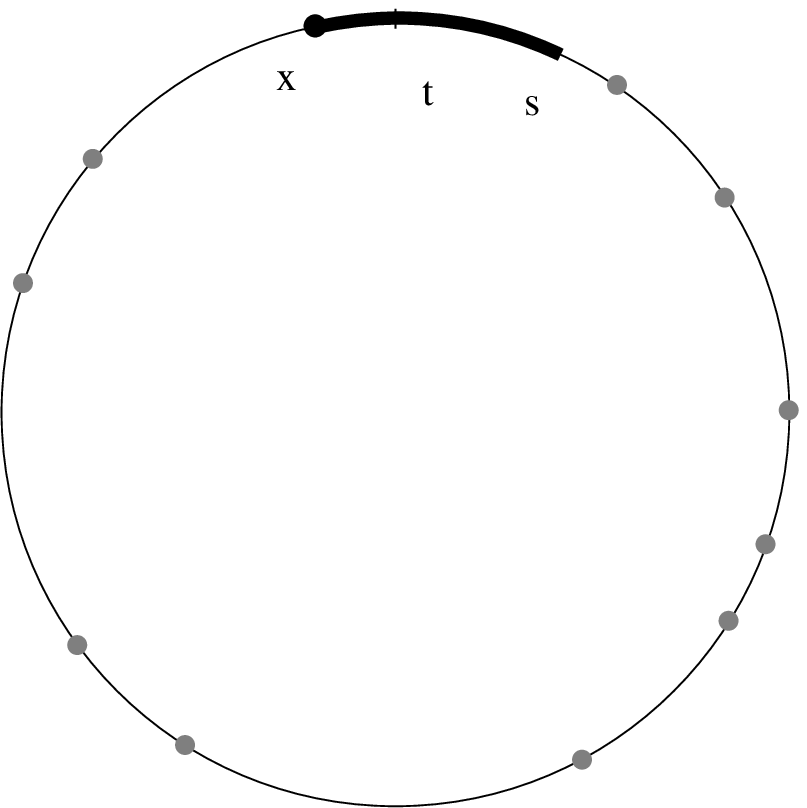}%
      \caption{Illustrating $f$-representations in a one-dimensional infrastructure}%
      \label{fig:circle-frep}
    \end{center}
  \end{figure}
  
  \begin{definition}
    \label{onedimfRepDef}
    An element $(x, t) \in X \times \R$ is called an \emph{$f$-representation} if $0 \le t < R$ and
    \[ \{ d(x) \} = [d(x), d(x) + t] \cap d(X). \] Denote the set of all $f$-representations by
    $\fRep(X, d)$.
  \end{definition}
  
  The $f$-representation $(x, t)$ \emph{represents} the element $s := d(x) + t \in \R/R\Z$. The
  condition on $t$ implies that $t$ is minimal for such a representation: it is the smallest
  distance from the point $s = d(x) + t$ on the circle backwards to a point in $d(X)$ (namely
  $d(x)$). In other words, $t$ is small enough that no image under $d$ of any element in $X
  \setminus \{ x \}$ lies in the circle segment $[d(x), d(x) + t]$. The simplest $f$-representations
  are the one of the form $(x, 0)$, where $x \in X$; this shows that we can embed $X$ into $\fRep(X,
  d)$ by $x \mapsto (x, 0)$.
  
  In Example~\ref{cyclic-group-example}, we have $\fRep(G, d) = G \times [0, 1)$. Moreover, the
  $f$-representations of various elements of the example in Figure~\ref{fig:circle} are shown in
  Figure~\ref{fig:circle-frep}.
  
  We obtain the following result which shows that the set of $f$-representations can be identified
  with the group $(\R/R\Z, +)$.
  
  \begin{proposition}
    \label{onedimfRepBijection}
    The map \[ \hat{d} : \fRep(X, d) \to \R/R\Z, \qquad (x, t) \mapsto d(x) + t \] is a
    bijection. \qed
  \end{proposition}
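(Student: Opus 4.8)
The plan is to produce an explicit two-sided inverse of $\hat d$, i.e.\ to show that every $s \in \R/R\Z$ is represented by exactly one $f$-representation. The guiding observation is that an $f$-representation $(x,t)$ of $s$ is forced to use the point of $d(X)$ lying ``closest to $s$ in the backward (negative) direction''. To make this precise, for $s \in \R/R\Z$ and $x \in X$ I would let $\tau(s,x)$ denote the unique real number in $[0,R)$ whose residue class modulo $R$ is $s - d(x)$. Since $d$ is injective, the map $x \mapsto \tau(s,x)$ is injective on the finite set $X$, hence attains its minimum at a unique point; this is what supplies every uniqueness statement below. Note also that if $(x,t) \in \fRep(X,d)$ with $\hat d(x,t) = s$, then necessarily $t = \tau(s,x)$, because $0 \le t < R$.

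For surjectivity, given $s \in \R/R\Z$, I would choose $x_0 \in X$ minimizing $\tau(s,\cdot)$ and set $t_0 := \tau(s,x_0)$; then $\hat d(x_0,t_0) = s$ automatically, so the task is to verify $(x_0,t_0) \in \fRep(X,d)$, i.e.\ that $d(x_0)$ is the only element of $d(X)$ in the closed circle segment $[d(x_0), d(x_0)+t_0] = [d(x_0), s]$. Writing $a$ for the representative of $d(x_0)$ with $s = (a+t_0)+R\Z$, suppose $d(x') \in [d(x_0),s]$ with $d(x') \neq d(x_0)$, and pick its representative $c \in [a, a+t_0]$. Then $c = a$ is impossible by injectivity of $d$; the case $c = a+t_0$ forces $d(x') = s$, hence $\tau(s,x') = 0 \le t_0$, so $x'$ is also a minimizer and $x' = x_0$; and $a < c < a+t_0$ gives $\tau(s,x') = (a+t_0)-c \in (0,t_0)$, contradicting minimality of $t_0$. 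Hence $(x_0,t_0)$ is an $f$-representation of $s$.

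For injectivity, suppose $(x,t),(x',t') \in \fRep(X,d)$ both map to $s$ under $\hat d$; I claim $x = x'$, which suffices since then $t = \tau(s,x) = \tau(s,x') = t'$. After possibly swapping the two $f$-representations, assume $t \le t'$. Writing $d(x') = b + R\Z$ with $s = (b+t')+R\Z$, the element $d(x) = s - t$ has the representative $b + t' - t$, which lies in $[b, b+t']$ precisely because $0 \le t \le t'$; thus $d(x) \in [d(x'), d(x')+t']$. Since $(x',t')$ is an $f$-representation, that segment meets $d(X)$ only in $d(x')$, forcing $d(x) = d(x')$ and hence $x = x'$ by injectivity of $d$.

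I do not expect a serious obstacle here. The one point that will demand genuine care is the bookkeeping with the closed segments $[\cdot,\cdot]$: the fact that they contain both endpoints is exactly what makes the degenerate cases ($s \in d(X)$, or $t_0 = 0$, or $\abs{X} = 1$) fall out of the general argument without separate treatment, and the injectivity of $d$ is invoked repeatedly to pin down the unique backward-closest point of $d(X)$ to a given $s$.
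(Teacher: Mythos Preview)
Your argument is correct. The paper itself gives no proof of this proposition at all --- it is stated with an immediate \qed, the author evidently regarding it as clear from the definition of $\fRep(X,d)$ and the discussion surrounding it. Your write-up therefore supplies the details the paper omits: the introduction of $\tau(s,x)$ and the characterization of the unique $f$-representation of $s$ via the minimizer of $\tau(s,\cdot)$ is exactly the ``backward-closest point'' picture the paper describes informally (and later turns into the reduction map $\reduce_{(X,d)}$), so your approach is the natural one and matches the paper's intended reasoning.
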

  
  This allows us to pull the group operation of $\R/R\Z$ back to the set $\fRep(X, d)$, giving an
  operation $+$ on $\fRep(X, d)$ by $(x, t) + (x', t') := \hat{d}^{-1}(\hat{d}(x, t) + \hat{d}(x',
  t'))$. The following remark describes an algorithm which computes the group operation on $\fRep(X,
  d)$ using baby and giant steps.
  
  \begin{remark}
    \label{frepOneDimArithmetic}
    For $(x, t), (x', t') \in \fRep(X, d)$, consider \[ (x'', t'') := (\gs(x, x'), t + t' + (d(x) +
    d(x') - d(\gs(x, x')))) \in X \times \R; \] this ensures that $d(x'') + t'' = \hat{d}(x, t) +
    \hat{d}(x', t')$. In general, $(x'', t'') \not\in \fRep(X, d)$, but $t'' \ge 0$ is not too big;
    more precisely, $t'' < 3 d_{\max}$. The idea of the algorithm for realizing the group operation
    on $\fRep(X, d)$ is to decrease $t''$ using baby steps, while preserving the invariant $d(x'') +
    t'' = \hat{d}(x, t) + \hat{d}(x', t')$, until $(x'', t'') \in \fRep(X, d)$.
    
    For that, note that for $t'' \ge 0$, we have $(x'', t'') \in \fRep(X, d)$ if and only if $t'' <
    d(\bs(x'')) - d(x'')$, i.e.\ if $t''$ is smaller than the distance from $x$ to $\bs(x)$. Hence,
    we iteratively replace $(x'', t'')$ by $(\bs(x''), t'' - (d(\bs(x'')) - d(x'')))$ as long as
    $t'' \ge 0$ is satisfied.

    The smallest non-negative $t''$ yields $(x'', t'') \in \fRep(X, d)$ with $\hat{d}(x'', t'') =
    \hat{d}(x, t) + \hat{d}(x', t')$, and therefore $(x'', t'')$ is the sum of $(x, t)$ and $(x',
    t')$ in $\fRep(X, d)$.
    
    Finally, if we define $d_{\min} := \min\{ d(\bs(x)) - d(x) \mid x \in X \}$, we see that this
    process requires at most $\frac{3 d_{\max}}{d_{\min}}$ baby step computations and one giant step
    computation.
  \end{remark}
  
  The algorithm first uses giant steps to compute a pair~$(x'', t'') \in X \times \R$ with $d(x'') +
  t'' = \hat{d}(x, t) + \hat{d}(x', t')$, where $t''$ is ``small'', and then ``reduces'' $(x'',
  t'')$ to an element of $\fRep(X, d)$. To make this more precise, we need to introduce a
  \emph{reduction map}~$\reduce_{(X, d)} : \R/R\Z \to X$. For a point~$s$ on the circle $\R/R\Z$, we
  want $\reduce_{(X, d)}(s)$ to be the preimage of the element in $d(X)$ ``before'' $s$. More
  precisely, we want $\reduce_{(X, d)}(s)$ to be the unique element in $X$ such that \[ \{
  d(\reduce_{(X,d)}(s)) \} = d(X) \cap [d(\reduce_{(X,d)}(s)), s], \] i.e.\ $d(\reduce_{(X,d)}(s))$
  is the only point in $d(X)$ lying on the circle segment $[d(\reduce_{(X,d)}(s))$, $s]$. Hence,
  $\reduce_{(X,d)}$ assigns to each $s \in \R/R\Z$ some $x \in X$ such that $d(x) \approx s$, and
  satisfies $\reduce_{(X,d)}(d(x)) = x$. The algorithm in Remark~\ref{frepOneDimArithmetic} computes
  $\reduce(d(x'') + t'')$ for $t'' \ge 0$; one can easily adjust it to work for $t'' < 0$ as well.
  
  If one compares the definition of $\reduce_{(X,d)}$ to Figure~\ref{fig:circle-frep}, one quickly
  sees that if $(x, t) \in \fRep(X, d)$ represents $s$, i.e.\ if $d(x) + t = s$, then
  $\reduce_{(X,d)}(s) = x$. Hence, if $\pi : X \times \R \to X$ denotes the projection onto the
  first component, we see that \[ \reduce_{(X,d)}(s) = \pi_1(\hat{d}^{-1}(s)). \] In the context of
  Example~\ref{cyclic-group-example}, where we obtained a one-dimensional infrastructure~$(G, d)$
  from a finite cyclic group~$G = \ggen{g}$, we see that $\reduce(s + R\Z) = g^{\lfloor s \rfloor}$
  for $s \in \R$. This directly follows from the fact that $\fRep(G, d) = G \times [0, 1)$.
  
  Moreover, one can see that the reduction map $\reduce_{(X,d)}$ can be used to define $\fRep(X, d)$
  and giant steps, as 
  \begin{align*}
    \fRep(X, d) ={} & \{ (x, t) \in X \times \R \mid \reduce_{(X,d)}(d(x) + t) = x \} \\ \text{and}
    \qquad\quad \gs(x, y) ={} & \reduce_{(X,d)}(d(x) + d(y)) \qquad \text{for all } x, y \in X.
  \end{align*}
  It is obvious that our choice of $\reduce_{(X,d)}$ is not the only one possible. One could choose
  $\reduce_{(X,d)}$ such that $d(\reduce_{(X,d)}(s))$ is closest to $s$, with a rule to break ties;
  such a reduction map is for example used in \cite{galbraithmirelesmorales-balanced} in the case of
  infrastructures obtained from real quadratic function fields. The advantage of such a reduction
  map is that it reduces the number of baby steps in Remark~\ref{frepOneDimArithmetic} to at most
  $\frac{3 d_{\max}}{2 d_{\min}}$. Using a different reduction map would result in different
  $f$-representations and possibly also different giant steps. We will investigate this relationship
  between reduction maps and $f$-representations in more detail in the next section.
  
  An interesting question is where and how infrastructures occur in practice. The first known
  non-associative instance was the infrastructure of a real quadratic number field, which was
  discovered in 1972 by Shanks. It was originally described in terms of binary quadratic forms, but
  an alternative and more accessible description uses ideals; see, for example,
  \cite{williams-contfract-numbtheor-compus}. We will use the language of ideals since it is
  available in all number fields and function fields. See Section~\ref{reducedideals} on how the
  infrastructure can be realized in detail; for the moment, we want to give a simpler example: the
  infrastructure of a real quadratic number field.
  
  \begin{example}[compare \cite{williams-contfract-numbtheor-compus}]
    \label{realquadratic-example}
    Let $K = \Q(\sqrt{D})$ be a real quadratic number field, where $D > 1$ is a squarefree
    integer. Note that there are two embeddings $K \to \R$, one is the identity, and the other one
    maps $\sqrt{D}$ to $-\sqrt{D}$. Denote the first embedding by $\sigma_1$ and the second one by
    $\sigma_2$; then we have $S = \{ \frakp_1, \frakp_2 \}$ with $\abs{h}_{\frakp_i} =
    \abs{\sigma_i(h)}$ for $h \in K$.
    
    We say that a fractional ideal~$\fraka \in \Id(\calO_K)$ is \emph{reduced} if $1 \in \fraka$,
    and for every $\mu \in \fraka$ satisfying $\abs{\mu}_1 \le 1$ and $\abs{\mu}_2 \le 1$ we have
    $\mu \in \{ -1, 0, 1 \}$. Using the Minkowski embedding $\Phi : K \to K \otimes_\Q \R \cong
    \R^2$, given by $h \mapsto (\sigma_1(h), \sigma_2(h))$, we can visualize $\fraka$ as a
    lattice~$\Phi(\fraka)$ of rank~two in $\R^2$. The condition that $\fraka$ is reduced is
    equivalent to the property that the square $[-1, 1]^2$ contains exactly the three points $(-1,
    -1)$, $(0, 0)$ and $(1, 1)$ of $\Phi(\fraka)$. The unit ideal~$\calO_K$ is always reduced. See
    Figure~\ref{realquaraticfig} for an example.
    
    \begin{figure}[t]
      \newgray{reallylightgray}{0.85}
      \newcommand{\tinyfontsize}{\tiny\darkgray}
      \psset{unit=0.8cm}
      \begin{center}
        \begin{pspicture}(-5,-2)(5,2.25)
          \psline[linecolor=darkgray,linewidth=0.5pt,arrows=->,arrowsize=3pt 3,arrowinset=0](-4,0)(4,0)
          \psline[linecolor=darkgray,linewidth=0.5pt,arrows=->,arrowsize=3pt 3,arrowinset=0](0,-2)(0,2)
          \rput[b](0,2.05){\tiny $\sigma_2$}
          \rput[l](4.05,0){\tiny $\sigma_1$}
          
          \psdot(3.828,-1.828)
          \rput[bl](3.928,-1.828){\tinyfontsize $1 + 2 \sqrt{2}$}
          
          \psframe[linecolor=darkgray,fillstyle=solid,fillcolor=reallylightgray](-1,-1)(1,1)
          \psline[linecolor=darkgray,linewidth=0.5pt,linestyle=dashed](-1,0)(1,0)
          \psline[linecolor=darkgray,linewidth=0.5pt,linestyle=dashed](0,-1)(0,1)
          
          \psdot(1.414,-1.414)
          \psdot(2.414,-0.414)
          \psdot(3.414,0.586)
          \rput[bl](1.514,-1.414){\tinyfontsize $\sqrt{2}$}
          \rput[bl](2.514,-0.414){\tinyfontsize $1 + \sqrt{2}$}
          \rput[bl](3.514,0.586){\tinyfontsize $2 + \sqrt{2}$}
          
          \psdot(-2,-2)
          \psdot(-1,-1)
          \psdot(0,0)
          \psdot(1,1)
          \psdot(2,2)
          \rput[br](-2.1,-2){\tinyfontsize $-2$}
          \rput[br](-1.1,-1){\tinyfontsize $-1$}
          \rput[lb](0.1,0.1){\tinyfontsize $0$}
          \rput[tl](1.1,1){\tinyfontsize $1$}
          \rput[tl](2.1,2){\tinyfontsize $2$}
          
          \psdot(-3.414,-0.568)
          \psdot(-2.414,0.414)
          \psdot(-1.414,1.414)
          \rput[tr](-3.514,-0.568){\tinyfontsize $-2-\sqrt{2}$}
          \rput[tr](-2.514,0.414){\tinyfontsize $-1-\sqrt{2}$}
          \rput[tr](-1.514,1.414){\tinyfontsize $-\sqrt{2}$}
          
          \psdot(-3.828,1.828)
          \rput[tr](-3.928,1.828){\tinyfontsize $-1 - 2 \sqrt{2}$}
        \end{pspicture}
      \end{center}
      \caption{The Minkowski embedding $(\sigma_1, \sigma_2)$ of $\calO_K$ in the real quadratic
      number field $K = \Q(\sqrt{2})$. The grey square is $[-1, 1]^2$.}
      \label{realquaraticfig}
    \end{figure}
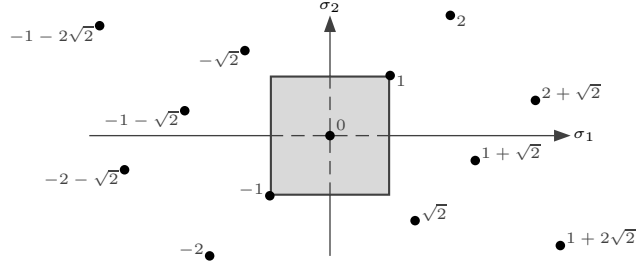
    
    Let $\varepsilon \in \calO_K$ be the fundamental unit with $\varepsilon > 1$. We have $\calO_K^*
    = \{ \pm \varepsilon^n \mid n \in \Z \}$. Set $R := \log \varepsilon$; then $R$ is the regulator
    of $K$. If $\fraka = \frac{1}{\mu} \calO_K$ is a reduced fractional ideal, the elements in $\mu
    \calO_K^*$ are exactly the elements~$\mu'$ such that $\fraka = \frac{1}{\mu'} \calO_K$, whence
    $\{ -\log \abs{\mu'} \mid \fraka = \frac{1}{\mu'} \calO_K \} = -\log \abs{\mu} + R \Z$. Define
    $d(\frac{1}{\mu} \calO_K) := -\log \abs{\mu} + R \Z$; then $d$ is a map from the set~$X$ of
    reduced principal ideals to $\R/R\Z$. One can show that $X$ is finite and that $d$ is
    injective.\footnote{\label{RQFootnote}This is a special case of
    Proposition~\ref{distanceInjectivityProp} with $X = \Red(\calO_K)$ and $d = d^{\calO_K}$, when
    we identify an equivalence class $[\fraka]_\sim$ with $\fraka$, since by
    Corollary~\ref{idealcomparecorr} every class contains exactly one ideal. In this special case,
    at least the injectivity of $d$ is rather obvious, since $h \mapsto \log \abs{h}$ is a group
    homomorphism $(K^*, \cdot) \to (\R, +)$ with kernel $\{ -1, 1 \} = k^* \subseteq \calO_K^*$.}
    Then $(X, d)$ is a one-dimensional infrastructure: in fact, this is the infrastructure used by
    Shanks in \cite{shanks-infra}, translated into the language of ideals.
    
    Computation of baby steps and giant steps is done by continued fraction expansion. Let $\fraka$
    be a principal fractional ideal with $\fraka \cap \Q = \Z$; any reduced ideal satisfies this. We
    can then write $\fraka = \Z \oplus \phi \Z$ with $\phi = (P + \sqrt{D})/Q$, and compute the
    continued fraction expansion of $\phi = \phi_0$. If $\phi_i$ is the $i$-th complete quotient, we
    can write $\phi_i = (P_i + \sqrt{D})/Q_i$ with $P_i, Q_i \in \Z$, and it turns out that
    $\fraka_i := \Z \oplus \phi_i \Z$ is a principal fractional ideal. There exists some $i_0 \in
    \N$, depending on $\fraka$, such that for all $i \ge i_0$, $\fraka_i$ is reduced. In fact, $\{
    \fraka_i \mid i \ge i_0 \}$ is the set of \emph{all} reduced principal ideals~$X$. If $\fraka_i$
    is reduced, $\bs(\fraka_i) = \fraka_{i+1}$. Moreover, if one defines $\reduce(\fraka) =
    \fraka_n$ if $n \ge 0$ is chosen minimal under the condition that $\fraka_n$ is reduced, then
    $\gs(\fraka_i, \fraka_j) = \reduce(\fraka_i \fraka_j)$ is the giant step operation used by
    Shanks in \cite{shanks-infra}.
    
    One can use this to define a reduction map on a dense subset of $\R/R\Z$. For that, note that
    the map $\Psi : \PId(K) \to \R/R\Z$, $\frac{1}{\mu} \calO_K \mapsto -\log \abs{\mu} + R\Z$ is
    injective as argued in Footnote~\ref{RQFootnote}. The set $\Psi(\PId(K))$ is a dense subgroup of
    $\R/R\Z$, and if $s \in \R/R\Z$ lies in the image of $\Psi$, one can define $\reduce(s) :=
    \reduce(\Psi^{-1}(s))$. This was in fact done by Lenstra in \cite{lenstra-infrastructure}, and
    Lenstra called the image of $\Psi$ a ``circular group'' since it is a dense subset of the
    circle~$\R/R\Z$. (Note that Lenstra uses a distance map that is different from the one
    introduced by Shanks.)
  \end{example}
  
  \section{$n$-Dimensional Infrastructures}
  \label{ndiminfra}
  
  In this section, we want to define an abstract $n$-dimensional infrastructure. We want this
  definition to share most properties of one-dimensional infrastructures. Unfortunately, it is not
  as clear as in the one-dimensional case how baby steps, giant steps and $f$-representations can be
  defined. We will see that as outlined in the discussion of the one-dimensional case in the
  previous section, $f$-representations and reduction maps are equivalent, and both yield giant
  steps. As a consequence of the additional freedom gained in the $n$-dimensional case, we are
  forced to include more information on the infrastructure in the definition, namely a reduction
  map.
  
  Note that we will ignore baby steps for the rest of this paper. For $n$-dimensional
  infrastructures obtained from global fields one can define $n + 1$ baby step functions, see for
  example \cite{genvoronoiIuII}, \cite{lee-scheidler-yarrish-cyclic} or
  \cite[Section~3.5]{fontein-diss}. These definitions come from the relation of the infrastructure
  to the set of minima of an ideal, but there is no reason an abstract $n$-dimensional
  infrastructure arises from such a structure. Moreover, such baby steps do not always behave as
  expected, as it might happen that certain minima cannot be reached by baby steps. So far, it is
  unknown whether there is a usable definition of baby steps for abstract $n$-dimensional
  infrastructures when $n > 1$.
  
  We want to make the definition on an $n$-dimensional infrastructure slightly more general by
  allowing to restrict to a suitable subgroup~$\G$ of $\R$. For example, for infrastructures
  obtained from function fields, the natural subgroup to restrict to is $\Z$, since all valuations
  of a function field are discrete. In case of Example~\ref{realquadratic-example}, one could
  restrict to the subgroup~$\{ \log \abs{\mu} \mid \mu \in K^* \}$; then the function $\reduce$ from
  the example will no longer be partially defined, and one essentially obtains the group (though not
  the distance function) of Lenstra \cite{lenstra-infrastructure}.
  
  Throughout this section, fix a suitable non-zero subgroup $\G$ of $\R$. The similarity to
  Section~\ref{odi-intro} is clearer if one assumes $\G = \R$. In the following sections, we will
  restrict to $\G = \Z$ in the function field case and $\G = \R$ in the number field case.
  
  The natural analogue to a circle~$\R/R\Z$ in $n$~dimensions is an $n$-dimensional
  \emph{torus}~$\R^n / \Lambda$, where $\Lambda$ is a lattice of full rank. Since we want to
  restrict to $\G$, we assume that $\Lambda \subseteq \G^n$. Moreover, we abuse terminology by
  calling $\G^n / \Lambda$ a torus, even though we can in general only embed it canonically into the
  torus~$\R^n / \Lambda$. Note that both the circle~$\R/R\Z$ and the torus~$\G^n/\Lambda$ have a
  fixed point~$0$.
  
  A natural generalization of a one-dimensional infrastructure would be a finite set $X \neq
  \emptyset$ together with an injective map $d : X \to \G^n / \Lambda$. Unfortunately, the situation
  is not as simple as in the one-dimensional case. The problem lies in the definition of
  $f$-representations and the giant step function, not to mention the baby step function(s). In the
  one-dimensional case, one has essentially two directions on the circle: one can go clockwise and
  counterclockwise. In fact, our circle $\R/R\Z$ has a distinguished direction corresponding to the
  positive direction on the real line. This allows us to define baby steps as going ``forward'', and
  we can define giant steps, $f$-representations and the reduction map by taking an element of
  $d(X)$ ``before'' a point on the circle.
  
  As soon as $n > 1$, the torus $\G^n/\Lambda$ has infinitely many directions, none of them more
  distinguished than others. This gives many more choices for giant steps, $f$-representations and
  reduction maps, not to mention baby steps. We will be forced to include a particular choice in the
  definition of an $n$-dimensional infrastructure. Before we do that, let us formalize the notions
  of reduction maps and $f$-representations and discuss their relationship.
  
  Let $X$ be a finite set and $d : X \to \G^n / \Lambda$ be a injective map.
  
  \begin{definition}\label{reductionmap-frep-definition}\hfill
    \begin{enuma}
      \item a \emph{reduction map} (for $(X, d)$) is a map~$\reduce : \G^n / \Lambda \to X$ satisfying
      $\reduce(d(x)) = x$ for every $x \in X$;
      \item \emph{$f$-representations} (for $(X, d)$) are a subset $\fRep \subseteq X \times \G^n$
      satisfying $X \times \{ 0 \} \subseteq \fRep$ such that the map \[ \Phi : \fRep \to \G^n /
      \Lambda, \qquad (x, t) \mapsto d(x) + t \] is a bijection.
    \end{enuma}
  \end{definition}
  
  If $(X, d)$ is a one-dimensional infrastructure, then the definition of $\reduce_{(X, d)}$ as in
  Section~\ref{odi-intro} yields a reduction map in the sense of (a), and
  Definition~\ref{onedimfRepDef} and Proposition~\ref{onedimfRepBijection} yield $f$-representations
  in the sense of (b).
  
  Note that the condition $\reduce(d(x)) = x$ for reduction maps ensures that the only fixed points
  under the map $d \circ \reduce : \G^n / \Lambda \to \G^n / \Lambda$ are the elements in $d(X)$;
  i.e.\ these elements can be interpreted as \emph{reduced} elements: other elements of $\G^n /
  \Lambda$ will be mapped to a reduced element when applying $\reduce$, while reduced elements are
  left unchanged under this map.
  
  We begin by outlining the relationship between reduction maps and $f$-re\-pre\-sen\-ta\-tions in
  the sense of Definition~\ref{reductionmap-frep-definition}. This is analogous to the relationship
  in the one-dimensional case in Section~\ref{odi-intro}.
  
  If $\reduce$ is a reduction map, then we obtain a set of $f$-representations by \[ \fRep := \{ (x,
  t) \in X \times \G^n \mid \reduce(d(x) + t) = x \}. \] Here, we choose pairs~$(x, t)$ such that
  $d(x) + t \in \G^n / \Lambda$ will reduce to $x$. If $\reduce$ satisfies $d(\reduce(s)) \approx s$
  for all $s \in \G^n / \Lambda$, then the permissible $t$ values in the $f$-representations will be
  ``small''. Moreover, the condition $\reduce(d(x) + t) = x$ ensures that there is a \emph{unique}
  $f$-representation $(x, t)$ for every $s \in \G^n / \Lambda$.
  
  Conversely, if $\fRep$ is a set of $f$-representations with induced bijection $\Phi : \fRep \to
  \G^n/\Lambda$, then we get a reduction map by \[ \reduce : \G^n / \Lambda \to X, \qquad s \mapsto
  \pi_1(\Phi^{-1}(s)), \] where $\pi_1 : X \times \G^n \to X$ is the projection onto the first
  component. This is the direct generalization of the map $\reduce_{(X, d)}$ in the one-dimensional
  case: given a point $s$ on the torus $\G^n / \Lambda$, we consider the $f$-representation $(x, t)
  = \Phi^{-1}(s)$ representing that point, and return $x = \pi_1(x, t)$.
  
  Therefore, as in the one-dimensional case, the concepts of reduction maps and of
  $f$-representations are equivalent. We can continue as in the one-dimensional case and define
  giant steps using these two notions. If $\reduce : \G^n / \Lambda \to X$ is a reduction map, we
  define \[ \gs(x, y) := \reduce(d(x) + d(y)) \] for all $x, y \in X$; if $\fRep$ are
  $f$-representations with induced bijection $\Phi : \fRep \to \G^n/\Lambda$, we define \[ \gs(x, y)
  := \pi_1(\Phi^{-1}(\Phi(x, 0) + \Phi(y, 0))) \] for all $x, y \in X$. Both definitions yield the
  same giant step operation on $X$.
  
  This discussion gives rise to the following definition of an abstract $n$-dimensional
  infrastructure:
  
  \begin{definition}
    \label{ndiminfradef}
    Let $\Lambda \subseteq \G^n$ be a lattice of full rank.
    \begin{enuma}
      \item An \emph{$n$-dimensional infrastructure} is a triple~$(X, d, \reduce)$, where $X \neq
      \emptyset$ is a non-empty finite set, $d : X \to \G^n / \Lambda$ an injective map and $\reduce
      : \G^n / \Lambda \to X$ a reduction map for $(X, d)$.
      \item If $(X, d, \reduce)$ is an $n$-dimensional infrastructure, then set
      \begin{align*}
        \fRep(X, d, \reduce) :={} & \{ (x, t) \in X \times \G^n \mid \reduce(d(x) + t) = x \} \\
        \text{and} \qquad\qquad\quad \gs(x, x') :={} & \reduce(d(x) + d(y)) \qquad \text{for } x, x'
        \in X.
      \end{align*}
    \end{enuma}
  \end{definition}
  
  Since $R\Z$ is a lattice in $\R^1$ of full rank, we see that a one-dimensional infrastructure~$(X,
  d)$ in the sense of Definition~\ref{odi-definition} is a $1$-dimensional infrastructure~$(X, d,
  \reduce_{(X,d)})$ in the sense of Definition~\ref{ndiminfradef}, whose giant steps and
  $f$-re\-pre\-sen\-ta\-tions coincide. This shows that our new definition is indeed a
  generalization of the notion of a one-dimensional infrastructure as in Section~\ref{odi-intro} or
  \cite{ff-pohlighellman}.
  
  We conclude this section with an example, which shows that $n$-dimensional infrastructures can be
  seen as a generalization of finite abelian groups. Recall that Example~\ref{cyclic-group-example}
  showed how a finite cyclic group can be interpreted as a one-dimensional infrastructure, where the
  distance map was essentially the discrete logarithm map.
  
  \begin{example}
    \label{abelian-group-example}
    Assume that $\Z \subseteq \G$. Let $G = \ggen{g_1, \dots, g_n}$ be a finite abelian group, and
    let \[ \Lambda := \biggl\{ (e_1, \dots, e_n) \in \Z^n \;\biggm|\; \prod_{i=1}^n g_i^{e_i} = 1
    \biggr\} \] be the relation lattice of $g_1, \dots, g_n$; this is the kernel of the epimorphism
    $\Z^n \to G$, $(e_1, \dots, e_n) \mapsto \prod_{i=1}^n g_i^{e_i}$, whence $G \cong \Z^n /
    \Lambda$.
    
    Concatenating the inverse of this isomorphism with the inclusion $\Z^n / \Lambda \subseteq \G^n
    / \Lambda$, we obtain an injective map $d : G \to \G^n / \Lambda$. This map is the
    \emph{generalized discrete logarithm} map with base $g := (g_1, \dots, g_n)$, i.e.\ it
    satisfies\footnote{For $g = (g_1, \dots, g_n) \in G^n$ and $v = (v_1, \dots, v_n) \in \Z^n$,
    define $g^v := \prod_{i=1}^n g_i^{v_i}$.}  $g^{d(h)} = h$ for every $h \in G$.
    
    It is easy to see that $\fRep := G \times (\G \cap [0, 1))^n$ is a set of $f$-representations
    for $(G, d)$; the corresponding reduction map maps~$s \in \G^n/\Lambda$ to $\reduce(s) :=
    \prod_{i=1}^n g_i^{\lfloor e_i \rfloor}$, if $s = (e_i)_i + \Lambda$. Therefore, $(G, d,
    \reduce)$ is an $n$-dimensional infrastructure. The induced giant step map is given by \[ \gs(h,
    h') = \reduce(d(h) + d(h')) = h h' \] for $h, h' \in G$, since $d(h) + d(h') = (e_1, \dots, e_n)
    + \Lambda$ with $e_i \in \Z$ and $g_1^{e_1} \cdots g_n^{e_n} = h h'$.
    
    This shows that giant steps generalize the group operation in this case as well. In particular,
    in this case, the giant step operation is associative, as opposed to general $n$-dimensional
    infrastructures.
  \end{example}
  
  \section{Reduced Ideals}
  \label{reducedideals}
  
  Now that we have obtained a definition of an abstract $n$-dimensional infrastructure, we want to
  construct such an infrastructure from a global field~$K$. The aim of this section is to construct
  the lattice $\Lambda \subset \G^n$, the finite set $X$ as well as the injective map~$d : X \to
  \G^n / \Lambda$. In the next section, we will add a reduction map $\reduce$ for $(X, d)$ such that
  $(X, d, \reduce)$ is an $n$-dimensional infrastructure.
  
  For the rest of the paper, let $\G$ denote $\Z$ if $K$ is a function field and $\R$ if $K$ is a
  number field.
  
  In order to construct the underlying set $X$, we require the notion of a reduced (fractional)
  ideal\footnote{Recall that we always mean ``non-zero fractional ideal'' when we write ``ideal'',
  if not explicitly said otherwise.} in analogy to Example~\ref{realquadratic-example}. In case $K$
  is a function field, reduced ideals correspond to certain reduced divisors in the sense of
  \cite{hessRR}.
  
  The notion of a reduced ideal is rather geometric. To describe it, we define the notion of a
  \emph{box}, which is the set of elements of an ideal (interpreted as a lattice) in a bounded
  area. An ideal will be reduced if a certain box contains elements only at very specific
  positions. Write $S = \{ \frakp_1, \dots, \frakp_{n+1} \}$, where $n = \abs{S} - 1$; then recall
  that the absolute value of an element~$h \in K$ with respect to a place~$\frakp$ is defined as
  $\abs{h}_\frakp = q^{-\nu_\frakp(h) \deg \frakp}$, where $q > 1$ is a constant.
  
  For $t_1, \dots, t_{n+1} \in \G$ and an ideal~$\fraka \in \Id(\calO_K)$, we define \[ B(\fraka,
  (t_1, \dots, t_{n+1})) := \{ h \in \fraka \mid \forall i \in \{ 1, \dots, n+1 \} :
  \abs{h}_{\frakp_i} \le q^{t_i \deg \frakp_i} \}. \] The motivation of this definition comes from
  the number field case; in that scenario, $\fraka$ is a lattice of full rank under the Minkowski
  embedding $K \hookrightarrow K \otimes_\Q \R \cong \R^d$, where $d = [K : \Q]$. The box $B(\fraka,
  (t_1, \dots, t_{n+1}))$ is the set of lattice points lying in the symmetric compact convex set
  described by $(t_1, \dots, t_{n+1})$. If $K$ is totally real, this convex set is a hyperrectangle
  (box) with side lengths $2 e^{t_1}, \dots, 2 e^{t_{n+1}}$, and if $K$ is totally imaginary, this
  convex set is the direct product of $n + 1$ closed discs of radii~$e^{t_1}, \dots,
  e^{t_{n+1}}$. If $K$ is neither totally real nor totally complex, the convex set features both
  properties; for example, if $K$ has one real embedding corresponding to $\frakp_1$ and two complex
  conjugate embeddings corresponding to $\frakp_2$, the convex set is a cylinder of length~$2
  e^{t_1}$ and radius~$e^{t_2}$. Figure~\ref{realquaraticfig} on page~\pageref{realquaraticfig}
  displays the box with parameters~$t_1 = t_2 = 0$ in the real quadratic number field~$K =
  \Q(\sqrt{2})$ as the grey square in the center.
  
  If $\mu \in K^*$, we define the abbreviation \[ B(\fraka, \mu) := B(\fraka, (-\nu_{\frakp_1}(\mu),
  \dots, -\nu_{\frakp_{n+1}}(\mu))). \] This is the smallest box which would containing $\mu$ if
  $\mu \in \fraka$. With this, we are able to define reduced ideals:
  \begin{definition}\hfill
    \label{minimumreduceddef}
    \begin{enuma}
      \item An element $\mu \in \fraka \setminus \{ 0 \}$ is said to be a \emph{minimum} of $\fraka$
        if for every $h \in B(\fraka, \mu)$ we either have $h = 0$ or $\abs{h}_\frakp =
        \abs{\mu}_\frakp$ for all $\frakp \in S$. Denote the set of all minima of $\fraka$ by
        $\calE(\fraka)$.
      \item An  ideal $\fraka$ is said to be \emph{reduced} if $1 \in \fraka$ is a minimum
        of $\fraka$.
    \end{enuma}
  \end{definition}
  The notation of $\calE(\fraka)$ for the set of minima goes back to Y.~Hellegouarch and
  R.~Paysant-Le~Roux \cite{hellegouarchpaysant}.
  
  The property that $\mu$ is a minimum of $\fraka$ means simply that the box $B(\fraka, \mu)$ is
  empty, up to a few elements which always need to belong to $B(\fraka, \mu)$: $0$ is always
  contained in $B(\fraka, \mu)$, as well as $\mu$ and $\varepsilon \mu$ for all $\varepsilon \in
  k^*$, since all absolute values of elements in $k^*$ are $1$. Hence, we ask that all elements in
  $B(\fraka, \mu)$ are either $0$ or have the same infinite absolute values as $\mu$. For example,
  in Figure~\ref{realquaraticfig}, $1$ and $1 + \sqrt{2}$ are minima of $\calO_K$, while $\sqrt{2}$
  is not, since $1 \in B(\calO_K, \sqrt{2}) \setminus \{ 0 \}$ has different absolute values than
  $\sqrt{2}$.
  
  Under certain circumstances, there can be elements in $\fraka$ with the same infinite absolute
  values as $\mu$ other than $\varepsilon \mu$, $\varepsilon \in k^*$, and these elements thus
  belong to $B(\fraka, \mu)$ as well. These elements are the reason why the aforementioned
  equivalence relation is needed: if $\mu \in \fraka$ is such an element, then $\frac{1}{\mu}
  \fraka$ is a reduced ideal different from $\fraka$ which will be mapped to the same element by our
  distance map. For that reason, we have to identify any two such ideals if such elements can exist.

  The following proposition shows that in many important situations, such elements cannot
  occur. This includes in particular the case when an infinite place of degree~one exists. For
  number fields~$K$, this is always the case unless $K$ is totally imaginary, and for function
  fields one can always move to a constant field extension by a splitting field for one of the
  infinite places. Many treatments of the infrastructure and of arithmetic in function fields
  require such a place of degree~one, sometimes explicitly as for He\ss' arithmetic \cite{hessRR}
  and sometimes implicitly by restricting to certain classes of fields; for example, every real
  quadratic field has exactly two infinite places of degree~one, and a cubic number field always has
  a real embedding.
  
  \begin{proposition}
    \label{reducedidealdeg1prop}
    Assume that $\deg \frakp = 1$ for some $\frakp \in S$. Let $\frakb$ be a reduced ideal. Then
    $B(\frakb, (0, \dots, 0)) = k$.
  \end{proposition}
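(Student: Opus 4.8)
The plan is to show the two inclusions $k \subseteq B(\frakb,(0,\dots,0))$ and $B(\frakb,(0,\dots,0)) \subseteq k$ separately, the first being essentially immediate and the second being where the degree-one hypothesis does the work. For the easy inclusion: since $\frakb$ is reduced we have $1 \in \frakb$, hence $\calO_K \subseteq \frakb$ (as $\frakb$ is an $\calO_K$-module), so $k \subseteq \calO_K \subseteq \frakb$; and every $c \in k$ satisfies $\abs{c}_{\frakp_i} \le 1 = q^{0 \cdot \deg\frakp_i}$ for all $i$ (absolute values of elements of $k^*$ are $1$ by the very definition of $k$ via roots of unity/constants, and $\abs{0}_{\frakp_i}=0$). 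Thus $k \subseteq B(\frakb,(0,\dots,0))$.

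For the reverse inclusion, let $h \in B(\frakb,(0,\dots,0))$; I must show $h \in k$. If $h = 0$ we are done, so assume $h \neq 0$. The point is that $B(\frakb,(0,\dots,0)) = B(\frakb, 1)$, the smallest box containing the minimum $1$ of $\frakb$. Since $1$ is a minimum, the definition forces either $h = 0$ or $\abs{h}_\frakp = \abs{1}_\frakp = 1$ for all $\frakp \in S$. So $h$ is a nonzero element of $\frakb$ with $\abs{h}_\frakp = 1$ at every infinite place. Now I invoke the product formula $\prod_{\frakp \in \calP_K} \abs{h}_\frakp = 1$: the contributions from all infinite places are $1$, so $\prod_{\frakp \notin S} \abs{h}_\frakp = 1$. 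But $h \in \frakb$ and $\calO_K \subseteq \frakb$ need not give $h \in \calO_K$; instead I argue as follows. For a finite place $\frakp$, the value $\abs{h}_\frakp$ ranges over $q^{-\nu_\frakp(h)\deg\frakp}$ with $\nu_\frakp(h) \in \Z$; if $h \notin \calO_\frakp$ then $\abs{h}_\frakp > 1$, and if $h \in \frakm_\frakp$ then $\abs{h}_\frakp < 1$.

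The remaining task is to pin down the finite places using $\deg \frakp = 1$ for some $\frakp \in S$, say $\frakp = \frakp_1$. Here is the key inequality: since $1 + h \in \frakb$ (as $1, h \in \frakb$), and for $i \ge 2$ we have $\abs{1+h}_{\frakp_i} \le \max\{\abs{1}_{\frakp_i}, \abs{h}_{\frakp_i}\} \cdots$ — rather than push the ultrametric estimate at $\frakp_1$ (which is archimedean in the number field case, where $\deg\frakp_1 = 1$ means a real embedding), the cleaner route is: consider an arbitrary constant $c \in k$ and the element $h - c$; for a suitable choice of $c$ depending on the residue of $h$ modulo the maximal ideal of the degree-one place, one arranges $\abs{h - c}_{\frakp_1} < 1$ while keeping $\abs{h-c}_{\frakp_i} \le 1$ for $i \ge 2$ (possible precisely because $\deg\frakp_1 = 1$ makes the residue field of $\frakp_1$ equal to $k$ in the function-field case, or because the real embedding lets one shrink the first coordinate in the number-field case). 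Then $h - c \in B(\frakb, 1)$ but $\abs{h-c}_{\frakp_1} \neq 1$, so by minimality of $1$ we must have $h - c = 0$, i.e.\ $h = c \in k$. I expect the main obstacle to be handling the two cases (number field, where $\frakp_1$ is an archimedean real place, versus function field, where $\frakp_1$ is non-archimedean with residue field $k$) uniformly, and in particular making the "adjust by a constant to kill the first absolute value" step precise — in the function-field case this uses that the degree-one condition forces the residue field at $\frakp_1$ to be exactly $k$, and in the number-field case it uses a direct estimate on the real coordinate together with the fact that $k = \{0, \pm 1\}$ there. Once that adjustment lemma is in hand, minimality of $1$ closes the argument immediately.
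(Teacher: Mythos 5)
Your first inclusion ($k \subseteq B(\frakb,(0,\dots,0))$) and the observation that minimality of $1$ forces $\abs{h}_\frakp = 1$ for every $\frakp\in S$ and every nonzero $h$ in the box are both correct. For the reverse inclusion, your strategy diverges from the paper's. In the function-field case the paper uses the Riemann--Roch inequality: it identifies $B(\frakb,(0,\dots,0))$ with $L(\divisor(\frakb))$, notes that reducedness forces $L(\divisor(\frakb)-\frakp)=0$, and then applies \cite[Lemma~I.4.8]{stichtenoth} to squeeze $\dim_k L(\divisor(\frakb))$ between $0$ and $\deg\frakp=1$. Your argument instead works at the level of elements: after normalizing by the constant $c$ representing $h$ modulo $\frakm_{\frakp_1}$ (using that the residue field at $\frakp_1$ is exactly $k$ when $\deg\frakp_1=1$), the ultrametric inequality keeps $\abs{h-c}_{\frakp_i}\le 1$ for $i\ge 2$, while $\abs{h-c}_{\frakp_1}<1$, so minimality forces $h-c=0$. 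That is a valid and more elementary alternative to the dimension count, with the trade-off that it is less ``categorical'' and does not immediately recycle to the analogous step in Proposition~\ref{deg1freps}, whereas the paper's Riemann--Roch argument transfers there verbatim.

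There is, however, a genuine gap in the number-field half as you have written it. You claim that one can choose $c\in k$ so that $\abs{h-c}_{\frakp_1}<1$ \emph{while keeping} $\abs{h-c}_{\frakp_i}\le 1$ for $i\ge 2$. At the archimedean places $\frakp_i$, $i\ge 2$, there is no ultrametric bound; the triangle inequality only gives $\abs{h-c}_{\frakp_i}\le 2$, so the claimed control fails and $h-c$ need not lie in $B(\frakb,(0,\dots,0))$. The correct salvage is more direct and is exactly what the paper does: if $\frakp_1$ corresponds to the real embedding $\sigma_1$, then $\abs{h}_{\frakp_1}=\abs{1}_{\frakp_1}$ forces $\sigma_1(h)=\pm 1$, and since $\sigma_1$ is an injective field embedding, $h=\pm 1\in k$. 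You never need to re-enter the box or invoke minimality a second time in the number-field case; injectivity of the real embedding finishes it outright. So your ``unified adjust-by-a-constant'' framing is sound for function fields but must be replaced by the injectivity argument in the number-field case rather than merely ``made precise.''
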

  
  Before we proceed with the proof, we need to introduce a right inverse $\divisor : \Id(\calO_K)
  \to \Div(K)$ to the natural map $\Div(K) \to \Id(\calO_K)$ described in
  Section~\ref{fnf-intro}. For a fractional ideal~$\frakb = \prod_{\frakp \not\in S} (\frakm_\frakp
  \cap \calO_K)^{n_\frakp}$, define $\divisor(\frakb) := -\sum_{\frakp \not\in S} n_\frakp
  \frakp$. This allows us to relate boxes to \emph{Riemann-Roch spaces}: we have \[ B(\fraka, (t_1,
  \dots, t_{n+1})) = L\biggl( \divisor(\fraka) + \sum_{i=1}^{n+1} t_i \frakp_i \biggr); \] here,
  $L(D) := \{ f \in K^* \mid (f) \ge -D \} \cup \{ 0 \}$ for $D \in \Div(K)$ is the Riemann-Roch
  space of $D$.
  
  \begin{proof}[Proof of Proposition~\ref{reducedidealdeg1prop}.]
    If $K$ is a number field, then $\deg \frakp = 1$ means that $\frakp$ corresponds to a real
    embedding; hence, $\abs{h}_\frakp = \abs{h'}_\frakp$ for $h, h' \in K$ if and only if $h = \pm
    h'$. Thus, if $\frakb$ is reduced, $B(\frakb, (0, \dots, 0)) = k = \{ -1, 0, 1 \}$.
    
    If $K$ is a function field, then $B(\frakb, (0, \dots, 0)) = L(\divisor(\frakb)) \supseteq k$,
    and $L(\divisor(\frakb) - \frakp) = 0$. But by \cite[Lemma~I.4.8]{stichtenoth},
    \begin{align*}
      0 = \dim_k L(\divisor(\frakb) - \frakp) & {}\le \dim_k L(\divisor(\frakb)) \\ & {}\le \dim_k
      L(\divisor(\frakb) - \frakp) + \deg \frakp = 1,
    \end{align*}
    whence $B(\frakb, (0, \dots, 0)) = k$.
  \end{proof}
  
  For the rest of the section, we fix an ideal~$\fraka \in \Id(\calO_K)$. There is a close
  relationship between the set of minima of an ideal and the set of reduced ideals in the ideal
  class of that ideal. First note that the unit group $\calO_K^*$ of $\calO_K$ operates on
  $\calE(\fraka)$ by multiplication: if $\mu \in \calE(\fraka)$ and $\varepsilon \in \calO_K^*$,
  then $\varepsilon \mu \in \calE(\fraka)$. This shows that the map \[ \calE(\fraka) / \calO_K^* \to
  \Id(\calO_K), \qquad \mu \calO_K^* \mapsto \tfrac{1}{\mu} \fraka \] is well-defined and
  injective. The image of this map is exactly the set of reduced ideals in the ideal class of
  $\fraka$. Denote this set by $\Red(\fraka)$. This set, modulo the aforementioned equivalence
  relation, will be our set $X$.
  
  Note that in case $K$ is a function field with $\deg \frakp_i = 1$ for some $i$, the reduced
  ideals $\frakb \in \Red(K)$, where \[ \Red(K) := \bigcup_{\fraka \in \Id(\calO)} \Red(\fraka), \]
  correspond exactly to the divisors~$D \in \Div(K)$ that are reduced with respect to $\frakp_i$ in
  the sense of He\ss\ \cite{hessRR} and satisfy $\nu_{\frakp_j}(D) = 0$ for $j \in \{ 1, \dots, n +
  1 \}$. This is due to the relationship between boxes and Riemann-Roch spaces sketched above, and
  the correspondence is given by $\frakb \mapsto \divisor(\frakb)$.
  
  Next, we want to construct the distance map~$d$. In the process of constructing $d$, we obtain the
  lattice $\Lambda$ and derive the equivalence relation needed to define $X$.  We begin with the
  map \[ \Psi : K^* \to \G^n, \qquad h \mapsto (-\nu_{\frakp_1}(h), \dots, -\nu_{\frakp_n}(h)), \]
  which maps $(K^*, \cdot)$ homomorphically into $(\G^n, +)$. This map plays a crucial role in
  constructing the distance map. The image of $\calO_K^*$ under $\Psi$ is a lattice in $\G^n
  \subseteq \R^n$; it is called the \emph{unit lattice} of $\calO_K$ and is denoted by
  $\Lambda$. For number fields $K$, $\Lambda$ always has full rank; this is a consequence of
  Dirichlet's Unit Theorem. For function fields, $\Lambda$ has full rank if and only if $T$ is
  finite. In case $\Lambda$ has full rank, we have \[ \det \Lambda = \frac{R}{\prod_{i=1}^n \deg
  \frakp_i}. \] In case $\calO_K^*$ has full rank, $\G^n / \Lambda \subseteq \R^n / \Lambda$ is an
  $n$-dimensional torus that will be the codomain of our distance map~$d$.
  
  Note that $\frac{1}{h} \fraka = \frac{1}{h'} \fraka$ if and only if $h' h^{-1} \in \calO_K^*$, and
  this implies $\Psi(h) - \Psi(h') \in \Lambda$. Therefore, the map $\frac{1}{h} \fraka \mapsto
  \Psi(h) + \Lambda$ is well-defined. Ideally, this map will represent our distance
  map. Unfortunately, it is in general not injective on $\Red(\fraka)$, whence we need to identify
  elements in $\Red(\fraka)$ which are mapped onto the same element of $\G^n / \Lambda$ under the
  map $\frac{1}{h} \fraka \mapsto \Psi(h) + \Lambda$. We will define an equivalence relation~$\sim$,
  study it in more detail, and then show in Proposition~\ref{distanceInjectivityProp} that it indeed
  makes this map injective.
  
  If $\frakb, \frakb'$ are ideals in the ideal class of $\fraka$ such that $\frakb = h \frakb'$ with
  $\abs{h}_\frakp = 1$ for all $\frakp \in S$, then $\frakb$ and $\frakb'$ are mapped to the same
  element of $\G^n / \Lambda$. \label{equivalenceRelationDefinition}Hence, we can define the
  equivalence relation $\sim$ on $\Id(\calO_K)$ by \[ \frakb \sim \frakb' :\Longleftrightarrow
  \exists h \in K^* : \frakb = h \frakb' \wedge \forall \frakp \in S : \abs{h}_\frakp = 1. \] We
  thus see that the map $\Red(\fraka)/_\sim \to \G^n/\Lambda$ via $[\frac{1}{\mu} \fraka]_\sim
  \mapsto \Psi(h) + \Lambda$ is well-defined, but we are left to show that it is injective. Note
  that the above equivalence relation~$\sim$ is \emph{not} the equivalence relation on ideals used
  to define the ideal class group~$\Pic(\calO_K)$: we impose the additional condition that
  $\abs{h}_\frakp = 1$ for all $\frakp \in S$.
  
  We can deduce from Proposition~\ref{reducedidealdeg1prop} that this equivalence relation~$\sim$ is
  trivial in case an infinite place of degree~one exists:
  
  \begin{corollary}
    \label{idealcomparecorr}
    Assume that $\deg \frakp = 1$ for some $\frakp \in S$. Let $\frakb$ and $\frakb'$ be two reduced
    ideals. Then $\frakb \sim \frakb'$ if and only if $\frakb = \frakb'$.
  \end{corollary}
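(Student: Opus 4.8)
The plan is to derive both implications from Proposition~\ref{reducedidealdeg1prop}. One direction is free: if $\frakb = \frakb'$, then $h = 1$ witnesses $\frakb \sim \frakb'$, since $\abs{1}_\frakp = 1$ for every $\frakp \in S$.

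For the converse I start from the data defining $\frakb \sim \frakb'$: an element $h \in K^*$ with $\frakb = h\frakb'$ and $\abs{h}_\frakp = 1$ for all $\frakp \in S$. First I would observe that $1 \in \frakb'$ because $\frakb'$ is reduced, so $h = h \cdot 1 \in h\frakb' = \frakb$. Next I would check that $h$ lies in the box $B(\frakb, (0, \dots, 0))$: membership there is precisely the condition $\abs{h}_{\frakp_i} \le q^{0 \cdot \deg \frakp_i} = 1$ for $i = 1, \dots, n+1$, which holds because each $\abs{h}_{\frakp_i}$ equals $1$. Then, since by hypothesis some infinite place has degree one, Proposition~\ref{reducedidealdeg1prop} applied to the reduced ideal $\frakb$ yields $B(\frakb, (0, \dots, 0)) = k$, hence $h \in k$; as $h \neq 0$, in fact $h \in k^*$. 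Finally $k^* \subseteq \calO_K^*$ (the constant field, resp.\ the roots of unity of $K$, consist of units of $\calO_K$), so $h \in \calO_K^*$, and multiplying the fractional ideal $\frakb'$ by a unit leaves it unchanged, whence $\frakb = h\frakb' = \frakb'$.

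I do not anticipate a genuine obstacle here: the only substantive ingredient is Proposition~\ref{reducedidealdeg1prop}, and everything else is an unwinding of the definitions of the box, of $\sim$, and of a reduced ideal. If a symmetric presentation were desired, one could equally note that $h^{-1} \in \frakb'$ and rerun the box computation for $\frakb'$, but that is superfluous once $h \in k^*$ is established.
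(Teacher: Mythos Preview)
Your argument is correct and follows exactly the same route as the paper: show $h \in B(\frakb,(0,\dots,0))$, invoke Proposition~\ref{reducedidealdeg1prop} to get $h \in k^* \subseteq \calO_K^*$, and conclude $\frakb = h\frakb' = \frakb'$. The paper compresses this into a single sentence, while you spell out why $h \in \frakb$ and why membership in the box holds, but the content is identical.
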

  
  \begin{proof}
    If $\frakb = h \frakb'$ with $\abs{h}_\frakp = 1$ for all $\frakp \in S$, we get $h \in
    B(\frakb, (0, \dots, 0))$ and thus, by Proposition~\ref{reducedidealdeg1prop}, $h \in k^*
    \subseteq \calO_K^*$.
  \end{proof}
  
  In the general case, testing $\sim$ is more complicated. The following proposition shows how this
  can be done:
  
  \begin{proposition}
    \label{idealcompareprop}
    Let $\frakb$ and $\frakb'$ be two reduced ideals. Then $\frakb \sim \frakb'$ if and only if
    $B(\frakb (\frakb')^{-1}, (0, \dots, 0)) \neq \{ 0 \}$ and $\deg \divisor(\frakb) = \deg
    \divisor(\frakb')$.
  \end{proposition}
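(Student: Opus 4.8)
The plan is to reduce the statement to an elementary computation of the divisor-degree of a principal ideal. Write $\frakq := \frakb(\frakb')^{-1} \in \Id(\calO_K)$. Since $\divisor : \Id(\calO_K) \to \Div(K)$ is a group homomorphism (it is the $\Z$-linear extension of $\frakm_\frakp \cap \calO_K \mapsto -\frakp$ on the free generators of $\Id(\calO_K)$), we have $\divisor(\frakq) = \divisor(\frakb) - \divisor(\frakb')$, so the condition $\deg \divisor(\frakb) = \deg \divisor(\frakb')$ in the proposition is exactly $\deg \divisor(\frakq) = 0$. The key auxiliary fact I would establish first is the following: if $h \in K^*$ satisfies $\abs{h}_\frakp \le 1$ for all $\frakp \in S$, then
\[
  \deg \divisor(h\calO_K) \;=\; \sum_{\frakp \in S} \nu_\frakp(h)\deg\frakp \;\ge\; 0,
\]
with equality if and only if $\abs{h}_\frakp = 1$ for all $\frakp \in S$. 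This follows from $\divisor(h\calO_K) = -\sum_{\frakp \notin S} \nu_\frakp(h)\frakp$ together with the product formula $\sum_{\frakp \in \calP_K} \nu_\frakp(h)\deg\frakp = 0$, noting that under the hypothesis each summand $\nu_\frakp(h)\deg\frakp$ is $\ge 0$ because $\abs{h}_\frakp = q^{-\nu_\frakp(h)\deg\frakp} \le 1$ and $q > 1$.

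For the forward implication, assume $\frakb \sim \frakb'$, say $\frakb = h\frakb'$ with $\abs{h}_\frakp = 1$ for all $\frakp \in S$. Then $\frakq = h\calO_K$, so $h$ is a non-zero element of $\frakq$ with $\abs{h}_{\frakp_i} = 1 \le q^{0 \cdot \deg\frakp_i}$ for every $i$; hence $h \in B(\frakq, (0, \dots, 0)) \setminus \{0\}$, which is the first required condition. The second condition, $\deg \divisor(\frakq) = 0$, is precisely the equality case of the auxiliary fact applied to this $h$.

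For the converse, assume $B(\frakq, (0, \dots, 0)) \neq \{0\}$ and $\deg \divisor(\frakq) = 0$, and choose a non-zero $h \in B(\frakq, (0, \dots, 0))$; by definition $h \in \frakq$ and $\abs{h}_\frakp \le 1$ for all $\frakp \in S$. Since $\frakq$ is an $\calO_K$-module and $h \in \frakq$, we have $h\calO_K \subseteq \frakq$, so $h\,\frakq^{-1}$ is an integral ideal; hence $\deg \divisor(h\,\frakq^{-1}) \le 0$, with equality if and only if $h\,\frakq^{-1} = \calO_K$. On the other hand, $\deg \divisor(h\,\frakq^{-1}) = \deg \divisor(h\calO_K) - \deg \divisor(\frakq) = \deg \divisor(h\calO_K) \ge 0$ by the auxiliary fact and the hypothesis $\deg \divisor(\frakq) = 0$. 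Therefore $\deg \divisor(h\calO_K) = 0$ and $\frakq = h\calO_K$, i.e.\ $\frakb = h\frakb'$; moreover the equality case of the auxiliary fact gives $\abs{h}_\frakp = 1$ for all $\frakp \in S$, so $\frakb \sim \frakb'$.

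The only delicate point I anticipate is the bookkeeping of signs: in the formula for $\divisor(h\calO_K)$, in the invocation of the product formula, and in the implication ``an integral ideal has non-positive divisor-degree, with equality only for $\calO_K$''. Once those are pinned down, both directions are immediate. I note in passing that the argument uses nothing about $\frakb$ and $\frakb'$ beyond their being non-zero fractional ideals; in particular reducedness is not needed, and the fact that $\frakb$ and $\frakb'$ lie in a common ideal class is forced by the two stated conditions rather than assumed.
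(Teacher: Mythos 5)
Your proof is correct, and the forward direction is essentially identical to the paper's. The converse is where you've done something noticeably more careful. The paper's converse direction reads, a bit tersely, as though a degree-zero divisor is automatically principal; of course what is meant is that the Riemann--Roch space $L(\divisor(\frakb(\frakb')^{-1})) = B(\frakb(\frakb')^{-1},(0,\dots,0))$ is nonzero \emph{and} the degree is zero, so the effective divisor $\divisor(\frakq) + (h)$ (with $h$ a nonzero element of that space) has degree zero and must therefore vanish. Your argument encodes exactly this reasoning, but in ideal-theoretic rather than divisor-theoretic language: you observe that $h\,\frakq^{-1}$ is integral (which is the ``effective'' part), hence has $\deg\divisor \le 0$, while the product-formula bookkeeping forces $\deg\divisor(h\,\frakq^{-1}) \ge 0$. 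Squeezing gives $h\,\frakq^{-1} = \calO_K$ and simultaneously the equality case that forces $\abs{h}_\frakp = 1$ for all $\frakp \in S$. The two arguments are the same computation arranged differently; yours has the advantage of making explicit how both hypotheses enter. Your closing observation that reducedness of $\frakb$ and $\frakb'$ is never used is also correct --- the paper's proof does not use it either --- and it is a reasonable thing to note, though in the paper's development the proposition is only ever invoked for reduced ideals.
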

  
  Note that in case $K$ is a number field, $\deg \divisor(\frakb) = -\log \Norm_{K/\Q}(\frakb)$, and
  in case $K$ is a function field, $\deg \divisor(\frakb) = -\deg \Norm_{K/k(x)}(\frakb)$.
  
  \begin{proof}[Proof of Proposition~\ref{idealcompareprop}.]
    If $\frakb \sim \frakb'$, there exists $h \in K^*$ with $h \frakb' = \frakb$ such that
    $\abs{h}_\frakp = 1$ for all $\frakp \in S$. Hence, $\frakb (\frakb')^{-1} = h \calO_K$ and $h
    \in B(\frakb (\frakb')^{-1}, (0, \dots, 0))$. Moreover, $\divisor(\frakb (\frakb')^{-1}) =
    (h^{-1})$ is principal, i.e.\ of degree~zero, whence $\deg \divisor(\frakb) = \deg
    \divisor(\frakb')$.
    
    Conversely, we see that $\divisor(\frakb (\frakb')^{-1})$ must be principal as $\deg
    \divisor(\frakb (\frakb')^{-1}) = 0$. Hence, there exists $h \in K^*$ with $\divisor(\frakb
    (\frakb')^{-1}) = (h^{-1})$. But then $\frakb (\frakb')^{-1} = h \calO_K$ and $\nu_\frakp(h) = 0$
    for all $\frakp \in S$, i.e.\ $\frakb \sim \frakb'$.
  \end{proof}
  
  We have now obtained a well-defined map $[\frac{1}{h} \fraka]_\sim \mapsto \Psi(h) + \Lambda$, and
  we are able to test whether $\frakb \sim \frakb'$ for ideals~$\frakb, \frakb' \in \Red(K)$. The
  next statement shows that this map is injective when restricted to the non-empty set
  $\Red(\fraka)/_\sim$, and that $\Red(\fraka)/_\sim$ is finite for all global fields and some
  non-global function fields.
  
  \begin{proposition}
    \label{distanceInjectivityProp}
    The map \[ d^\fraka : \Red(\fraka) /_\sim \to \G^n / \Lambda, \qquad [\tfrac{1}{\mu}
      \fraka]_\sim \to \Psi(\mu) + \Lambda \] is injective. In case $K$ is a number field, or $K$ is
    a function field and $T$ is finite, the set $\Red(\fraka) /_\sim$ is finite. In any case, it is
    non-empty. In case $K$ is a global field, $\Red(\fraka)$ itself is finite as well.
  \end{proposition}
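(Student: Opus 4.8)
The plan is to break the statement into three parts: injectivity of $d^\fraka$, finiteness of $\Red(\fraka)/_\sim$ in the relevant cases, non-emptiness, and finiteness of $\Red(\fraka)$ itself.

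For injectivity, suppose $[\tfrac{1}{\mu}\fraka]_\sim$ and $[\tfrac{1}{\mu'}\fraka]_\sim$ map to the same element of $\G^n/\Lambda$, i.e.\ $\Psi(\mu) - \Psi(\mu') \in \Lambda$. Since $\Lambda = \Psi(\calO_K^*)$, there is a unit $\varepsilon$ with $\Psi(\mu/\mu') = \Psi(\varepsilon)$, so $h := \mu/(\mu'\varepsilon)$ satisfies $\nu_{\frakp_i}(h) = 0$ for $i = 1, \dots, n$. By the product formula $\prod_{\frakp \in \calP_K} \abs{h}_\frakp = 1$; away from $S$ we can say nothing directly, but here I want to compare the two reduced ideals $\frakb = \tfrac{1}{\mu}\fraka$ and $\frakb' = \tfrac{1}{\mu'\varepsilon}\fraka = \tfrac{1}{\mu'}\fraka$ (the last equality since $\varepsilon$ is a unit). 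We have $\frakb = h\frakb'$, and $\nu_\frakp(h) = 0$ for all $\frakp \in S$ forces $\deg\divisor(\frakb) = \deg\divisor(\frakb')$ (as $\divisor$ only sees finite places, and the product formula pushes the vanishing at the finite places to have total degree matching the vanishing at $S$, which is zero); then $h \in B(\frakb(\frakb')^{-1},(0,\dots,0)) \ne \{0\}$, so by Proposition~\ref{idealcompareprop} we get $\frakb \sim \frakb'$, i.e.\ $[\frakb]_\sim = [\frakb']_\sim$. This is the argument I would write; the only subtle point is checking that $\nu_{\frakp_i}(h) = 0$ for $i = 1, \dots, n$ together with the product formula actually gives $\abs{h}_\frakp = 1$ for all $\frakp \in S$, which uses that $S = \{\frakp_1, \dots, \frakp_{n+1}\}$ and that $h$ could a priori have poles/zeros at finite places — so strictly I should route the comparison through Proposition~\ref{idealcompareprop}'s degree condition rather than claim $\abs{h}_\frakp = 1$ directly, and indeed Proposition~\ref{idealcompareprop} is designed exactly to handle this.

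For finiteness when $\Lambda$ has full rank: the map $d^\fraka$ is injective into the torus $\G^n/\Lambda$, but a torus is not finite when $\G = \R$, so injectivity alone does not suffice. Instead I would bound the set of reduced ideals in a fixed ideal class directly. A reduced ideal $\frakb = \tfrac{1}{\mu}\fraka$ in the class of $\fraka$ corresponds to a minimum $\mu$ of $\fraka$, and the condition $1 \in \frakb$ being a minimum of $\frakb$ translates (via the box/Riemann-Roch dictionary) into a normalization: $\divisor(\frakb) + \sum_i 0 \cdot \frakp_i$ has $L$-space equal to $k$, forcing $\deg\divisor(\frakb)$ into a bounded range (an upper bound from $k \subseteq L(\divisor(\frakb))$, a lower bound from $L(\divisor(\frakb) - \frakp_i) = 0$ together with Riemann-Roch / the Minkowski-type estimate in the number field case). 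Once $\deg\divisor(\frakb)$ — equivalently $\Norm(\frakb)$ — is bounded, there are only finitely many ideals of $\calO_K$ of bounded norm, hence finitely many reduced ideals in the class. This also handles ``$\Red(\fraka)$ itself is finite'' when $K$ is a global field, by summing over the finitely many ideal classes (the class number is finite for global fields). For the number field case one combines the bounded-norm observation with Minkowski's lattice-point theorem (which guarantees that within the bounded box there must be a nonzero lattice point, giving the lower degree bound); for function fields with $T$ finite, the analogue of Dirichlet's unit theorem ensures $\Lambda$ has full rank and the Riemann-Roch argument as in Proposition~\ref{reducedidealdeg1prop} does the rest. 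The main obstacle here is the function field case with $k$ infinite and $T$ infinite: then $\Lambda$ need not have full rank, the torus picture degenerates, and $\Red(\fraka)/_\sim$ need not be finite — which is why the statement only claims finiteness in the enumerated cases.

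For non-emptiness: I would show $\Red(\fraka) \ne \emptyset$ by exhibiting a minimum of $\fraka$. Take any nonzero $\mu_0 \in \fraka$; if $\mu_0$ is not a minimum, there is a nonzero $\mu_1 \in B(\fraka, \mu_0)$ with strictly smaller box, and one iterates. To see this terminates, note each step strictly decreases $\sum_i (-\nu_{\frakp_i}(\mu_j))$ in an appropriate sense — more cleanly, in the number field case the Minkowski-embedded lattice $\Phi(\fraka)$ has a shortest nonzero vector, and any vector achieving the minimal ``box size'' (say, minimal product or suitably normalized max) is a minimum; in the function field case one uses that $L(\divisor(\fraka) + \sum t_i \frakp_i)$ is zero once $\sum t_i \deg\frakp_i$ is sufficiently negative (by Riemann-Roch), so a descent on box parameters stabilizes at a minimum. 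Dividing $\fraka$ by such a minimum $\mu$ yields a reduced ideal $\tfrac{1}{\mu}\fraka \in \Red(\fraka)$, and its class is then an element of $\Red(\fraka)/_\sim$. I expect injectivity to be the part requiring the most care about the equivalence relation, while finiteness is where the real content (Minkowski / Riemann-Roch plus finiteness of class number) enters, but both are by now standard once Propositions~\ref{reducedidealdeg1prop} and~\ref{idealcompareprop} are in hand.
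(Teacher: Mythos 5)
The main gap is in your injectivity argument. You correctly notice the subtle point: $\Psi$ only controls $\nu_{\frakp_1},\dots,\nu_{\frakp_n}$, so choosing $\varepsilon\in\calO_K^*$ with $\Psi(\varepsilon) = \Psi(\mu) - \Psi(\mu')$ only gives $\nu_{\frakp_i}(h) = 0$ for $i\le n$ (where $h := \mu/(\varepsilon\mu')$), and $\abs{h}_{\frakp_{n+1}} = 1$ does not follow from the product formula alone because $h$ may have zeros and poles at finite places. But your proposed fix --- ``route through Proposition~\ref{idealcompareprop}'s degree condition'' --- is circular: since $\nu_{\frakp_i}(h) = 0$ for $i\le n$ and $\varepsilon$ is a unit, the product formula gives $\deg\divisor(\frakb) - \deg\divisor(\frakb') = \nu_{\frakp_{n+1}}(h)\deg\frakp_{n+1}$, so verifying the degree hypothesis of Proposition~\ref{idealcompareprop} is exactly verifying $\nu_{\frakp_{n+1}}(h) = 0$, which is what you set out to show. (Even the other hypothesis $B(\frakb(\frakb')^{-1},(0,\dots,0))\ne\{0\}$ requires $\abs{h}_{\frakp_{n+1}}\le 1$, which is not automatic and only holds after possibly swapping $\frakb$ and $\frakb'$.)

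The missing ingredient is the minimality of $\mu$ and $\varepsilon\mu'$ in $\fraka$ --- the one piece of structural information your argument never uses. After possibly interchanging $\mu$ and $\mu'$, assume $\abs{\mu}_{\frakp_{n+1}}\le\abs{\varepsilon\mu'}_{\frakp_{n+1}}$. Together with $\abs{\mu}_{\frakp_i}=\abs{\varepsilon\mu'}_{\frakp_i}$ for $i\le n$ this puts $\mu\in B(\fraka,\varepsilon\mu')$; since $\varepsilon\mu'$ is a minimum of $\fraka$ and $\mu\ne 0$, Definition~\ref{minimumreduceddef}~(a) forces $\abs{\mu}_\frakp=\abs{\varepsilon\mu'}_\frakp$ for every $\frakp\in S$. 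This is what the paper's terse proof uses implicitly when it asserts $\abs{\mu}_\frakp = \abs{\varepsilon\mu'}_\frakp$ for all $\frakp\in S$. With this in hand, $h$ satisfies $\abs{h}_\frakp = 1$ for all $\frakp\in S$, and $\frakb\sim\frakb'$ follows directly from the definition of $\sim$, with no appeal to Proposition~\ref{idealcompareprop}.

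Your non-emptiness argument is sound and essentially the paper's, and your finiteness argument is in the right spirit for number fields and global function fields. One point to tighten: when $K$ is a function field with $T$ finite but $k$ infinite, a bound on $\deg\divisor(\frakb)$ for reduced $\frakb$ does not yield finitely many ideals, so your norm-bounding route does not directly establish finiteness of $\Red(\fraka)/_\sim$ in that case. The paper handles it more directly: once injectivity is known, $d^\fraka$ embeds $\Red(\fraka)/_\sim$ into $\G^n/\Lambda = \Z^n/\Lambda$, which is finite precisely because $T$ (of finite index in $\Z^n/\Lambda$) is finite.
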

  
  Recall that when $K$ is a function field with finite constant field, then $T$ is always
  finite. Note that the injectivity of $d^\fraka$ for number fields is also shown by Schoof in
  \cite[Lemma~9.2~(ii)]{schoofArakelov}. The finiteness result for global fields is well known, see
  for example \cite[Theorems~3 and 4]{hellegouarchpaysant}.
  
  \begin{proof}[Proof of Proposition~\ref{distanceInjectivityProp}.]
    For injectivity, assume that $d^\fraka(\frac{1}{\mu} \fraka) = d^\fraka(\frac{1}{\mu'} \fraka)$;
    this means that $\Psi(\mu) - \Psi(\mu') \in \Lambda$. If we choose $\varepsilon \in \calO_K^*$
    with $\Psi(\varepsilon) = \Psi(\mu) - \Psi(\mu')$, we obtain $\abs{\mu}_\frakp =
    \abs{\varepsilon \mu'}_\frakp$ for every $\frakp \in S$. Therefore, $h := \frac{\mu}{\varepsilon
    \mu'}$ satisfies $\abs{h}_\frakp = 1$ for all $\frakp \in S$, and $h \cdot \frac{1}{\mu} \fraka
    = \frac{1}{\mu'} \varepsilon^{-1} \fraka = \frac{1}{\mu'} \fraka$, whence $\frac{1}{\mu} \fraka
    \sim \frac{1}{\mu'} \fraka$.
    
    To see that $\Red(\fraka)$ is non-empty, it suffices to show that $\fraka$ has at least one
    minimum. This can be done directly using Riemann's Inequality or Minkowski's Lattice Point
    Theorem, or one can use tools as the Reduction Lemma~\ref{reduction-lemma}, applied to $(\fraka,
    (0, \dots, 0))$. It returns a tuple whose first component is the equivalence class of an element
    in $\Red(\fraka)$.
    
    In case $K$ is a function field and $T$ is finite, the finiteness of $\Red(\fraka) / _\sim$
    follows from the fact that $\G^n / \Lambda$ is finite, since $T$ is isomorphic to a subgroup of
    $\G^n / \Lambda$ of finite index. If moreover $k$ is finite, note that the equivalence class
    $[\frakb]_\sim$ of $\frakb$ is finite for every~$\frakb$ since $\frakb = f \frakb'$ with
    $\abs{f}_\frakp = 1$ for all $\frakp \in S$ implies $f \in B(\frakb, (0, \dots, 0))$, which is a
    finite $k$-vector space and thus also a finite set. Therefore, $\Red(\frakb)$ is the union of
    finitely many finite sets.
    
    Finally, in case $K$ is a number field, Remark~\ref{frepremarks}~(b) and
    Proposition~\ref{frepdegreeinbothcases:body} show that if $\frakb$ is a reduced ideal, then
    $\frakb^{-1}$ is an integral ideal with bounded norm. As there are only finitely many of these,
    $\Red(\fraka)$ itself is finite.
  \end{proof}
  
  Assume that $K$ is a global field. If we define $X^\fraka := \Red(\fraka)/_\sim$, we obtain the
  first ingredients of an $n$-dimensional infrastructure: a finite set $X^\fraka$, a lattice
  $\Lambda \subseteq \G^n$ of full rank, and an injective map $d^\fraka : X^\fraka \to \G^n /
  \Lambda$.
  
  The map $d^\fraka$ takes the equivalence class of a reduced ideal~$\frakb$ in the ideal class of
  $\fraka$, say $\frakb = \frac{1}{\mu} \fraka$, and maps it to its ``distance'' $\Psi(\mu)$, which
  is well-defined up to elements of $\Lambda$. Chosing the logarithmic absolute value vector of the
  relative generator~$\mu$ as the distance generalizes Shanks' original definition of distance in
  infrastructures \cite{shanks-infra}, and is used in most treatments of the infrastructure, for
  example in the works of Buchmann and Williams. A notable difference is Lenstra's distance function
  \cite{lenstra-infrastructure}. In the case of function fields, this is also the common measure
  used to define distances, at least in the case of unit rank~one
  \cite{paulus-rueck,scheidler-infrastructurepurelycubic,ericphd}.
  
  In this section, we obtained for every~$\fraka \in \Id(\calO)$ a finite set~$X^\fraka$, a lattice
  of full rank~$\Lambda \subseteq \G^n$, as well as an injective map $d^\fraka : X^\fraka \to \G^n /
  \Lambda$. Here, we needed to assume that $K$ is a global field to ensure that $X^\fraka$ is finite
  and $\Lambda$ is of full rank, though this can also be true for certain function fields with
  infinite constant fields. In fact, for arbitrary function fields, $X^\fraka$ is finite if, and
  only if, $\Lambda$ is of full rank. The ingredient this is still missing in order to obtain an
  $n$-dimensional infrastructure in the sense of Definition~\ref{ndiminfradef}, namely, a reduction
  map, will be defined in the next section.
  
  \section{$f$-Representations in Global Fields}
  \label{geninfra}
  
  In this section we introduce $f$-representations~$\fRep(\fraka)$ for $(\Red(\fraka)/_\sim,
  d^\fraka)$. Using the equivalence of $f$-representations and reduction maps discussed in
  Section~\ref{ndiminfra}, this yields a reduction map~$\reduce^\fraka : \G^n / \Lambda \to X^\fraka
  = \Red(\fraka)/_\sim$, so that $(X^\fraka, d^\fraka, \reduce^\fraka)$ is an $n$-dimensional
  infrastructure in the sense of Definition~\ref{ndiminfradef}.
  
  Before we define $f$-representations for arbitrary number fields and function fields, we want to
  consider a special case, namely~$\deg \frakp_{n+1} = 1$. In this case, the definition of an
  $f$-representation can be drastically simplified and stated with a lot less technical
  involvement. We distinguish the simpler scenario from the general case by appending an asterisk to
  the $f$ in $f$-representations. By Corollary~\ref{idealcomparecorr}, we can
  replace~$\Red(\fraka)/_\sim$ by $\Red(\fraka)$ itself, as every equivalence class $[\frakb]_\sim$
  contains exactly one reduced ideal. Recall that in this case, an ideal~$\frakb \in \Id(\calO_K)$
  is reduced if $B(\frakb, (0, \dots, 0)) = k$ by Proposition~\ref{reducedidealdeg1prop}. An
  $f$-representation should be a reduced ideal~$\frakb$ together with numbers~$t_1, \dots, t_n \ge
  0$ which determine how far the box $B(\frakb, (0, \dots, 0))$ can be enlarged in the directions of
  $\frakp_1, \dots, \frakp_n$ without containing anything but $k$. More precisely:
  
  \begin{definition}
    An \emph{$f\ast$-representation} is a tuple~$(\frakb, (t_1, \dots, t_n)) \in \Red(\fraka) \times
    \G^n$ such that $B(\frakb, (t_1, \dots, t_n, 0)) = k$. Denote the set of all
    $f\ast$-representations in $\Red(\fraka) \times \G^n$ by $\fRepast(\fraka)$.
  \end{definition}
  
  We say that $(\frakb, t) \in \fRepast(\fraka)$ \emph{represents} $d^\fraka([\frakb]_\sim) + t \in
  \G^n/\Lambda$.
  
  \begin{remark}\hfill
    \label{frepremarksast}
    \begin{enuma}
      \item If $\frakb \in \Red(\fraka)$, then always $(\frakb, (0, \dots, 0)) \in
      \fRepast(\fraka)$.
      \item If $\frakb = \frac{1}{\mu} \fraka$ for some $\mu \in K^*$, and if $t_1, \dots, t_n \in
      \G$ are elements such that $B(\frakb, (t_1, \dots, t_n, 0)) = k$, then $(\frakb, (0, \dots,
      0)) \in \fRepast(\fraka)$. In particular, $\frakb \in \Red(\fraka)$. This shows that the
      assumption that $\frakb \in \Red(\fraka)$ in the definition is not actually needed.
    \end{enuma}
  \end{remark}
  
  Now we drop the assumption that $\deg \frakp_{n+1} = 1$. We have to introduce certain
  technicalities to ensure that $f$-representations are well-defined. First, as in the case of
  reduced ideals, we will only have $k \subseteq B(\frakb, (t_1, \dots, t_n, 0))$. To ensure that
  this set does not contain too many elements, we need to introduce a technical tool, namely a total
  preorder\footnote{A total preorder~$\le$ on a set $X$ is a binary relation which is reflexive and
  transitive such that for every $x, y \in X$, we have $x \le y$ or $y \le x$.} on $K^*$. For $h, h'
  \in K^*$, define \[ h \le h' :\Longleftrightarrow (\abs{h}_{\frakp_{n+1}}, \abs{h}_{\frakp_1},
  \dots, \abs{h}_{\frakp_n}) \le_{\ell ex} (\abs{h'}_{\frakp_{n+1}}, \abs{h'}_{\frakp_1}, \dots,
  \abs{h'}_{\frakp_n}), \] where $\le_{\ell ex}$ is the usual lexicographical order on
  $\R^{n+1}$. Using this notion, we define $f$-representations as follows:
  
  \begin{definition}\label{fRepDefForGlobalField}
    An \emph{$f$-representation} is a tuple $([\frakb]_\sim, (t_1, \dots, t_n)) \in
    \Red(\fraka)/_\sim \times \G^n$ such that $1 \in B(\frakb, (t_1, \dots, t_n, 0)) \setminus \{ 0
    \}$ is a smallest element with respect to $\le$. Denote the set of all $f$-representations in
    $\Red(\fraka)/_\sim \times \G^n$ by $\fRep(\fraka)$.
  \end{definition}
  
  As above, we say that $([\frakb]_\sim, t) \in \fRep(\fraka)$ \emph{represents} $d^\fraka([\frakb]_\sim) +
  t \in \G^n/\Lambda$.
  
  The condition that $1$ is a smallest element with respect to $\le$ ensures that all elements $h
  \in B(\frakb, (t_1, \dots, t_n, 0)) \setminus \{ 0 \}$ satisfy $\abs{h}_{\frakp_{n+1}} =
  1$. Moreover, it ensures that $\frakb$ is reduced, since any element in $B(\frakb, (0, \dots, 0))
  \setminus \{ 0 \}$ whose absolute values are not equal to 1 would be strictly less than $1$ with
  respect to this order.
  
  In fact, the choice of $\le$ is somewhat arbitrary. One could replace $\le$ with any other
  preorder on $K^*$ such that:
  \begin{enuma}
    \item if $h, h' \in K^*$ satisfy $\abs{h}_{\frakp_{n+1}} < \abs{h'}_{\frakp_{n+1}}$, then $h <
    h'$;
    \item if $h, h', h'' \in K^*$ satisfy $h' \le h''$, then $h h' \le h h''$;
    \item for every ideal~$\frakb$ and any $t_1, \dots, t_{n+1} \in \G$, the set $B(\frakb, (t_1,
    \dots, t_{n+1})) \setminus \{ 0 \}$ has a smallest element with respect to $\le$ if it is
    non-empty, and this element happens to be a minimum of $\frakb$ in the sense of
    Definition~\ref{minimumreduceddef};
    \item if $h \le h'$ and $h' \le h$ for $h, h' \in K^*$, we have $\abs{h}_\frakp =
    \abs{h'}_\frakp$ for every~$\frakp \in S$.
  \end{enuma}
  The choice of $\le$ as the lexicographical order on vectors of absolute values is a convenient
  choice satisfying these conditions, in particular since it is well-suited for computations.
  
  In case $\deg \frakp_{n+1} = 1$, $f$-representations and the simpler $f\ast$-representations
  coincide; this is shown in Proposition~\ref{deg1freps}. Before we establish this, we state a few
  more remarks.
  
  \begin{remark}\hfill
    \label{frepremarks}
    \begin{enuma}
      \item The definition of an $f$-representation depends only on the equivalence class
      $[\frakb]_\sim$ of $\frakb$: if $\frakb, \frakb'$ are two reduced ideals with $\frakb \sim
      \frakb'$, an element with given absolute values in $B(\frakb, (t_1, \dots, t_n, 0))$ exists if
      and only if an element with the same absolute values exists in $B(\frakb', (t_1, \dots, t_n,
      0))$. Therefore, $\fRep(\fraka)$ is well-defined.
      \item If $(\frakb, (t_1, \dots, t_n)) \in \fRepast(\fraka)$, then $([\frakb]_\sim, (t_1,
      \dots, t_n)) \in \fRep(\fraka)$.
      \item If $\frakb \in \Red(\fraka)$, then always $([\frakb]_\sim, (0, \dots, 0)) \in
      \fRep(\fraka)$. That is, the reduced ideals in the ideal class of $\fraka$, modulo the
      equivalence relation~$\sim$, can be embedded into $\fRep(\fraka)$.
      \item If $\frakb = \frac{1}{\mu} \fraka$ for some $\mu \in K^*$, and if $t_1, \dots, t_n \in
      \G$ are elements such that $1 \in B(\frakb, (t_1, \dots, t_n, 0)) \setminus \{ 0 \}$ is a
      smallest element with respect to $\le$, then $([\frakb]_\sim, (0, \dots, 0)) \in
      \fRep(\fraka)$. In particular, $\frakb \in \Red(\fraka)$. This shows that the assumption that
      $[\frakb]_\sim \in \Red(\fraka)/_\sim$ in the definition is not actually needed.
    \end{enuma}
  \end{remark}
  
  As mentioned before, in case of $\deg \frakp_{n+1} = 1$, $f$-representations are equivalent to the
  simpler $f\ast$-representations introduced first:
  
  \begin{proposition}
    \label{deg1freps}
    The map \[ \fRepast(\fraka) \to \fRep(\fraka), \qquad (\frakb, (t_1, \dots, t_n)) \mapsto
    ([\frakb]_\sim, (t_1, \dots, t_n)) \] is always an injection. If further $\deg \frakp_{n+1} =
    1$, it is a bijection.
  \end{proposition}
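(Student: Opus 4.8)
The plan is to handle the two assertions separately. The map is well-defined as a map into $\fRep(\fraka)$ by Remark~\ref{frepremarks}(b), so what remains is to prove injectivity in general and surjectivity under the hypothesis $\deg\frakp_{n+1}=1$.

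For injectivity I would argue as follows. Suppose two $f\ast$-representations $(\frakb,(s_1,\dots,s_n))$ and $(\frakb',(s_1',\dots,s_n'))$ have the same image. Comparing second coordinates gives $s_i=s_i'$, and from $1\in B(\frakb,(s_1,\dots,s_n,0))=k$ one reads off $q^{s_i\deg\frakp_i}\ge\abs{1}_{\frakp_i}=1$, hence $s_i\ge 0$. Equality of first coordinates means $\frakb\sim\frakb'$, say $\frakb=h\frakb'$ with $\abs{h}_\frakp=1$ for all $\frakp\in S$; since $1\in\frakb$ this gives $h^{-1}\in\frakb'$, and then $\abs{h^{-1}}_\frakp=1$ together with $s_i'\ge 0$ puts $h^{-1}$ inside $B(\frakb',(s_1',\dots,s_n',0))=k$. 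Hence $h\in k^*\subseteq\calO_K^*$, so $\frakb=h\frakb'=\frakb'$, and the two $f\ast$-representations coincide. Note this uses no hypothesis on $\deg\frakp_{n+1}$.

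For surjectivity when $\deg\frakp_{n+1}=1$, I would start from $([\frakb]_\sim,(t_1,\dots,t_n))\in\fRep(\fraka)$, use Corollary~\ref{idealcomparecorr} to replace $[\frakb]_\sim$ by its unique reduced representative $\frakb$ (the $f$-representation condition holding for this representative by Remark~\ref{frepremarks}(a)), and aim to show $B(\frakb,(t_1,\dots,t_n,0))=k$, which immediately exhibits $(\frakb,(t_1,\dots,t_n))$ as a preimage. The crucial observation is that, since $1$ is a smallest element of $B(\frakb,(t_1,\dots,t_n,0))\setminus\{0\}$ for $\le$, every nonzero $h$ in that box satisfies $1\le h$, hence $\abs{h}_{\frakp_{n+1}}\ge 1$; combined with the box condition $\abs{h}_{\frakp_{n+1}}\le q^{0}=1$ this forces $\nu_{\frakp_{n+1}}(h)=0$. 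In the number field case $\frakp_{n+1}$ is then a real place, so $\abs{\sigma(h)}=1$ gives $h=\pm 1\in k$ for every nonzero $h$ in the box; since the constants visibly lie in the box (they lie in $\calO_K\subseteq\frakb$, have all absolute values $1$, and $t_i\ge 0$), one concludes $B(\frakb,(t_1,\dots,t_n,0))=k$. In the function field case I would write $D:=\divisor(\frakb)+\sum_{i=1}^n t_i\frakp_i$, so that the box equals $L(D)$; the valuation statement shows that a nonzero element of $L(D-\frakp_{n+1})$ would lie in $L(D)$ yet have $\nu_{\frakp_{n+1}}\ge 1$, which is impossible, so $L(D-\frakp_{n+1})=0$, whence $\dim_k L(D)\le\dim_k L(D-\frakp_{n+1})+\deg\frakp_{n+1}=1$ by \cite[Lemma~I.4.8]{stichtenoth}, and $1\in L(D)$ then forces $L(D)=k\cdot 1=k$.

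The main obstacle is this last step: recognizing that the ``smallest element'' clause in the definition of an $f$-representation is exactly strong enough to kill $L(D-\frakp_{n+1})$ and thereby pin $L(D)$ down to the constants via Riemann--Roch (and, in the number field case, to force the single real absolute value to be $1$). A minor point to keep track of is that neither definition of $f$- nor $f\ast$-representation imposes a sign on the $t_i$ (resp.\ $s_i$), but the presence of $1$ in the relevant box forces nonnegativity, which is used in the injectivity argument.
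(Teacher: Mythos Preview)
Your proof is correct and follows essentially the same route as the paper: injectivity via the observation that $\frakb\sim\frakb'$ forces the connecting element into a box equal to $k$, and surjectivity by showing that the $f$-representation condition pins $\abs{h}_{\frakp_{n+1}}=1$ for every nonzero $h$ in the box, then invoking the real-embedding argument (number fields) or the Riemann--Roch dimension bound via $L(D-\frakp_{n+1})=0$ (function fields). The only cosmetic difference is that the paper deduces $B(\frakb,(0,\dots,0))=k$ first and argues from there, while you work directly with $B(\frakb',(s_1',\dots,s_n',0))=k$; both are the same idea.
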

  
  \begin{proof}
    The map is well-defined by Remark~\ref{frepremarks}~(b). To see that it is injective, note that
    $k \subseteq B(\frakb, (0, \dots, 0)) \subseteq B(\frakb, (t_1, \dots, t_n, 0)) = k$ for
    $(\frakb, (t_1, \dots$, $t_n)) \in \fRepast(\fraka)$, whence $\frakb \sim \frakb'$ for $\frakb'
    \in \Red(\fraka)$ implies $\frakb = \frakb'$. Therefore, $[\frakb]_\sim$ contains exactly one
    element, whence $([\frakb]_\sim, (t_1, \dots, t_n))$ has exactly one preimage.
    
    To see that the map is surjective in case $\deg \frakp_{n+1} = 1$, let $([\frakb]_\sim, (t_1,
    \dots, t_n)) \in \fRep(\fraka)$. Note that $\abs{h}_{\frakp_{n+1}} = 1$ for all $h \in B(\frakb,
    (t_1, \dots, t_n, 0)) \setminus \{ 0 \}$. We can proceed in a very similar manner as in the
    proof of Proposition~\ref{reducedidealdeg1prop}. In case $K$ is a number field, this shows that
    $B(\frakb, (t_1, \dots, t_n, 0)) = \{ -1, 0, 1 \} = k$.
    
    In case $K$ is a function field, $B(\frakb, (t_1, \dots, t_n, 0)) = L(D)$ with $D :=
    \divisor(\frakb) + \sum_{i=1}^n t_i \frakp_i$, and we know that $L(D - \frakp_{n+1}) = \{ 0
    \}$. As in the proof of Proposition~\ref{reducedidealdeg1prop}, we must have $\dim_k L(D) = 1$,
    whence $1 \in L(D)$ implies $L(D) = k$.
    
    So in both cases, $B(\frakb, (t_1, \dots, t_n, 0)) = k$, whence $(\frakb, (t_1, \dots, t_n)) \in
    \fRepast(\fraka)$ is a preimage of $([\frakb]_\sim, (t_1, \dots, t_n)) \in \fRep(\fraka)$.
  \end{proof}
  
  Before we show that $\fRep(\fraka)$ is indeed a set of $f$-representations for
  $(\Red(\fraka)/_\sim$, $d^\fraka)$ in the sense of Definition~\ref{reductionmap-frep-definition},
  we show the following two lemmata, which illustrate how $f$-representations can be obtained
  (``reduction'') and in which way they are unique. These lemmata are crucial to prove that the
  induced map $\fRep(\fraka) \to \G^n / \Lambda$, $(x, t) \mapsto d^\fraka(x) + t$ is a bijection:
  the Reduction Lemma shows that the map is surjective, and the Uniqueness Lemma shows that the map
  is injective.
  
  The first result, the Reduction Lemma, shows that any tuple $(\frakb, (t_1, \dots, t_n)) \in
  \Id(\calO_K) \times \G^n$ can be \emph{reduced} to an $f$-representation. Similar to reducing an
  ideal, this procedure divides by a minimum~$\mu$ of the ideal. The $t_i$\indexplural\ have to be
  adjusted by the valuations of $\mu$. In particular, this result shows that for every
  ideal~$\fraka$, the set $\Red(\fraka)$ is not empty, hence giving another proof of the
  non-emptiness result in Proposition~\ref{distanceInjectivityProp}. In fact, the proof is very
  similar to the proof of that proposition, except that here, we divide by a very specific minimum
  of $\frakb$.
  
  \begin{lemma}[Existence and Reduction]
    \label{reduction-lemma}
    Let $\frakb$ be any ideal equivalent to $\fraka$, and let $t_1, \dots, t_n \in \G$. Then there
    exists a smallest $\ell \in \G$ such that $B_\ell := B(\frakb, (t_1, \dots, t_n, \ell))$
    $\setminus \{ 0 \}$ is non-empty. If $\mu \in B_\ell$ is a smallest element with respect to
    $\le$, then \[ ([\tfrac{1}{\mu} \frakb]_\sim, (t_1 + \nu_{\frakp_1}(\mu), \dots, t_n +
    \nu_{\frakp_n}(\mu))) \in \fRep(\fraka). \]
  \end{lemma}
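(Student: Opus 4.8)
The plan is to establish the two assertions of Lemma~\ref{reduction-lemma} in turn: first the existence of a smallest $\ell \in \G$ for which $B_\ell$ is non-empty, and then that dividing $\frakb$ by a $\le$-smallest element $\mu$ of $B_\ell$ and translating $t_1, \dots, t_n$ by the corresponding valuations of $\mu$ produces an element of $\fRep(\fraka)$. The computational engine throughout is the scaling identity
\[ B\bigl( \tfrac{1}{\mu}\frakb, (s_1, \dots, s_{n+1}) \bigr) \;=\; \tfrac{1}{\mu}\, B\bigl( \frakb, (s_1 - \nu_{\frakp_1}(\mu), \dots, s_{n+1} - \nu_{\frakp_{n+1}}(\mu)) \bigr), \]
valid for all $s_1, \dots, s_{n+1} \in \G$, which is immediate from $\abs{\mu^{-1} h}_\frakp = \abs{h}_\frakp\, q^{\nu_\frakp(\mu)\deg\frakp}$. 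I would record this first, together with the obvious monotonicity $\ell \le \ell' \Rightarrow B_\ell \subseteq B_{\ell'}$.

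\emph{Existence of the minimal $\ell$.} I would show that $L := \{ \ell \in \G \mid B_\ell \neq \emptyset \}$ is non-empty, bounded below, and attains its infimum. For $\ell$ large the box is non-trivial: in the number field case $B(\frakb, (t_1, \dots, t_n, \ell))$ consists of the points of the full-rank lattice $\frakb$ inside a compact symmetric convex body whose volume tends to infinity with $\ell$, so Minkowski's Lattice Point Theorem applies; in the function field case one uses $B(\frakb, (t_1, \dots, t_n, \ell)) = L\bigl(\divisor(\frakb) + \sum_i t_i \frakp_i + \ell\frakp_{n+1}\bigr)$ and Riemann's Inequality to force dimension at least $1$ once the degree is large enough. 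For $\ell$ very negative the box is trivial: if $0 \neq h \in \frakb$ then $\frakb^{-1} h \calO_K$ is an integral ideal, whence $\prod_{\frakp \notin S} \abs{h}_\frakp \le q^{\deg\divisor(\frakb)}$; the product formula then bounds $\prod_{\frakp \in S} \abs{h}_\frakp$ from below, and together with $\abs{h}_{\frakp_i} \le q^{t_i \deg\frakp_i}$ for $i \le n$ this keeps $\abs{h}_{\frakp_{n+1}}$ bounded away from $0$, so $B_\ell = \emptyset$ once $q^{\ell\deg\frakp_{n+1}}$ drops below that threshold. Finally, the infimum is attained: if $\G = \Z$ this is automatic for a bounded-below set of integers, and if $\G = \R$ it follows because the set of $0 \neq h \in \frakb$ meeting the first $n$ constraints with $\abs{h}_{\frakp_{n+1}}$ below some fixed achievable bound is finite (lattice points in a compact set), so $\abs{h}_{\frakp_{n+1}}$ attains a least value over that set.

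\emph{The reduced tuple.} Fixing this minimal $\ell$ and a $\le$-smallest $\mu \in B_\ell$, set $\frakb' := \mu^{-1}\frakb$ and $t_i' := t_i + \nu_{\frakp_i}(\mu)$ for $i \le n$. The crux is that in fact $-\nu_{\frakp_{n+1}}(\mu) = \ell$: one inequality is $\mu \in B_\ell$, and the reverse holds because $\mu$ already lies in $B(\frakb, (t_1, \dots, t_n, -\nu_{\frakp_{n+1}}(\mu))) \setminus \{0\}$, so minimality of $\ell$ rules out $-\nu_{\frakp_{n+1}}(\mu) < \ell$. Plugging $(s_1, \dots, s_{n+1}) = (t_1', \dots, t_n', 0)$ into the scaling identity then gives $B(\frakb', (t_1', \dots, t_n', 0)) = \mu^{-1}(B_\ell \cup \{0\})$, so $1 = \mu^{-1}\mu$ lies in $B(\frakb', (t_1', \dots, t_n', 0)) \setminus \{0\}$, and because $\le$ is compatible with multiplication and $\mu$ is $\le$-smallest in $B_\ell$, the element $1$ is $\le$-smallest in $\mu^{-1} B_\ell$. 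Since $\frakb$, and hence $\frakb'$, lies in the ideal class of $\fraka$, Remark~\ref{frepremarks}~(d) upgrades this to $\frakb' \in \Red(\fraka)$, so $[\frakb']_\sim \in \Red(\fraka)/_\sim$, and Definition~\ref{fRepDefForGlobalField} then yields $([\frakb']_\sim, (t_1', \dots, t_n')) \in \fRep(\fraka)$.

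I expect the main obstacle to be the first assertion, and specifically the attainment of the minimal $\ell$: the two-sided bounds are routine once one invokes Minkowski/Riemann--Roch and the product formula, but verifying that the infimum is realized needs a small finiteness argument phrased to cover both $\G = \Z$ and $\G = \R$. The second assertion, by contrast, is essentially the scaling identity plus the observation that the lexicographic tie-break built into $\le$ is engineered precisely so that ``reduce in the $\frakp_{n+1}$-direction, then keep the $\le$-smallest survivor'' returns a box in which $1$ is $\le$-minimal --- that is, an $f$-representation.
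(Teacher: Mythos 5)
Your proof is correct and follows essentially the same route as the paper's: boundedness of the set $\{\ell : B_\ell \neq \emptyset\}$ from below by the product formula and from above by Minkowski / Riemann--Roch, attainment of the minimum via finiteness of a single box (number field case) or integrality of $\G$ (function field case), then the scaling identity $B\bigl(\tfrac{1}{\mu}\frakb, (t_1', \dots, t_n', 0)\bigr) = \tfrac{1}{\mu} B(\frakb, (t_1, \dots, t_n, \ell))$, from which $1$ inherits $\le$-minimality because $\le$ is invariant under multiplication by $\mu^{-1}$. Your observation that $-\nu_{\frakp_{n+1}}(\mu) = \ell$, needed to make the scaled box come out exactly right, is also what the paper records, and your appeal to Remark~\ref{frepremarks}~(d) to conclude $\tfrac{1}{\mu}\frakb \in \Red(\fraka)$ is actually the more precise citation (the paper points to part~(c), which only covers the case $t_i = 0$; part~(d) is the statement that yields reducedness of $\tfrac{1}{\mu}\frakb$ from the $\le$-minimality of $1$ in the enlarged box). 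The one thing the paper does that you omit is a short remark on why a $\le$-smallest $\mu$ exists at all (finiteness of $B_\ell$ for number fields; finitely many absolute-value vectors since $B_\ell \cup \{0\}$ is a finite-dimensional $k$-space for function fields), but since the lemma hypothesizes such a $\mu$, your conditional reading is defensible.
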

  
  The proof of Lemma~\ref{reduction-lemma} shows why $\ell$ and $\mu$ exist, as claimed in the
  statement of the lemma.
  
  Before we prove this result, we want to discuss it in the function field case. Let $D =
  \divisor(\frakb) + \sum_{i=1}^n t_i \frakp_i$; then $B_\ell = L(D + \ell \frakp_{n+1}) \setminus
  \{ 0 \}$. In the case of function fields with $\deg \frakp_{n+1} = 1$, He\ss' reduction method as
  described in \cite{hessRR} works by minimizing $\ell$ with $L(D + \ell \frakp_{n+1}) \neq \{ 0
  \}$, then chosing an element $\mu \in L(D + \ell \frakp_{n+1}) \setminus \{ 0 \}$ and replacing
  $D$ by \[ D + \ell \frakp_{n+1} + (\mu) = \divisor\bigl(\tfrac{1}{\mu} \frakb\bigr) + \sum_{i=1}^n
  (t_i + \nu_{\frakp_i}(\mu)) \frakp_i. \] Since $\deg \frakp_{n+1} = 1$, $\dim_k L(D + \ell
  \frakp_{n+1}) = 1$, so the choice of $\le$ does not matter: any other element~$\mu'$ of $B_\ell$
  will yield the same reduced divisor~$D + (\mu')$.
  
  If $\deg \frakp_{n+1} > 1$, the condition that $\mu$ is a smallest element in $B_\ell$ with
  respect to $\le$ ensures that $\mu$ is indeed a minimum of $\frakb$, i.e.\ that $\frac{1}{\mu}
  \frakb$ is reduced in the sense of Definition~\ref{minimumreduceddef}. This shows that the
  procedure described in the lemma generalizes He\ss' reduction.
  
  Note that if we consider the set $X = \bigcup_{\ell \in \G} B_\ell$, then $X$ has a smallest
  element with respect to $\le$, and every such smallest element~$\mu$ will satisfy that $\ell =
  -\nu_{\frakp_{n+1}}(\mu)$ is minimal with $B_\ell \neq \emptyset$: this is ensured by the choice
  of $\le$, which ``prefers'' elements with smaller absolute
  value~$\abs{\placeholder}_{\frakp_{n+1}}$. Hence, we could relax the lemma by not asking that
  $\ell$ is minimal, but just that $B_\ell \neq \emptyset$.
  
  \begin{proof}[Proof of Lemma~\ref{reduction-lemma}.]
    If $\ell \ll 0$, we have $B_\ell = \emptyset$ by the Product Formula.\footnote{Here, a statement
    being true for $x \ll 0$ (respectively, $x \gg 0$) means that there exists some $N$ such that
    the statement holds for all $x \le N$ (respectively, $x \ge N$).} For $\ell \gg 0$, we get that
    $B_\ell\neq \emptyset$ by Riemann's Inequality, respectively, Minkowski's Lattice Point
    Theorem. Choose $\ell \in \G$ minimal such that $B_\ell \neq \emptyset$; in the number field
    case, this is possible since $B_\ell$ is a finite set: hence, if $\ell'$ is chosen such that
    $B_{\ell'}$ is non-empty, we can choose $\ell = -\max\{ \nu_{\frakp_{n+1}}(x) \mid x \in
    B_{\ell'} \}$.
    
    If $K$ is a number field, then $B_\ell$ is a finite set, whence a minimal element with respect
    to $\le$ clearly exists as well. If $K$ is a function field, then the infinite
    valuations~$\nu_\frakp$ for $\frakp \in S$ take on only finitely many values on $B_\ell$ since
    $B_\ell \cup \{ 0 \}$ is a finite-dimensional vector space, whence the existence of $\mu$ is
    clear, too.
    
    If $\ell$ is minimal, we have $-\nu_{\frakp_{n+1}}(\mu) = \ell$ by choice of $\mu$. Moreover, \[
    B(\tfrac{1}{\mu} \frakb, (t_1 + \nu_{\frakp_1}(\mu), \dots, t_n + \nu_{\frakp_n}(\mu), 0)) =
    \tfrac{1}{\mu} B(\frakb, (t_1, \dots, t_n, \ell)) \] and, by choice of $\mu$, we have that $1 =
    \frac{\mu}{\mu}$ lies in this set and is minimal among the non-zero elements with respect to
    $\le$. Hence, by Remark~\ref{frepremarks}~(c), the claim follows.
  \end{proof}
  
  The second result, uniqueness, shows that reducing an $f$-representation will always yield the
  same $f$-representation. This will be utilized in showing that the map $\fRep(\fraka) \to \G^n /
  \Lambda$ is injective: in the proof of Theorem~\ref{infrathm-1}, we will show that if two
  $f$-representations are mapped onto the same element of $\G^n / \Lambda$, then one is a reduction
  in the sense of the Reduction Lemma~\ref{reduction-lemma} of the other.
  
  \begin{lemma}[Uniqueness]
    \label{uniqueness-lemma}
    Let $A := ([\frakb]_\sim, (t_1, \dots, t_n)) \in \fRep(\fraka)$ and let $\mu \in K^*$ such that
    $B := ([\frac{1}{\mu} \frakb]_\sim, (t_1 + \nu_{\frakp_1}(\mu), \dots, t_n +
    \nu_{\frakp_n}(\mu))) \in \fRep(\fraka)$. Then $\abs{\mu}_\frakp = 1$ for all $\frakp \in S$,
    and hence $A = B$.
  \end{lemma}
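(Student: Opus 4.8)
The plan is to exploit how boxes scale under multiplication by $\mu$ together with the minimality built into the definition of $\fRep(\fraka)$. First I would record the elementary identity
\[ \tfrac{1}{\mu} B(\frakb, (t_1, \dots, t_n, \ell)) = B\bigl(\tfrac{1}{\mu}\frakb, (t_1 + \nu_{\frakp_1}(\mu), \dots, t_n + \nu_{\frakp_n}(\mu), \ell + \nu_{\frakp_{n+1}}(\mu))\bigr), \]
valid for every $\ell \in \G$; it is immediate from the definition of $B(\placeholder,\placeholder)$ and the fact that each $\nu_{\frakp_i}$ is a homomorphism, and it already appears in the proof of Lemma~\ref{reduction-lemma}. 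Put $\ell := -\nu_{\frakp_{n+1}}(\mu)$, so that the last coordinate on the right-hand side vanishes. Then the membership $1 \in B\bigl(\tfrac{1}{\mu}\frakb, (t_1 + \nu_{\frakp_1}(\mu), \dots, t_n + \nu_{\frakp_n}(\mu), 0)\bigr) \setminus \{0\}$ furnished by $B \in \fRep(\fraka)$ becomes, after multiplying by $\mu$, the assertion $\mu \in B(\frakb, (t_1, \dots, t_n, \ell)) \setminus \{0\}$. Since multiplication by $\mu$ is a bijection of $K^*$ preserving the preorder $\le$ (its inverse is multiplication by $\mu^{-1}$, which is also order-preserving), it follows that $\mu$ is a smallest element of $B(\frakb, (t_1, \dots, t_n, \ell)) \setminus \{0\}$ with respect to $\le$.

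Next I would determine the sign of $\ell$ using the hypothesis $A \in \fRep(\fraka)$, which says that $1$ is a smallest element of $B(\frakb, (t_1, \dots, t_n, 0)) \setminus \{0\}$ with respect to $\le$. Here $\abs{\mu}_{\frakp_{n+1}} = q^{-\nu_{\frakp_{n+1}}(\mu)\deg\frakp_{n+1}} = q^{\ell\deg\frakp_{n+1}}$ while $\abs{1}_{\frakp_{n+1}} = 1$, and $\le$ compares the $\frakp_{n+1}$-component first. If $\ell < 0$, then $B(\frakb, (t_1, \dots, t_n, \ell)) \subseteq B(\frakb, (t_1, \dots, t_n, 0))$, so $\mu$ belongs to the latter box, but $\abs{\mu}_{\frakp_{n+1}} < 1 = \abs{1}_{\frakp_{n+1}}$ forces $\mu$ strictly below $1$, contradicting the minimality of $1$; hence $\ell \ge 0$. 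If $\ell > 0$, then $B(\frakb, (t_1, \dots, t_n, 0)) \subseteq B(\frakb, (t_1, \dots, t_n, \ell))$, so $1$ belongs to the box of which $\mu$ is the smallest element, whence $\mu \le 1$; but $\abs{\mu}_{\frakp_{n+1}} > 1 = \abs{1}_{\frakp_{n+1}}$ forces $1$ strictly below $\mu$, a contradiction. Therefore $\ell = 0$, i.e.\ $\nu_{\frakp_{n+1}}(\mu) = 0$.

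With $\ell = 0$ the two boxes coincide, so $1$ and $\mu$ are both smallest elements of $B(\frakb, (t_1, \dots, t_n, 0)) \setminus \{0\}$ with respect to $\le$; thus $\mu \le 1$ and $1 \le \mu$, which forces the vectors of absolute values $(\abs{\mu}_{\frakp_{n+1}}, \abs{\mu}_{\frakp_1}, \dots, \abs{\mu}_{\frakp_n})$ and $(\abs{1}_{\frakp_{n+1}}, \abs{1}_{\frakp_1}, \dots, \abs{1}_{\frakp_n})$ to be equal, i.e.\ $\abs{\mu}_\frakp = 1$ for every $\frakp \in S$. This is the first assertion. It gives $\nu_{\frakp_i}(\mu) = 0$ for $i = 1, \dots, n$, so the $\G^n$-component of $B$ equals $(t_1, \dots, t_n)$; and since $\frakb = \mu \cdot \tfrac{1}{\mu}\frakb$ with $\abs{\mu}_\frakp = 1$ for all $\frakp \in S$ we have $\frakb \sim \tfrac{1}{\mu}\frakb$, so the $\Red(\fraka)/_\sim$-components of $A$ and $B$ agree as well; hence $A = B$. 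I expect the only truly delicate point to be keeping the sign conventions straight, namely that enlarging a box in the $\frakp_{n+1}$-direction corresponds to increasing $\ell$ and that the case $\ell < 0$ is genuinely impossible rather than merely atypical; the remainder is a formal consequence of the definitions of $\le$ and of $\fRep(\fraka)$.
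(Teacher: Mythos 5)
Your proof is correct and follows essentially the same route as the paper's: both translate the condition $B \in \fRep(\fraka)$ via the scaling identity into minimality of $\mu$ in $B(\frakb, (t_1,\dots,t_n,\ell))\setminus\{0\}$ with $\ell = -\nu_{\frakp_{n+1}}(\mu)$, then compare with the minimality of $1$ coming from $A \in \fRep(\fraka)$. The only cosmetic difference is that you argue by explicit case analysis on the sign of $\ell$ to force $\ell = 0$ before concluding $\mu \le 1 \le \mu$, whereas the paper sidesteps the cases by observing that both $1$ and $\mu$ remain minimal in the single box $B(\frakb,(t_1,\dots,t_n,\max\{0,\ell\}))\setminus\{0\}$; the two are logically equivalent and of comparable length.
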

  
  \begin{proof}
    As $1 \in B(\frac{1}{\mu} \frakb, (t_1 + \nu_{\frakp_1}(\mu), \dots, t_n + \nu_{\frakp_n}(\mu),
    0))$, we get $\mu = \mu \cdot 1 \in \mu B(\frac{1}{\mu} \frakb, (t_1 + \nu_{\frakp_1}(\mu),
    \dots, t_n + \nu_{\frakp_n}(\mu), 0)) \setminus \{ 0 \} = B(\frakb, (t_1, \dots, t_n,
    -\nu_{\frakp_{n+1}}(\mu))) \setminus \{ 0 \}$. Hence, $\mu$ is minimal in $B(\frakb, (t_1,
    \dots, t_n, -\nu_{\frakp_{n+1}}(\mu))) \setminus \{ 0 \}$ with respect to $\le$. By the choice
    of $\le$, it is also minimal in $B(\frakb, (t_1, \dots, t_n, \max\{ 0, -\nu_{\frakp_{n+1}}(\mu)
    \})) \setminus \{ 0 \}$; but then, by the same argument, $1$ is minimal with respect to $\le$ in
    the same set. Thus, we get $\mu \le 1 \le \mu$, which shows that $\abs{\mu}_\frakp = 1$ for
    every $\frakp \in S$.
  \end{proof}
  
  Now we will state our main result in this section, which asserts that the set $\fRep(\fraka)$ as
  given in Definition~\ref{fRepDefForGlobalField} indeed defines a set of $f$-representations for
  $(X^\fraka, d^\fraka)$ in the sense of Definition~\ref{reductionmap-frep-definition}. This can be
  seen as generalizing Proposition~\ref{onedimfRepBijection} for the one-dimensional infrastructure
  case. Note that the set $X^\fraka = \Red(\fraka)/_\sim$ is possibly infinite if $K$ is a function
  field and $k$ is not finite.
  
  \begin{theorem}[Infrastructure, Part~I: Correspondence between $f$-representations and
    $\G^n/\Lambda$]
    \label{infrathm-1}
    The map \[ \Phi^\fraka : \fRep(\fraka) \to \G^n / \Lambda, \qquad ([\frakb]_\sim, t) \mapsto
    d^\fraka(\frakb) + t \] is a bijection, and $([\frakb]_\sim, (0, \dots, 0)) \in \fRep(\fraka)$
    for every $[\frakb]_\sim \in \Red(\fraka) /_\sim$.
  \end{theorem}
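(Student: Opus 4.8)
The plan is to establish the two assertions in the theorem separately, using the two lemmata just proved. The second assertion, that $([\frakb]_\sim,(0,\dots,0))\in\fRep(\fraka)$ for every $[\frakb]_\sim\in\Red(\fraka)/_\sim$, is essentially immediate from Remark~\ref{frepremarks}~(c), so the substance lies in showing that $\Phi^\fraka$ is a bijection.

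\emph{Surjectivity.} Given $s\in\G^n/\Lambda$, pick any ideal $\frakb$ equivalent to $\fraka$ — for instance $\frakb=\fraka$ itself — and choose $t_1,\dots,t_n\in\G$ with $(t_1,\dots,t_n)+\Lambda = s - d^\fraka([\frakb]_\sim)$; this is possible precisely because $d^\fraka([\frakb]_\sim)\in\G^n/\Lambda$ and $\G^n\to\G^n/\Lambda$ is surjective. Now apply the Reduction Lemma~\ref{reduction-lemma} to $(\frakb,(t_1,\dots,t_n))$: it produces $\ell\in\G$ and a smallest element $\mu$ of $B(\frakb,(t_1,\dots,t_n,\ell))\setminus\{0\}$ with respect to $\le$, such that $([\tfrac1\mu\frakb]_\sim,(t_1+\nu_{\frakp_1}(\mu),\dots,t_n+\nu_{\frakp_n}(\mu)))\in\fRep(\fraka)$. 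It remains to check that this $f$-representation maps to $s$ under $\Phi^\fraka$. Since $d^\fraka([\tfrac1\mu\frakb]_\sim)=\Psi(\mu)+d^\fraka([\frakb]_\sim)$ — here one uses that $\tfrac1\mu\frakb = \tfrac1{\mu'}\fraka$ for the appropriate relative generator $\mu'$, so that $\Psi$ picks up exactly the valuation shift — and $\Psi(\mu)=(-\nu_{\frakp_1}(\mu),\dots,-\nu_{\frakp_n}(\mu))$, the image is
\[
d^\fraka([\tfrac1\mu\frakb]_\sim) + (t_1+\nu_{\frakp_1}(\mu),\dots,t_n+\nu_{\frakp_n}(\mu))
 = d^\fraka([\frakb]_\sim) + (t_1,\dots,t_n) = s.
\]

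\emph{Injectivity.} Suppose $A=([\frakb]_\sim,(t_1,\dots,t_n))$ and $A'=([\frakb']_\sim,(t_1',\dots,t_n'))$ are two $f$-representations with $\Phi^\fraka(A)=\Phi^\fraka(A')$. Write $\frakb'=\tfrac1\mu\frakb$ for a suitable $\mu\in K^*$ (possible because $\frakb,\frakb'$ lie in the same ideal class); the equality of images then forces $(t_1',\dots,t_n')+\Lambda = (t_1+\nu_{\frakp_1}(\mu),\dots,t_n+\nu_{\frakp_n}(\mu))+\Lambda$, and after absorbing a unit into $\mu$ (replacing $\mu$ by $\varepsilon\mu$ for an appropriate $\varepsilon\in\calO_K^*$, which changes neither $\frakb'$ as an ideal nor the $\sim$-class but adjusts $\Psi(\mu)$ by the corresponding lattice vector) we may assume $t_i' = t_i+\nu_{\frakp_i}(\mu)$ on the nose. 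Now $A'$ has exactly the form of the tuple $B$ in the Uniqueness Lemma~\ref{uniqueness-lemma}, so that lemma yields $\abs{\mu}_\frakp=1$ for all $\frakp\in S$, hence $\frakb\sim\frakb'$ and $[\frakb]_\sim=[\frakb']_\sim$, and then $t_i'=t_i$ since $\nu_{\frakp_i}(\mu)=0$. Thus $A=A'$.

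\emph{Main obstacle.} The delicate point is the bookkeeping with the equivalence relation $\sim$ and the unit lattice $\Lambda$ in the injectivity argument: one must be careful that "replacing $\mu$ by $\varepsilon\mu$" is legitimate and that the resulting $\mu$ genuinely satisfies the hypotheses of the Uniqueness Lemma — in particular that $[\tfrac1\mu\frakb]_\sim$ does not depend on the chosen representative. Here the well-definedness statements from Remark~\ref{frepremarks}~(a) and the discussion preceding Proposition~\ref{distanceInjectivityProp} (that $\tfrac1h\fraka\mapsto\Psi(h)+\Lambda$ is well-defined modulo $\sim$) are exactly what is needed. The surjectivity direction is comparatively routine once the Reduction Lemma is in hand, the only subtlety being the computation that $d^\fraka$ transforms correctly under division by $\mu$.
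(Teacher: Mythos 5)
Your proof is correct and follows essentially the same route as the paper: the second assertion is Remark~\ref{frepremarks}~(c) (the paper cites~(b), an apparent typo), surjectivity is the Reduction Lemma~\ref{reduction-lemma} plus a straightforward cancellation of $\Psi(\mu)$ against the shifted $t$-components, and injectivity absorbs a unit into $\mu$ so that the two $f$-representations land in the exact form required by the Uniqueness Lemma~\ref{uniqueness-lemma}. The step you flag as delicate — replacing $\mu$ by a unit multiple to turn congruence modulo $\Lambda$ into equality — is handled in the paper by the single definition $\mu'' := \mu^{-1}\mu'\varepsilon$, which is the same maneuver.
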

  
  Note that this result generalizes the injectivity of $d^\fraka$ in
  Proposition~\ref{distanceInjectivityProp}: for that result, it suffices to note that the map
  $\Red(\fraka)/_\sim \to \fRep(\fraka)$, $[\frakb]_\sim \mapsto ([\frakb]_\sim, (0, \dots, 0))$ is
  an injection.
  
  \begin{proof}[Proof of Theorem~\ref{infrathm-1}.]
    The second part is Remark~\ref{frepremarks}~(b). For the injectivity of $\Phi^\fraka$, let $A =
    ([\frakb]_\sim, (t_1, \dots, t_n)), A' = ([\frakb']_\sim, (t_1', \dots, t_n')) \in
    \fRep(\fraka)$ with $\Phi^\fraka(A) = \Phi^\fraka(A')$. Write $\frakb = \frac{1}{\mu} \fraka$
    and $\frakb' = \frac{1}{\mu'} \fraka$. Then there exists $\varepsilon \in \calO_K^*$ with
    $\Psi(\mu) + (t_1, \dots, t_n) = \Psi(\mu') + (t_1', \dots, t_n') + \Psi(\varepsilon)$. Define
    $\mu'' := \mu^{-1} \mu' \varepsilon$; then $t_i + \nu_{\frakp_i}(\mu'') = t_i'$ and
    $\frac{1}{\mu''} \frakb = \frakb'$, whence by the Uniqueness Lemma~\ref{uniqueness-lemma}, we
    get $A = A'$.
    
    For the surjectivity of $\Phi^\fraka$, let $(t_1, \dots, t_n) + \Lambda \in \G^n /
    \Lambda$. Then by the Reduction Lemma~\ref{reduction-lemma}, there exists $\mu \in \fraka$ such
    that $A'' = ([\frac{1}{\mu} \fraka]_\sim, (t_1 + \nu_{\frakp_1}(\mu), \dots, t_n +
    \nu_{\frakp_n}(\mu))) \in \fRep(\fraka)$. Now $\Phi^\fraka(A'') = \Psi(\mu) + (t_1 +
    \nu_{\frakp_1}(\mu), \dots, t_n + \nu_{\frakp_n}(\mu)) + \Lambda = (t_1, \dots, t_n) + \Lambda$,
    as we wanted to show.
  \end{proof}
  
  So far, we have obtained a set $X^\fraka = \Red(\fraka) /_\sim$ of classes of reduced ideals
  equivalent to $\fraka$, together with a distance map $d^\fraka : X^\fraka \to \G^n / \Lambda$ and
  a set of $f$-representations $\fRep(\fraka) \subset X^\fraka \times \G^n / \Lambda$ for
  $(X^\fraka, d^\fraka)$. This means that the map \[ \Phi^\fraka : \fRep(\fraka) \to \G^n / \Lambda,
  \qquad (x, t) \mapsto d^\fraka(x) + t \] is a bijection, and we know that $(x, 0) \in
  \fRep(\fraka)$ for all $x \in X^\fraka$. This allows us to define a reduction map \[
  \reduce^\fraka : \G^n / \Lambda \to X^\fraka \] for $(X^\fraka, d^\fraka)$ as in the previous
  section, by taking $\reduce^\fraka(v)$ to be the first component of $(\Phi^\fraka)^{-1}(v) \in
  \fRep(\fraka)$. Therefore, assuming $K$ is a global field, $(X^\fraka, d^\fraka, \reduce^\fraka)$
  is an \emph{$n$-dimensional infrastructure}, and we obtain a giant step \[ \gs^\fraka(x, x') :=
  \reduce^\fraka(d^\fraka(x) + d^\fraka(x')), \qquad x, x' \in X^\fraka. \] Moreover, as in the
  one-dimensional case, we can use $\Phi^\fraka$ to turn $\fRep(\fraka)$ into an abelian group by
  pulling back the group operation from $\G^n / \Lambda$: for $A, B \in \fRep(\fraka)$, define \[ A
  \oplus_\fraka B := (\Phi^\fraka)^{-1}(\Phi^\fraka(A) + \Phi^\fraka(B)). \] Then $(\fRep(\fraka),
  \oplus_\fraka)$ is an abelian group isomorphic to $\G^n / \Lambda$ via $\Phi^\fraka$. We denote
  this group operation by $\oplus_\fraka$ and not by $+$ since in the next section, we will equip
  $\bigcup_{\fraka \in \Id(\calO_K)} \fRep(\fraka)$ with a group operation named $+$ which is
  related to the (Arakelov) divisor class group of $K$. Now $(\fRep(\calO_K), \oplus_{\calO_K})$
  will be a subgroup of $\bigcup_{\fraka \in \Id(\calO_K)} \fRep(\fraka)$, but no other
  $(\fRep(\fraka), \oplus_\fraka)$ will be a subgroup. Therefore, we reserve the symbol $+$ for the
  operation defined in the next section.

  If $\fraka = \calO_K$, then the results in the next section allow us to explicitly describe the
  group operation on $\fRep(\calO_K)$. It is essentially ideal multiplication, followed by a
  reduction: if $A = ([\frakb]_\sim, (t_1, \dots, t_n)), B = ([\frakb']_\sim, (t_1', \dots, t_n'))
  \in \fRep(\calO_K)$, we can apply the Reduction Lemma~\ref{reduction-lemma} to $(\frakb \frakb',
  (t_1 + t_1', \dots, t_n + t_n'))$. In case $\fraka \neq \calO_K$, one can still describe the group
  operation~$\oplus_\fraka$ on $\fRep(\fraka)$, but one cannot use simple ideal multiplication since
  $\frakb \frakb'$ will not be in the ideal class of $\fraka$ as soon as $\fraka$ is not a principal
  ideal; and even if $\fraka$ is principal, distances will be added incorrectly. The correct formula
  is given as follows:
  
  \begin{proposition}
    \label{fRepArithmeticOplus}
    Let $A = ([\frakb]_\sim, (t_1, \dots, t_n)), B = ([\frakb']_\sim, (t_1', \dots, t_n')) \in
    \fRep(\fraka)$. Apply the Reduction Lemma~\ref{reduction-lemma} to $(\frakb \frakb' \fraka^{-1},
    (t_1 + t_1', \dots, t_n + t_n'))$, and denote the result by $C$. Then $C \in \fRep(\fraka)$ and
    $A \oplus_\fraka B = C$.
  \end{proposition}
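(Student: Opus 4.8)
The plan is to reduce the statement to the bijectivity of $\Phi^\fraka$ established in Theorem~\ref{infrathm-1}. Since, by definition, $A \oplus_\fraka B = (\Phi^\fraka)^{-1}(\Phi^\fraka(A) + \Phi^\fraka(B))$, it suffices to verify that $C \in \fRep(\fraka)$ and that $\Phi^\fraka(C) = \Phi^\fraka(A) + \Phi^\fraka(B)$; injectivity of $\Phi^\fraka$ then forces $C = A \oplus_\fraka B$, and, incidentally, shows that $C$ does not depend on the choice of the smallest element made inside the Reduction Lemma.

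First I would fix relative generators, writing $\frakb = \frac{1}{\mu} \fraka$ and $\frakb' = \frac{1}{\mu'} \fraka$ with $\mu, \mu' \in K^*$. Then $\frakb \frakb' \fraka^{-1} = \frac{1}{\mu \mu'} \fraka$ is a non-zero fractional ideal in the ideal class of $\fraka$, so the Reduction Lemma~\ref{reduction-lemma} applies to $(\frakb \frakb' \fraka^{-1}, (t_1 + t_1', \dots, t_n + t_n'))$: choosing $\ell \in \G$ minimal with $B(\frakb \frakb' \fraka^{-1}, (t_1 + t_1', \dots, t_n + t_n', \ell)) \setminus \{ 0 \}$ non-empty and $\lambda$ a smallest element of that set with respect to $\le$, we obtain
\[ C = \bigl( [\tfrac{1}{\lambda} \frakb \frakb' \fraka^{-1}]_\sim, (t_1 + t_1' + \nu_{\frakp_1}(\lambda), \dots, t_n + t_n' + \nu_{\frakp_n}(\lambda)) \bigr) \in \fRep(\fraka). \]
This is already the first assertion.

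It remains to match up the images under $\Phi^\fraka$. Because $\frac{1}{\lambda} \frakb \frakb' \fraka^{-1} = \frac{1}{\lambda \mu \mu'} \fraka$ and $\Psi : (K^*, \cdot) \to (\G^n, +)$ is a homomorphism, the first component of $C$ has $d^\fraka$-value $\Psi(\lambda) + \Psi(\mu) + \Psi(\mu') + \Lambda$, so
\[ \Phi^\fraka(C) = \Psi(\lambda) + \Psi(\mu) + \Psi(\mu') + (t_1 + t_1' + \nu_{\frakp_1}(\lambda), \dots, t_n + t_n' + \nu_{\frakp_n}(\lambda)) + \Lambda. \]
Using $\Psi(\lambda) = (-\nu_{\frakp_1}(\lambda), \dots, -\nu_{\frakp_n}(\lambda))$, the $\lambda$-terms cancel and this equals $\Psi(\mu) + \Psi(\mu') + (t_1 + t_1', \dots, t_n + t_n') + \Lambda$. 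On the other hand, $\Phi^\fraka(A) + \Phi^\fraka(B) = \bigl( \Psi(\mu) + (t_1, \dots, t_n) \bigr) + \bigl( \Psi(\mu') + (t_1', \dots, t_n') \bigr) + \Lambda$ is the same element. Hence $\Phi^\fraka(C) = \Phi^\fraka(A) + \Phi^\fraka(B)$, and applying $(\Phi^\fraka)^{-1}$ completes the proof.

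I do not expect a serious obstacle here; most of the argument is bookkeeping now that Theorem~\ref{infrathm-1} and the Reduction Lemma are available. The one point requiring attention is that the ideal to be reduced must be $\frakb \frakb' \fraka^{-1}$ rather than $\frakb \frakb'$: dividing by $\fraka$ once is exactly what keeps the result in the ideal class of $\fraka$ with $\mu \mu'$ as relative generator, which is what makes the distances add correctly. Beyond that, one only needs to keep the sign convention linking $\Psi$ to the valuations $\nu_{\frakp_i}$ straight.
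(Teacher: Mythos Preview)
Your proof is correct and follows essentially the same route as the paper: write $\frakb = \tfrac{1}{\mu}\fraka$, $\frakb' = \tfrac{1}{\mu'}\fraka$, observe that $\frakb\frakb'\fraka^{-1} = \tfrac{1}{\mu\mu'}\fraka$ lies in the ideal class of $\fraka$, and verify $\Phi^\fraka(C) = \Phi^\fraka(A) + \Phi^\fraka(B)$. The paper compresses your explicit $\lambda$-cancellation into the single line $\Phi^\fraka(C) = d^\fraka(\frakb\frakb'\fraka^{-1}) + (t_1+t_1',\dots,t_n+t_n')$, tacitly using that reduction preserves the $\Phi^\fraka$-value (as shown in the surjectivity part of Theorem~\ref{infrathm-1}); your version unpacks this step but is otherwise identical.
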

  
  \begin{proof}
    Write $\frakb = \frac{1}{\mu} \fraka$ and $\frakb' = \frac{1}{\mu'} \fraka$. Then $\frakb
    \frakb' \fraka^{-1} = \frac{1}{\mu \mu'} \fraka$ lies in the ideal class of $\fraka$. We can now
    conclude with
    \begin{align*}
      \Phi^\fraka(C) ={} & d^\fraka(\frakb \frakb' \fraka^{-1}) + (t_1 + t_1', \dots, t_n + t_n') \\
      {}={} & d^\fraka(\frakb) + (t_1, \dots, t_n) + d^\fraka(\frakb') + (t_1', \dots, t_n') =
      \Phi^\fraka(A) + \Phi^\fraka(B). \qedhere
    \end{align*}
  \end{proof}
  
  This result is similar to Remark~\ref{frepOneDimArithmetic} and
  Example~\ref{realquadratic-example} in the one-dimensional case: the reduced ideals are multiplied
  and the product is then reduced. In case $\fraka \neq \calO_K$, a correction factor needs to be
  multiplied to the product of the ideals.
  
  In this section, we saw how to construct a set of $f$-representations~$\fRep(\fraka)$ and,
  therefore, a reduction map~$\reduce^\fraka$ for $(X^\fraka, d^\fraka) = (\Red(\fraka)/_\sim,
  d^\fraka)$, thereby turning this pair into an $n$-dimensional infrastructure~$(X^\fraka, d^\fraka,
  \reduce^\fraka)$. We also saw how to explicitly compute the group operation induced by the
  bijection $\fRep(\fraka) \to \G^n / \Lambda$ in terms of ideal multiplication followed by
  reduction. This also shows how the giant step operation~$\gs([\frakb]_\sim, [\frakb']_\sim)$ can
  be computed, by ignoring the $t$-part of the resulting $f$-representation~$([\frakb]_\sim, 0)
  \oplus_\fraka ([\frakb']_\sim, 0)$. This operation generalizes Shanks' original approach as
  sketched in Example~\ref{realquadratic-example}.
  
  \section{Relation to the Divisor Class Group}
  \label{relationtodcg}
  
  In this section, we want to relate the set of all $f$-representations, \[ \fRep(K) :=
  \bigcup_{\fraka \in \Id(\calO_K)} \fRep(\fraka), \] to the (Arakelov) divisor class
  group~$\Pic^0(K)$. In case $K$ is a number field, or in case $K$ is a function field and $\deg
  \frakp_{n+1} = 1$, we obtain an isomorphism $\fRep(K) \to \Pic^0(K)$. In case $K$ is a function
  field and $\deg \frakp_{n+1} > 1$, we can identify a subset of $\fRep(K)$ with $\Pic^0(K)$. We
  show that we can then extend $\Pic^0(K)$ to obtain a group which is isomorphic to
  $\fRep(K)$. Finally, we show how to perform effective arithmetic in $\fRep(K)$.
  
  To motivate the fact that there is a relationship between our infrastructures $(X^\fraka,
  d^\fraka, \reduce^\fraka)$ together with $\fRep(\fraka)$ and the (Arakelov) divisor class group
  $\Pic^0(K)$, we first consider the aforementioned special case. Assume for a moment that $\deg
  \frakp_{n+1} = 1$, or that $K$ is a number field. In this case, we have the short exact
  sequence \[ \xymatrix{ 0 \ar[r] & T \ar[r] & \Pic^0(K) \ar[r] & \Pic(\calO_K) \ar[r] & 0, } \] and
  we have $T \cong \G^n / \Lambda$. Moreover, we have a representation of $\G^n / \Lambda$ by
  $\fRep(\fraka)$ for every $\fraka \in \Id(\calO_K)$, which consists of all $f$-representations
  whose reduced ideals range over all reduced ideals in the ideal class of $\fraka$. By the short
  exact sequence, clearly the (Arakelov) divisor class group~$\Pic^0(K)$ is covered by
  $\abs{\Pic(\calO_K)}$~copies of $\G^n / \Lambda$, whence one might hope that $\Pic^0(K)$ can be
  described in a nice way using $\fRep(K) = \bigcup_{\fraka \in \Id(\calO_K)} \fRep(\fraka)$. This
  turns out to be the case. In fact, Paulus and R\"uck already showed this for the special case of
  the infrastructure obtained from a real hyperelliptic curve in \cite{paulus-rueck}.
  
  In the general case, i.e.\ if $\deg \frakp_{n+1}$ is not necessarily $1$, $T$ can be embedded into
  $\G^n / \Lambda$, but might not cover the entire set, and the map $\Pic^0(K) \to \Pic(\calO_K)$
  might not be surjective. This can only happen in the function field case. It would be desirable to
  have a short exact sequence
  \begin{equation}\label{desiredSES}\tag{$\ast$}
    \xymatrix{ 0 \ar[r] & \G^n / \Lambda \ar[r] & ? \ar[r] & \Pic(\calO_K) \ar[r] & 0 }
  \end{equation}
  for all function fields, into which the exact sequence
  
  \begin{equation}\label{simplerSES}\tag{$\ast\ast$}
    \xymatrix{ 0 \ar[r] & T \ar[r] & \Pic^0(K) \ar[r] & \Pic(\calO_K) }
  \end{equation}
  embeds in a natural way. If $D$ is a divisor with $\deg D \neq 0$, one obtains an exact
  sequence \[ \xymatrix@C-0.3cm{ 0 \ar[r] & \Pic^0(K) \ar[r] & \Pic(K) / \ggen{ [D] } \ar[r] & (\deg
  \frakp \mid \frakp \in \calP_K) \Z / (\deg D) \Z \ar[r] & 0. } \] For the right choice of $D$, we
  obtain that $\Pic(K) / \ggen{ [D] }$ is the right replacement for the ``$?$'' in
  Equation~\eqref{desiredSES}. The exact relationship between the exact sequences in
  Equations~\eqref{desiredSES} and \eqref{simplerSES} will be described later in
  Proposition~\ref{commutativediagramprop}.
  
  We now state the main result for this section, which identifies the set of $f$-representations
  with the Arakelov divisor class group, or with an extension of $\Pic^0(K)$.
  
  \begin{theorem}[Infrastructure, Part~II: Relating $f$-representations to the divisor class
    group]\hfill
    \label{infrathm-2}
    \begin{enuma}
      \item Let $K$ be a number field. Then the following map is a bijection:
      \begin{align*}
        \Phi :{} & \fRep(K) \to \Pic^0(K), \\
        & ([\frakb]_\sim, (t_1, \dots, t_n)) \mapsto \biggl[ \divisor(\frakb) + \sum_{i=1}^n t_i
        \frakp_i - \tfrac{\deg \divisor(\frakb) + \sum_{i=1}^n t_i \deg \frakp_i}{\deg \frakp_{n+1}}
        \frakp_{n+1} \biggr]
      \end{align*}
      \item Let $K$ be a function field. Then the following map is a bijection:
      \begin{align*}
        \Phi :{} & \fRep(K) \to \Pic(K) / \ggen{ [\frakp_{n+1}] }, \\
        & ([\frakb]_\sim, (t_1, \dots, t_n)) \mapsto \biggl[ \divisor(\frakb) + \sum_{i=1}^n t_i
        \frakp_i \biggr] + \ggen{ [\frakp_{n+1}] }
      \end{align*}
    \end{enuma}
  \end{theorem}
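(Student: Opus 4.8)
The plan is to treat both parts at once by factoring $\Phi$, on each piece of the disjoint union $\fRep(K)=\bigsqcup_{[\fraka]\in\Pic(\calO_K)}\fRep(\fraka)$, through the bijection $\Phi^\fraka\colon\fRep(\fraka)\to\G^n/\Lambda$ of Theorem~\ref{infrathm-1}. The computational core is the bookkeeping identity for a reduced ideal $\frakb=\tfrac1\mu\fraka$: since $\divisor$ is a right inverse of $\Div(K)\to\Id(\calO_K)$ and $(\mu)=\sum_\frakp\nu_\frakp(\mu)\frakp$, one gets $\divisor(\frakb)=\divisor(\fraka)+(\mu)-\sum_{i=1}^{n+1}\nu_{\frakp_i}(\mu)\frakp_i$, hence, with $v:=(t_1,\dots,t_n)+\Psi(\mu)$ a representative of $\Phi^\fraka([\frakb]_\sim,(t_1,\dots,t_n))\in\G^n/\Lambda$,
\[
  \divisor(\frakb)+\sum_{i=1}^{n} t_i\frakp_i = \divisor(\fraka)+(\mu)+\sum_{i=1}^{n} v_i\frakp_i-\nu_{\frakp_{n+1}}(\mu)\,\frakp_{n+1}.
\]
In the function field case the $\frakp_{n+1}$-term dies modulo $\ggen{[\frakp_{n+1}]}$ and $(\mu)$ dies modulo $\Princ(K)$, so $\Phi$ restricted to $\fRep(\fraka)$ equals $\Psi_\fraka\circ\Phi^\fraka$, where $\Psi_\fraka\colon\G^n/\Lambda\to\Pic(K)/\ggen{[\frakp_{n+1}]}$ sends $v\mapsto[\divisor(\fraka)+\sum_{i=1}^n v_i\frakp_i]+\ggen{[\frakp_{n+1}]}$. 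In the number field case I would introduce the homomorphism $\pi_0\colon\Div(K)\to\Div^0(K)$, $E\mapsto E-\tfrac{\deg E}{\deg\frakp_{n+1}}\frakp_{n+1}$, which is the identity on $\Div^0(K)$; then $\Phi([\frakb]_\sim,t)=[\pi_0(\divisor(\frakb)+\sum t_i\frakp_i)]$, and since $\pi_0$ annihilates $\frakp_{n+1}$ and fixes the degree-zero divisor $(\mu)$, one gets $\Phi|_{\fRep(\fraka)}=\widetilde\Psi_\fraka\circ\Phi^\fraka$ with $\widetilde\Psi_\fraka(v)=[\pi_0(\divisor(\fraka)+\sum_{i=1}^n v_i\frakp_i)]$. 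A short check — using that a unit of $\calO_K$ has trivial valuation at every finite place — shows $\Psi_\fraka$ and $\widetilde\Psi_\fraka$ are well-defined on $\G^n/\Lambda$ and independent of the chosen representative of $[\frakb]_\sim$, so it remains to prove that these maps are injective and that, as $[\fraka]$ runs over $\Pic(\calO_K)$, their images partition the target.

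For surjectivity I would take a class in the target, represent it by a divisor $D$ (in $\Div^0(K)$ for the number field case), split off the part $D_f$ supported outside $S$, and write $D_f=\divisor(\fraka)$ for a suitable $\fraka\in\Id(\calO_K)$, which is possible because $\divisor$ surjects onto divisors supported outside $S$. Reducing modulo $\ggen{[\frakp_{n+1}]}$ (function field case) or applying $\pi_0$ (number field case, where $\pi_0$ is the identity on $\Div^0(K)$ and kills the $\frakp_{n+1}$-component) exhibits the class as $\Psi_\fraka(v)$ (resp.\ $\widetilde\Psi_\fraka(v)$) with $v$ the vector of $\frakp_1,\dots,\frakp_n$-coefficients of $D$. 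Since $\Red(\fraka)\neq\emptyset$ (Proposition~\ref{distanceInjectivityProp} or the Reduction Lemma~\ref{reduction-lemma}) and $\Phi^\fraka$ is onto $\G^n/\Lambda$, some $f$-representation in $\fRep(\fraka)$ is sent to $v$, hence to the given class under $\Phi$.

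For injectivity, suppose $\Phi(A)=\Phi(A')$ with $A\in\fRep(\fraka)$ and $A'\in\fRep(\fraka')$. Pushing to $\Pic(\calO_K)$ via the map descended from $\Div(K)\to\Id(\calO_K)$ — under which $\frakp_{n+1}$, each $\frakp_i$ with $i\le n$, and (in the number field case) the degree-normalizing divisor all die — sends $\Phi(A)$ to the ideal class $[\frakb]=[\fraka]$, forcing $[\fraka]=[\fraka']$; so I may take $\fraka=\fraka'$. As $\Phi^\fraka$ is injective, it then suffices that $\Psi_\fraka$ (resp.\ $\widetilde\Psi_\fraka$) is injective: if $v,v'$ have the same image, then $\sum_{i=1}^n(v_i-v_i')\frakp_i$ differs from a multiple of $\frakp_{n+1}$ by a principal divisor $(h)$; being supported only at places in $S$, this forces $h$ to be a unit $\varepsilon\in\calO_K^*$, and comparing $\frakp_i$-coefficients in $(\varepsilon)=\sum_{i=1}^{n+1}\nu_{\frakp_i}(\varepsilon)\frakp_i$ yields $v-v'=-\Psi(\varepsilon)\in\Lambda$, i.e.\ $v=v'$ in $\G^n/\Lambda$. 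The main obstacle I expect is getting the bookkeeping identity and the two factorizations exactly right, especially tracking the $\frakp_{n+1}$-component and the degree normalization through the various quotients; once that is in place, surjectivity and injectivity are both short, the crucial input for the latter being that a principal divisor supported entirely on $S$ is the divisor of a unit.
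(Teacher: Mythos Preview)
Your argument is correct but organized differently from the paper's. The paper proceeds directly: for injectivity it assumes $\Phi(A)=\Phi(A')$, writes down the resulting divisor equation $\divisor(\frakb)+\sum t_i\frakp_i=\divisor(\frakb')+\sum t_i'\frakp_i+(h)+\ell\frakp_{n+1}$, reads off $\frakb=\tfrac{1}{h}\frakb'$ and $t_i=t_i'+\nu_{\frakp_i}(h)$, and invokes the Uniqueness Lemma~\ref{uniqueness-lemma} to force $\abs{h}_\frakp=1$ for all $\frakp\in S$; for surjectivity it decomposes a representing divisor as $\divisor(\fraka)+\sum t_i\frakp_i+\ell\frakp_{n+1}$ and applies the Reduction Lemma~\ref{reduction-lemma}. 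You instead factor $\Phi|_{\fRep(\fraka)}=\Psi_\fraka\circ\Phi^\fraka$ through the bijection of Theorem~\ref{infrathm-1} (which was itself proved via those same two lemmas), and then reduce everything to showing that each $\Psi_\fraka$ is injective and that their images partition the target as $[\fraka]$ ranges over $\Pic(\calO_K)$. Your route makes the fibered structure over $\Pic(\calO_K)$ explicit---in effect anticipating the commutative diagram of Proposition~\ref{commutativediagramprop}---and isolates the key fact that a principal divisor supported on $S$ comes from a unit; the paper's route is shorter since it bypasses the intermediate factorization and handles both the ideal-class and the $\G^n/\Lambda$ coordinates in a single application of the Uniqueness Lemma.
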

  
  \begin{proof}
    Clearly, the divisors in the definition of $\Phi$ in the number field case are all of
    degree~zero. Hence, one can treat both cases at the same time by ignoring the valuations of the
    divisors at $\frakp_{n+1}$. First, note that the maps are well-defined, since if $\frakb$ is
    replaced by $h \frakb$ for some $h \in K^*$ with $\abs{h}_\frakp = 1$ for all $\frakp \in S$,
    then $\divisor(\frakb)$ is replaced by $\divisor(\frakb) - (h) = \divisor(h \frakb)$.
    
    To show injectivity, let $A = ([\frakb]_\sim, (t_1, \dots, t_n))$ and $A' = ([\frakb']_\sim,
    (t_1', \dots, t_n')) \in \fRep(K)$ with $\Phi(A) = \Phi(A')$, i.e.\ let $h \in K^*$ and $\ell
    \in \G$ with \[ \divisor(\frakb) + \sum_{i=1}^n t_i \frakp_i = \divisor(\frakb') + \sum_{i=1}^n
    t_i' \frakp_i + (h) + \ell \frakp_{n+1}. \] This gives $\frac{1}{h} \frakb' = \frakb$ and $t_i =
    t_i' + \nu_{\frakp_i}(h)$. But then, $A = ([\frac{1}{h} \frakb'], (t_1' + \nu_{\frakp_1}(h),
    \dots, t_n' + \nu_{\frakp_n}(h)))$, whence by the Uniqueness Lemma~\ref{uniqueness-lemma} we get
    $\abs{h}_\frakp = 1$ for every $\frakp \in S$. But this implies $A = A'$. Therefore, $\Phi$ is
    injective.
    
    For surjectivity, let $[D] \in \Pic^0(K)$, respectively, $[D] \in \Pic(K) / \ggen{
    [\frakp_{n+1}] }$. Write $D = \divisor(\fraka) + \sum_{i=1}^n t_i \frakp_i + \ell \frakp_{n+1}$
    for $\fraka \in \Id(\calO_K)$, $t_1, \dots, t_n, \ell \in \G$. By Reduction
    Lemma~\ref{reduction-lemma}, there exists a $\mu \in \frakb$ such that $B = ([\frac{1}{\mu}
    \fraka]_\sim, (t_1 + \nu_{\frakp_1}(\mu), \dots, t_n + \nu_{\frakp_n}(\mu))) \in \fRep(K)$, and,
    up to $\frakp_{n+1}$, the divisor in $\Phi(B)$ equals \[ \divisor\bigl(\tfrac{1}{\mu}
    \fraka\bigr) + \sum_{i=1}^n (t_i + \nu_{\frakp_i}(\mu)) \frakp_i = \divisor(\fraka) +
    \sum_{i=1}^n t_i \frakp_i + (\mu) - \nu_{\frakp_{n+1}}(\mu) \frakp_{n+1}, \] i.e.\ $\Phi(B) =
    [D]$.
  \end{proof}
  
  Note that this gives, in particular, an embedding of $\Red(K)/_\sim$ into $\Pic^0(K)$,
  respectively, $\Pic(K) / \ggen{ [\frakp_{n+1}] }$, where $\Red(K) = \bigcup_{\fraka \in
  \Id(\calO_K)} \Red(\fraka)$. In the case of number fields, Schoof gave a similar embedding in
  \cite{schoofArakelov}; more precisely, he embedded $\Red(K)$ in the \emph{oriented} Arakelov
  divisor class group~$\widetilde{\Pic}{}^0(K)$, which is a cover of $\Pic^0(K)$. Moreover, his
  embedding assigns different valuations for the infinite places. Our embedding has the advantage
  that it works in a very similar way for both number fields and function fields. In the case of
  real hyperelliptic function fields, our embedding is the same as the one by Paulus and R\"uck
  \cite[Theorem~4.2]{paulus-rueck}. Moreover, in part~(b) of the theorem, the divisor whose class is
  taken is reduced along $\frakp_{n+1}$ in the sense of He\ss\ \cite{hessRR}. In case $\deg
  \frakp_{n+1} = 1$, this shows that $f$-representations directly correspond to \emph{arbitrary}
  reduced divisors in the sense of He\ss\ which are reduced along $\frakp_{n+1}$.
  
  Finally, note that if we denote by $\G[\frakp_{n+1}]$ the 1-parameter subgroup $\{ [g
  \frakp_{n+1}] \mid g \in \G \}$ of $\Pic(K)$ in case $K$ is a number field, then we can identify
  $\Pic^0(K)$ with $\Pic(K) / \G[\frakp_{n+1}]$. Hence, we can write $\Phi$ as
  \begin{align*}
    \Phi :{} & \fRep(K) \to \Pic(K) / \G[\frakp_{n+1}], \\
    & ([\frakb]_\sim, (t_1, \dots, t_n)) \mapsto \biggl[ \divisor(\frakb) + \sum_{i=1}^n t_i
    \frakp_i \biggr] + \G[\frakp_{n+1}]
  \end{align*}
  for both number fields and function fields. Thus, Theorem~\ref{infrathm-2} completely unifies the
  number field and function field scenarios.
  
  Before describing how the group operation on $\fRep(K)$ induced by the one on
  $\Pic(K)/\G[\frakp_{n+1}]$ can be computed, we want to state a result on the interrelations
  between all aforementioned groups. For that, we first make clear how the map $T \to \G^n /
  \Lambda$ is defined. Assume that $T = \Div_\infty^0(K) / (\calO_K^*/k^*)$, where $\calO_K^*/k^*$
  is embedded into $\Div_\infty^0(K)$ by forming principal divisors. Then we obtain a map $T
  \hookrightarrow \G^n / \Lambda$ by mapping the class of $\sum_{\frakp \in S} t_\frakp \frakp$ to
  $(t_{\frakp_1}, \dots, t_{\frakp_n}) + \Lambda$. This map is clearly injective. In case $K$ is a
  number field or if $\deg \frakp_{n+1} = 1$, it is surjective as well.
  
  \begin{proposition}
    \label{commutativediagramprop}
    The diagram \[ \xymatrix@R-0.3cm@C-0.2cm{ 0 \ar[r] & T \ar[r] \ar@<-2pt>@{^(->}[d] & \Pic^0(K)
    \ar[r] \ar@<-2pt>@{^(->}[d] & \Pic(\calO_K) \ar@{=}[dd] & \\ & \G^n / \Lambda
    \ar[d]_\cong^{(\Phi^\fraka)^{-1}} & \Pic(K) / \G[\frakp_{n+1}] \ar[d]^{\Phi^{-1}}_{\cong} & & \\
    0 \ar[r] & \fRep(\calO_K) \ar[r] & \fRep(K) \ar[r] & \Pic(\calO_K) \ar[r] & 0 } \] commutes. In
    case $K$ is a function field, the image of $T$ in $\G^n/\Lambda$ is the set \[ \biggl\{ (t_i)_i
    + \Lambda \in \G^n/\Lambda \,\biggm|\, \deg \frakp_{n+1} \text{ divides } \sum_{i=1}^n t_i \deg
    \frakp_i \biggr\}, \] and the image of $\Pic^0(K)$ in $\fRep(K)$ is the set \[ \biggl\{
    ([\fraka]_\sim, (t_1, \dots, t_n)) \in \fRep(K) \,\biggm|\, \deg \frakp_{n+1} \text{ divides }
    \deg \divisor(\fraka) + \sum_{i=1}^n t_i \deg \frakp_i \biggr\}. \]
  \end{proposition}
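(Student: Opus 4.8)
The plan is to make every map in the diagram explicit, then check commutativity square by square, and finally to read off the two image descriptions in the function field case from a degree bookkeeping argument. For the maps: the top row is the exact sequence from Section~\ref{fnf-intro}, where $\Pic^0(K) \to \Pic(\calO_K)$ sends $[D]$ to the class of the (fractional) ideal whose image under $\divisor$ is the part of $D$ supported off $S$; the left-hand and middle vertical maps are the injections described just before the statement (each an isomorphism when $K$ is a number field or $\deg\frakp_{n+1} = 1$) together with the isomorphisms $(\Phi^{\calO_K})^{-1}$ and $\Phi^{-1}$ from Theorems~\ref{infrathm-1} and~\ref{infrathm-2}; the bottom row is the inclusion $\fRep(\calO_K) \hookrightarrow \fRep(K)$ followed by $([\frakb]_\sim, t) \mapsto [\frakb] \in \Pic(\calO_K)$. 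I would first record that this bottom row is exact: the second map is surjective because $\Red(\fraka) \neq \emptyset$ for every $\fraka$ by Proposition~\ref{distanceInjectivityProp}, and its kernel equals $\fRep(\calO_K)$ since $[\frakb] = 0$ in $\Pic(\calO_K)$ forces $\frakb$ to be principal. Thus the bottom row is precisely the short exact sequence ``completing'' the possibly non-right-exact top row.

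I would then verify the two squares. The right-hand rectangle (to $\Pic(\calO_K)$) is essentially forced by the shape of $\Phi$: given $[D] \in \Pic^0(K)$, write $D = \divisor(\fraka) + \sum_{i=1}^n t_i \frakp_i + \ell \frakp_{n+1}$ as in the surjectivity part of the proof of Theorem~\ref{infrathm-2}; the top map then sends $[D]$ to $[\fraka]$, while $\Phi^{-1}([D])$ is of the form $([\tfrac{1}{\mu}\fraka]_\sim, \dots)$ and maps to $[\tfrac{1}{\mu}\fraka] = [\fraka]$ along the bottom row, so the two agree. The left-hand square is the heart of the matter. Take a class of $T$ represented by $\sum_{\frakp \in S} t_\frakp \frakp \in \Div_\infty^0(K)$, so $\sum_{\frakp \in S} t_\frakp \deg\frakp = 0$. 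Going down first, $(\Phi^{\calO_K})^{-1}$ applied to $(t_{\frakp_1}, \dots, t_{\frakp_n}) + \Lambda$ is, by the surjectivity part of the proof of Theorem~\ref{infrathm-1}, the $f$-representation produced by the Reduction Lemma~\ref{reduction-lemma} applied to $(\calO_K, (t_{\frakp_1}, \dots, t_{\frakp_n}))$. Going right first, $\Phi^{-1}$ of the image of this class in $\Pic(K)/\G[\frakp_{n+1}]$ is, by the surjectivity part of the proof of Theorem~\ref{infrathm-2}, computed by writing $\sum_{\frakp \in S} t_\frakp \frakp = \divisor(\calO_K) + \sum_{i=1}^n t_{\frakp_i} \frakp_i + t_{\frakp_{n+1}} \frakp_{n+1}$ (with $\divisor(\calO_K) = 0$) and applying the very same Reduction Lemma step. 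The one subtlety is that the minimum $\mu$ chosen in the Reduction Lemma is unique only up to elements of absolute value $1$ at all places in $S$; the Uniqueness Lemma~\ref{uniqueness-lemma} then guarantees that the resulting $f$-representation is well defined, so both paths land on the same element, which lies in $\fRep(\calO_K) \subseteq \fRep(K)$ because $\tfrac{1}{\mu}\calO_K$ is principal. Making this ``same Reduction-Lemma step'' matching precise is the main obstacle; the rest is unwinding the definitions of $\divisor$, $\Psi$, and the maps of Section~\ref{fnf-intro}.

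Finally, for the images in the function field case ($\G = \Z$): an element $(t_i)_i + \Lambda \in \G^n/\Lambda$ lies in the image of $T$ iff one can choose $t_{n+1} \in \Z$ with $\sum_{i=1}^n t_i \deg\frakp_i + t_{n+1} \deg\frakp_{n+1} = 0$, i.e.\ iff $\deg\frakp_{n+1}$ divides $\sum_{i=1}^n t_i \deg\frakp_i$; I would check that this is independent of the chosen representative modulo $\Lambda$ using that for a unit $\varepsilon$ one has $\sum_{i=1}^n (-\nu_{\frakp_i}(\varepsilon)) \deg\frakp_i = \nu_{\frakp_{n+1}}(\varepsilon) \deg\frakp_{n+1}$, since $(\varepsilon)$ is supported on $S$ and of degree zero. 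Likewise $([\frakb]_\sim, (t_i)_i) \in \fRep(K)$ lies in the image of $\Pic^0(K)$ iff its image under $\Phi$ is represented by a degree-zero divisor modulo $\G[\frakp_{n+1}]$, i.e.\ iff there is $\ell \in \Z$ with $\deg\divisor(\frakb) + \sum_{i=1}^n t_i \deg\frakp_i + \ell \deg\frakp_{n+1} = 0$, i.e.\ iff $\deg\frakp_{n+1}$ divides $\deg\divisor(\frakb) + \sum_{i=1}^n t_i \deg\frakp_i$; well-definedness on $\sim$-classes holds because replacing $\frakb$ by $h\frakb$ with $\abs{h}_\frakp = 1$ on all $\frakp \in S$ changes $\divisor(\frakb)$ by the degree-zero divisor $(h)$. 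As a consistency check, the second image statement also follows from the first via the already-established commuting square and the isomorphisms $\Phi^{-1}$ and $(\Phi^{\calO_K})^{-1}$.
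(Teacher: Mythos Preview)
Your proposal is correct and follows essentially the same route as the paper: the right square is checked by tracking the ideal class through $\Phi^{-1}$ via the Reduction Lemma, the left square by observing that both vertical compositions reduce to applying Lemma~\ref{reduction-lemma} to $(\calO_K,(t_{\frakp_1},\dots,t_{\frakp_n}))$, and the image descriptions come from the degree-zero condition. You are in fact slightly more thorough than the paper, which does not explicitly verify exactness of the bottom row or the well-definedness of the divisibility conditions modulo $\Lambda$ and $\sim$.
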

  
  Proposition~\ref{commutativediagramprop} shows in particular that the group
  operation~$\oplus_{\calO_K}$ on $\fRep(\calO_K)$ defined in the last section is identical to the
  group operation~$+$ obtained from the group operation on $\Pic(K)/\G[\frakp_{n+1}]$ restricted to
  the subset $\fRep(\calO_K)$. Hence, we are able to relate two group operations which were defined
  quite differently: $\oplus_{\calO_K}$ is defined by pulling back the addition from $\G^n /
  \Lambda$, and $+$ is defined by pulling back the addition from $\Pic(K)/\G[\frakp_{n+1}]$.
  
  \begin{proof}[Proof of Proposition~\ref{commutativediagramprop}.]
    We first show that the left square commutes. For that, we compare the maps $T \to \G^n/\Lambda
    \to \fRep(\calO_K) \to \fRep(K) \to \Pic(K)/\G[\frakp_{n+1}]$ with $T \to \Pic^0(K) \to
    \Pic(K)/\G[\frakp_{n+1}]$. Let the class of $D = \sum_{i=1}^{n+1} t_i \frakp_i$ be an element of
    $T$. Then it is mapped to $(t_1, \dots, t_n) + \Lambda$ in $\G^n/\Lambda$ and to an
    $f$-representation $A = ([\frac{1}{\mu} \calO_K]_\sim, (t'_1, \dots, t'_n)) \in \fRep(\calO_K)$
    such that
    \begin{equation}\tag{$\ast$}
      \Phi^{\calO_K}(A) = \Psi(\mu) + (t'_1, \dots, t'_n) + \Lambda = (t_1, \dots, t_n) + \Lambda.
    \end{equation}
    This in turn is mapped to the class of $\divisor(\frac{1}{\mu} \calO_K) + \sum_{i=1}^n t'_i
    \frakp_i$ in $\Pic(K)/\G[\frakp_{n+1}]$. Hence, we evaluated the class of $D$ along the first
    composition of maps.
    
    Now $D$ is rationally equivalent to $\sum_{i=1}^{n+1} t_i \frakp_i + (\mu)$. The finite part of
    this divisor is $\divisor(\frac{1}{\mu} \calO_K)$. The valuation of this divisor at $\frakp_i$
    is $t_i + \nu_{\frakp_i}(\mu)$ for $1 \le i \le n$, and $(t_i + \nu_{\frakp_i}(\mu))_i + \Lambda
    = (t'_i)_i + \Lambda$ by ($\ast$). But this means that $[D] = [D - \mu] = [\divisor(\fraka) +
    \sum_{i=1}^n t'_i \frakp_i + \ell \frakp_{n+1}]$ in $\Pic^0(K)$ for suitable $\ell \in \G$,
    whence the first square commutes.
    
    To see that the second square commutes, note that if $[D] \in \Pic^0(K)$ with $D =
    \divisor(\fraka) + \sum_{i=1}^{n+1} t_i \frakp_i$, then $[D]$ maps to the ideal class of
    $\fraka$ in $\Pic(\calO_K)$. Now the $f$-representation representing $[D] + \G[\frakp_{n+1}]$
    can be found by reducing $(\fraka, (t_1, \dots, t_n))$, yielding the ideal part $[\frac{1}{\mu}
    \fraka]_\sim$ for some $\mu \in \calE(\fraka)$. But the resulting $f$-representation is mapped
    to the ideal class of $\frac{1}{\mu} \fraka$ in $\Pic(\calO_K)$, which is the same as the ideal
    class of $\fraka$. Therefore, the second square also commutes.
    
    Finally, in case $K$ is a function field, the equalities for the images of $T$ in $\G^n /
    \Lambda$ and $\Pic^0(K)$ in $\fRep(K)$ follow from the fact that divisors representing elements
    of $T$ and $\Pic^0(K)$ must have degree~zero.
  \end{proof}
  
  It turns out that the group operations in $\Pic^0(K)$, respectively, $\Pic(K) / \ggen{
  [\frakp_{n+1}] }$, can be described in a nice way using $f$-representations. This directly
  generalizes the arithmetic in $(\fRep(\calO_K), \oplus_{\calO_K})$ as described in
  Proposition~\ref{fRepArithmeticOplus}. Note that this is \emph{not} related to the arithmetic in
  $(\fRep(\fraka), \oplus_\fraka)$ for $\fraka \neq \calO_K$.
  
  The following theorem describes how the group operations on $\fRep(K)$ can be effectively
  computed.
  
  \begin{theorem}[Infrastructure, Part~III: Computing the group operation]
    \label{infrathm-3}
    Let $A = ([\frakb]_\sim, (t_1, \dots, t_n)), A' = ([\frakb']_\sim,$ $(t_1', \dots, t_n')) \in
    \fRep(K)$.
    \begin{enuma}
      \item There exists a minimal $\ell \in \G$ such that $B_\ell := B(\frakb \frakb', (t_1 + t_1',
      \dots, t_n + t_n', \ell)) \setminus \{ 0 \}$ is non-empty; if $\mu$ is a smallest element with
      respect to $\le$ in $B_\ell$, we get $B := ([\frac{1}{\mu} \frakb \frakb']_\sim, (t_1 + t_1' +
      \nu_{\frakp_1}(\mu), \dots, t_n + t_n' + \nu_{\frakp_n}(\mu))) \in \fRep(K)$ and $\Phi(A) +
      \Phi(A') = \Phi(B)$.
      \item There exists a minimal $\ell \in \G$ such that $B_\ell := B(\frakb^{-1}, (-t_1, \dots,
      -t_n, \ell)) \setminus \{ 0 \}$ is non-empty; if $\mu$ is a smallest element with respect to
      $\le$ in $B_\ell$, we get $C := ([\frac{1}{\mu} \frakb^{-1}]_\sim, (-t_1 +
      \nu_{\frakp_1}(\mu), \dots, -t_n + \nu_{\frakp_n}(\mu))) \in \fRep(K)$ and $-\Phi(A) =
      \Phi(C)$.
    \end{enuma}
  \end{theorem}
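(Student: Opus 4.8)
The plan is to deduce both parts directly from the Reduction Lemma~\ref{reduction-lemma} plus a bookkeeping computation of divisor classes via the bijection $\Phi$ of Theorem~\ref{infrathm-2}. Throughout I would use the uniform formula for $\Phi$ recorded at the end of this section, $\Phi([\frakb]_\sim,(s_1,\dots,s_n)) = [\divisor(\frakb) + \sum_{i=1}^n s_i\frakp_i] + \G[\frakp_{n+1}]$, which is valid for both number fields and function fields, and the fact (immediate from its definition) that $\divisor\colon\Id(\calO_K)\to\Div(K)$ is a group homomorphism. The one identity that drives everything is
\[
\divisor\bigl(\tfrac{1}{\mu}\frakb\bigr) \;=\; \divisor(\frakb) + \sum_{\frakp\notin S}\nu_\frakp(\mu)\frakp \;=\; \divisor(\frakb) + (\mu) - \sum_{i=1}^{n+1}\nu_{\frakp_i}(\mu)\frakp_i ,
\]
valid for any fractional ideal $\frakb$ and any $\mu\in K^*$ (using $S=\{\frakp_1,\dots,\frakp_{n+1}\}$ and that $(\mu)$ is a principal divisor).

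For part~(a) I would first observe that $\frakb\frakb'$ is again a fractional ideal, so applying the Reduction Lemma~\ref{reduction-lemma} to $\frakb\frakb'$ (viewed as an ideal equivalent to itself) and the tuple $(t_1+t_1',\dots,t_n+t_n')$ produces the minimal $\ell$, a smallest element $\mu\in B_\ell$, and the membership $B\in\fRep(\frakb\frakb')\subseteq\fRep(K)$. It then remains to verify $\Phi(A)+\Phi(A')=\Phi(B)$. Using additivity of $\divisor$ one gets $\Phi(A)+\Phi(A') = [\divisor(\frakb\frakb') + \sum_{i=1}^n(t_i+t_i')\frakp_i] + \G[\frakp_{n+1}]$; substituting the displayed identity into $\Phi(B) = [\divisor(\tfrac{1}{\mu}\frakb\frakb') + \sum_{i=1}^n(t_i+t_i'+\nu_{\frakp_i}(\mu))\frakp_i] + \G[\frakp_{n+1}]$ cancels the valuations of $\mu$ at $\frakp_1,\dots,\frakp_n$ and leaves an extra summand $(\mu) - \nu_{\frakp_{n+1}}(\mu)\frakp_{n+1}$, which vanishes in $\Pic(K)/\G[\frakp_{n+1}]$ since $(\mu)$ is principal and $\nu_{\frakp_{n+1}}(\mu)\frakp_{n+1}\in\G[\frakp_{n+1}]$. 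Hence $\Phi(B)=\Phi(A)+\Phi(A')$.

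Part~(b) is the same argument with $\frakb^{-1}$ in place of $\frakb\frakb'$: applying the Reduction Lemma~\ref{reduction-lemma} to the fractional ideal $\frakb^{-1}$ and the tuple $(-t_1,\dots,-t_n)$ gives $\ell$, $\mu$ and $C\in\fRep(\frakb^{-1})\subseteq\fRep(K)$, and then $\Phi(C)$ is computed from the displayed identity together with $\divisor(\frakb^{-1})=-\divisor(\frakb)$; after the identical cancellation one is left with $-[\divisor(\frakb)+\sum_{i=1}^n t_i\frakp_i]+\G[\frakp_{n+1}] = -\Phi(A)$.

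I do not expect a genuine obstacle here: the theorem merely says that the group law of $\Pic(K)/\G[\frakp_{n+1}]$ transports, along $\Phi$, to ``multiply (resp.\ invert) the ideal parts, add (resp.\ negate) the real parts, then reduce'', and the existence of the minimal $\ell$ and of a smallest $\mu\in B_\ell$ is already part of the Reduction Lemma. The only points that demand a little care are keeping the signs consistent in the displayed identity relating $\divisor(\tfrac{1}{\mu}\frakb)$, $\divisor(\frakb)$ and $(\mu)$, and noticing that the leftover $\frakp_{n+1}$-term always falls in the subgroup $\G[\frakp_{n+1}]$ that one quotients by --- something that works uniformly for number fields and function fields precisely because $\Phi$ is phrased as a map into the $\G[\frakp_{n+1}]$-quotient.
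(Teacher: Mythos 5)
Your proposal is correct and follows the same strategy the paper sketches: invoke the Reduction Lemma~\ref{reduction-lemma} to obtain the $f$-representation $B$ (resp.\ $C$), then verify the identity in $\Pic(K)/\G[\frakp_{n+1}]$ using the homomorphism property of $\divisor$ and the uniform formula for $\Phi$. The paper's own proof is only a remark to that effect; you have merely spelled out the cancellation $\divisor(\tfrac{1}{\mu}\frakb) = \divisor(\frakb) + (\mu) - \sum_{i=1}^{n+1}\nu_{\frakp_i}(\mu)\frakp_i$ and the observation that $(\mu)$ and $\nu_{\frakp_{n+1}}(\mu)\frakp_{n+1}$ vanish modulo $\Princ(K)$ and $\G[\frakp_{n+1}]$ respectively, which is exactly the bookkeeping the paper leaves implicit.
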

  
  The main parts of this lemma were already shown in Lemma~\ref{reduction-lemma}, namely that $B$
  and $C$ are indeed $f$-representations. The claims $\Phi(A) + \Phi(A') = \Phi(B)$ and $-\Phi(A) =
  \Phi(C)$ follows from the fact that $\divisor : \Id(\calO_K) \to \Div(K)$ is a group homomorphism
  as well as from the definitions of $\Phi$ and of the group operation on $\Pic^0(K)$, respectively,
  $\Pic(K) / \ggen{ [\frakp_{n+1}] }$.
  
  Note that this ``reduction'' step, namely minimizing $\ell$, and then minimizing $\mu$ with
  respect to $\le$ if necessary, is essentially the same what is used for arithmetic on
  hyperelliptic and superelliptic curves \cite{galbraith-paulus-smart}, and for He\ss' arithmetic in
  function fields with $\deg \frakp_{n+1} = 1$ \cite{hessRR}; compare the discussion following the
  Reduction Lemma~\ref{reduction-lemma}. This is not very surprising, since as we already mentioned,
  $f$-representations are another representation of divisors reduced along $\frakp_{n+1}$.
  
  We have seen that all infrastructures $(\Red(\fraka)/_\sim, d^\fraka)$ in $K$, and their
  corresponding $f$-representations $\fRep(\fraka)$, can be combined to the set of \emph{all}
  $f$-representations $\fRep(K)$, which parameterizes the (Arakelov) divisor class group $\Pic(K) /
  \G[\frakp_{n+1}] \supseteq \Pic^0(K)$ using the bijection~$\Phi$. Moreover, we have seen how the
  group structure on $\fRep(K)$ induced by the one on the (Arakelov) divisor class group can be
  computed in terms of $f$-representations only; this is essentially ideal multiplication followed
  by reduction, hence generalizing Shanks' giant steps. Using Corollary~\ref{idealcomparecorr} and
  Proposition~\ref{idealcompareprop} we are able to compare $f$-representations. Therefore, we can
  represent the (Arakelov) divisor class group using $f$-representations and use them to perform
  effective arithmetic.
  
  \section{Computations Using $f$-Representations}
  \label{computationsfprep}
  
  Infrastructures not only represent an interesting algebraic concept, but
  $f$-re\-pre\-sen\-ta\-tions lend themselves very well to computation and lead to efficient
  algorithms for computing fundamental units in global function fields. They require only limited
  storage and allow for efficient giant step computation, as documented in this section. Further
  evidence supporting the suitability of $f$-representations for computation is provided with three
  non-trivial numerical examples. Proofs of these results and a more detailed discussion of
  implementation go beyond the scope of this work and are the subject of a forth-coming paper.
  
  We begin with a result on the size of $f$-representations, which in the function field case is
  identical to a result by He\ss\ in \cite[Section~8]{hessRR}. In the number field case, it
  generalizes a result by Schoof \cite[Proposition~7.2~(i)]{schoofArakelov} to $f$-representations;
  his result is slightly stronger than the well-known inequality $1 \le \Norm_{K/\Q}(\fraka^{-1})
  \le \sqrt{\abs{\Delta}}$ for $\fraka \in \Red(K)$, where $\Delta$ is the discriminant of $K$.
  
  Remember that $\deg \divisor(\fraka) = -\log \Norm_{K/\Q}(\fraka)$ if $K$ is a number field, and
  $\deg \divisor(\fraka) = -\deg \Norm_{K/k(x)}(\fraka)$ if $K$ is a function field.
  
  \begin{proposition}
    \label{frepdegreeinbothcases:body}
    Let $([\fraka]_\sim, (t_i)_i) \in \fRep(K)$. Then $\divisor(\fraka) \ge 0$ and $t_i \ge 0$ for
    $1 \le i \le n$. If $K$ is a function field, let $g$ be its genus, and if $K$ is a number field,
    let $\Delta$ be its discriminant and $2 s$ its number of complex embeddings. Then \[ 0 \le \deg
    \divisor(\fraka) + \sum_{i=1}^n t_i \deg \frakp_i \le \begin{cases} g + (\deg \frakp_{n+1} - 1)
      & \text{if } K \text{ is a function field,} \\ \tfrac{1}{2} \log \abs{\Delta} - s \log
      \tfrac{\pi}{2} & \text{if } K \text{ is a number field.} \end{cases} \] 
  \end{proposition}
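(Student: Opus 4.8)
The plan is to extract the positivity assertions directly from the definition of an $f$-representation, and then to bound $\deg\divisor(\fraka)+\sum_{i=1}^n t_i\deg\frakp_i$ from the fact that an $f$-representation forces the relevant box to be ``empty just below $\frakp_{n+1}$''. First the easy parts: since $([\fraka]_\sim,(t_1,\dots,t_n))\in\fRep(K)$, Definition~\ref{fRepDefForGlobalField} gives $1\in B(\fraka,(t_1,\dots,t_n,0))$, so in particular $1\in\fraka$, hence $\calO_K\subseteq\fraka$ and $\divisor(\fraka)\ge 0$; the same containment yields $1=\abs{1}_{\frakp_i}\le q^{t_i\deg\frakp_i}$, which together with $q>1$ and $\deg\frakp_i>0$ forces $t_i\ge 0$ for $1\le i\le n$, and then $0\le\deg\divisor(\fraka)+\sum_i t_i\deg\frakp_i$ is immediate. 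For the key observation, since $1$ is a $\le$-smallest element of $B(\fraka,(t_1,\dots,t_n,0))\setminus\{0\}$ and the first coordinate of the lexicographic order on $K^*$ is $\abs{\placeholder}_{\frakp_{n+1}}$, while every member $h$ of that box satisfies $\abs{h}_{\frakp_{n+1}}\le 1=\abs{1}_{\frakp_{n+1}}$, we must have $\abs{h}_{\frakp_{n+1}}=1$ for every nonzero $h$ in the box. Consequently $B(\fraka,(t_1,\dots,t_n,\ell))=\{0\}$ for every $\ell\in\G$ with $\ell<0$, since such a box is contained in $B(\fraka,(t_1,\dots,t_n,0))$ but its nonzero members would satisfy $\abs{h}_{\frakp_{n+1}}\le q^{\ell\deg\frakp_{n+1}}<1$.

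Now split into cases. If $K$ is a function field, put $D:=\divisor(\fraka)+\sum_{i=1}^n t_i\frakp_i\in\Div(K)$; by the identification of boxes with Riemann--Roch spaces recalled before the proof of Proposition~\ref{reducedidealdeg1prop}, the vanishing $B(\fraka,(t_1,\dots,t_n,-1))=\{0\}$ reads $L(D-\frakp_{n+1})=\{0\}$, so $\dim_k L(D-\frakp_{n+1})=0$, and Riemann's inequality $\dim_k L(A)\ge\deg A+1-g$ forces $\deg D\le g-1+\deg\frakp_{n+1}=g+(\deg\frakp_{n+1}-1)$; since $\deg D=\deg\divisor(\fraka)+\sum_i t_i\deg\frakp_i$, this is exactly the claimed bound. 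If $K$ is a number field (so $q=e$), use the Minkowski embedding $\Phi:K\hookrightarrow K\otimes_\Q\R\cong\R^d$, $d=[K:\Q]$, under which $\fraka$ is a full lattice of covolume $2^{-s}\Norm_{K/\Q}(\fraka)\sqrt{\abs{\Delta}}$ and $B(\fraka,(t_1,\dots,t_n,\ell))$ corresponds to the points of $\Phi(\fraka)$ lying in the compact symmetric convex body $C_\ell$ which is the product, over the places $\frakp_1,\dots,\frakp_{n+1}$ of $S$ with $t_{n+1}:=\ell$, of intervals of length $2e^{t_i}$ at the real places and discs of radius $e^{t_i}$ at the complex places (the condition $\abs{h}_{\frakp_i}\le e^{t_i\deg\frakp_i}$ translating to $\abs{\sigma_i(h)}\le e^{t_i}$). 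For $\ell<0$ the body $C_\ell$ contains no nonzero lattice point, so Minkowski's convex body theorem gives $\operatorname{vol}(C_\ell)\le 2^d\cdot 2^{-s}\Norm_{K/\Q}(\fraka)\sqrt{\abs{\Delta}}$; letting $\ell\uparrow 0$ and using continuity of $\ell\mapsto\operatorname{vol}(C_\ell)$ the same holds for $C_0$. Computing $\operatorname{vol}(C_0)=2^{r_1}\pi^{s}\exp\bigl(\sum_{i=1}^n t_i\deg\frakp_i\bigr)$, where $r_1$ is the number of real places so that $r_1+2s=d$, simplifying by $2^d\cdot 2^{-s}=2^{r_1+s}$, taking logarithms, and using $\log\Norm_{K/\Q}(\fraka)=-\deg\divisor(\fraka)$ turns the inequality into $\deg\divisor(\fraka)+\sum_i t_i\deg\frakp_i\le\tfrac12\log\abs{\Delta}-s\log\tfrac{\pi}{2}$.

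The positivity statements and the function field half of the bound are routine once the box-vanishing observation is in place. The step requiring the most care is the number field estimate: keeping the powers of $2$ and $\pi$ exactly straight across the real and complex places, and justifying the passage $\ell\uparrow 0$ (equivalently, replacing $C_0$ by the increasing union of the $C_\ell$ with $\ell<0$, whose volumes converge to $\operatorname{vol}(C_0)$ since the two sets differ only by a boundary of measure zero).
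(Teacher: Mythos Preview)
Your proof is correct and follows essentially the same approach as the paper: both extract $D\ge 0$ from $1\in L(D)$, observe that the minimality of $1$ with respect to $\le$ forces $B(\fraka,(t_1,\dots,t_n,\ell))=\{0\}$ for $\ell<0$, then apply Riemann's inequality in the function field case and Minkowski's lattice point theorem (with a limiting argument $\ell\uparrow 0$, equivalently $\varepsilon\downarrow 0$) in the number field case. The only cosmetic difference is that you spell out the covolume of $\Phi(\fraka)$ and the volume of $C_\ell$ explicitly, whereas the paper quotes the packaged inequality from Neukirch directly; the arithmetic is the same.
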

  
  This shows that not only the norm of the integral ideal~$\fraka^{-1}$ as well as the positive
  integers $t_i$ are bounded, but a linear combination of these values with positive coefficients is
  bounded. As shown by Paulus and R\"uck \cite{paulus-rueck}, this bound is sharp in the case of
  real quadratic function fields.
  
  \begin{proof}[Proof of Proposition~\ref{frepdegreeinbothcases:body}.]
    Let $D = \divisor(\fraka) + \sum_{i=1}^n t_i \frakp_i$. Then $B(\fraka, (t_1, \dots, t_n, 0)) =
    L(D)$ contains $k$ and $L(D - \frakp_{n+1}) = B(\fraka, (t_1, \dots, t_n, -\varepsilon)) = 0$
    for every $\varepsilon > 0$, $\varepsilon \in \G$. The inclusion shows $D \ge 0$ as $1 \in k$,
    whence $\divisor(\fraka) \ge 0$ and $t_i \ge 0$, $1 \le i \le n$.
    
    If $K$ is a function field of genus~$g$, by Riemann's Inequality \[ 0 = \dim_k L(D -
    \frakp_{n+1}) \ge 1 - g + \deg \divisor(\fraka) + \sum_{i=1}^n t_i \deg \frakp_i - \deg
    \frakp_{n+1}; \] therefore $\deg \divisor(\fraka) + \sum_{i=1}^n t_i \deg \frakp_i \le g - 1 +
    \deg \frakp_{n+1}$.
    
    If $K$ is a number field with $2 s$~complex embeddings and discriminant $\Delta$, we have
    $B(\fraka, (t_1, \dots, t_n, -\varepsilon)) \neq \{ 0 \}$ for $\varepsilon > 0$ if
    \[ e^{-\varepsilon \deg \frakp_{n+1}} \prod_{i=1}^n e^{t_i \deg \frakp_i} > \bigl(
    \tfrac{2}{\pi} \bigr)^s \sqrt{\abs{\Delta}} \Norm_{K/\Q}(\fraka) \] by Minkowski's Lattice Point
    Theorem \cite[Theorem~5.3]{neukirch}. Hence, we must have \[ \exp\biggl( \sum_{i=1}^n t_i \deg
    \frakp_i - \varepsilon \deg \frakp_{n+1} \biggr) \le \bigl( \tfrac{2}{\pi} \bigr)^s
    \sqrt{\abs{\Delta}} e^{-\deg \divisor(\fraka)}. \] Solving for $\deg \divisor(\fraka) +
    \sum_{i=1}^n t_i \deg \frakp_i$ and considering that this is true for all $\varepsilon > 0$
    yields the claim.
  \end{proof}
  
  To represent a reduced ideal, one can use a Hermite normal form representation with respect to a
  fixed integral basis as described in \cite{cohen}. This allows to represent a fractional ideal
  with a unique binary representation. In the number field case, C.~Thiel showed in
  \cite[Corollary~3.7]{thiel-comprep} that one can represent a reduced ideal in a number field of
  degree~$d$ and discriminant~$\Delta$ with at most $(d^2 + 1) \log_2 \sqrt{\abs{\Delta}}$~bits. For
  function fields, we obtain:
  
  \begin{proposition}
    \label{reducedidealsize:body}
    Let $K$ be a function field. Assume that elements of $k$ can be represented by $\calO(\log
    q)$~bits. Then $f$-representations can be represented by $\calO\bigl(d^2 (g + \deg \frakp_{n+1}
    - 1) \log q \bigr)$~bits. \qed
  \end{proposition}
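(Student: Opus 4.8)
The plan is to split an $f$-representation $([\fraka]_\sim,(t_1,\dots,t_n))\in\fRep(K)$ into its ideal part $[\fraka]_\sim$ and its tuple part $(t_1,\dots,t_n)$, to bound the storage of each separately, and to observe that the ideal part dominates. Write $d:=[K:k(x)]$ and $M:=g+\deg\frakp_{n+1}-1$. By Proposition~\ref{frepdegreeinbothcases:body} we have $\divisor(\fraka)\ge 0$, $t_i\ge 0$ for all $i$, and $0\le\deg\divisor(\fraka)+\sum_{i=1}^n t_i\deg\frakp_i\le M$. If $M=0$, that is, if $g=0$ and $\deg\frakp_{n+1}=1$, then $\deg\divisor(\fraka)=0$ together with $\divisor(\fraka)\ge 0$ forces $\fraka=\calO_K$ and all $t_i=0$, so the only $f$-representation is trivial; hence I may assume $M\ge 1$ from now on.

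For the tuple part, each $\deg\frakp_i\ge 1$ and all the summands above are non-negative, so $0\le t_i\le M$ for every $i$; thus each $t_i$ is storable in $\calO(\log(M+1))$ bits, and since $n=\abs{S}-1<d$ (the places in $S$ all lie over the infinite place of $k(x)$), the whole tuple takes $\calO(d\log(M+1))$ bits, which is negligible compared with the bound claimed for the ideal part.

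For the ideal part, recall that $\calO_K$ is a free $k[x]$-module of rank $d$ with a fixed integral basis $\omega_1,\dots,\omega_d$. Since $\fraka$ is reduced we have $1\in\fraka$, hence $\calO_K\subseteq\fraka$ and the inverse $\fraka^{-1}$ is an \emph{integral} ideal of $\calO_K$; as $\fraka^{-1}$ determines $\fraka$, and hence $[\fraka]_\sim$, it suffices to bound the storage of $\fraka^{-1}$. The key numerical input is the standard identity $\dim_k(\calO_K/\fraka^{-1})=\deg\Norm_{K/k(x)}(\fraka^{-1})=-\deg\divisor(\fraka^{-1})=\deg\divisor(\fraka)$, which together with Proposition~\ref{frepdegreeinbothcases:body} gives $\dim_k(\calO_K/\fraka^{-1})\le M$. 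I would then represent $\fraka^{-1}$ by its Hermite normal form with respect to $\omega_1,\dots,\omega_d$, as in \cite{cohen}: a triangular matrix $(a_{ij})\in k[x]^{d\times d}$ with monic diagonal and with each off-diagonal entry reduced modulo its pivot. Since $\sum_j\deg a_{jj}=\dim_k(\calO_K/\fraka^{-1})\le M$, every diagonal entry has degree $\le M$, and by the reduction step every off-diagonal entry has degree $<M$; thus all $d^2$ entries are polynomials of degree $\le M$, i.e.\ at most $d^2(M+1)$ elements of $k$. Under the hypothesis that one element of $k$ occupies $\calO(\log q)$ bits, storing $\fraka^{-1}$ costs $\calO(d^2(M+1)\log q)=\calO(d^2 M\log q)$ bits, using $M\ge 1$. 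Adding the tuple estimate yields the proposition.

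The main obstacle, and essentially the only point that is not routine bookkeeping, is the control of the Hermite normal form entries. This rests on two facts: that a full-rank submodule of a free module over the principal ideal domain $k[x]$ admits a Hermite normal form whose off-diagonal entries are reduced below the degree of the corresponding diagonal pivot, and that the sum of the pivot degrees equals $\dim_k(\calO_K/\fraka^{-1})$, which in turn equals $\deg\divisor(\fraka)$ and is bounded by Proposition~\ref{frepdegreeinbothcases:body}. Granting these, the per-entry coefficient count and the tuple estimate are both immediate.
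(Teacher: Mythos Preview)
The paper does not actually prove this proposition: the statement carries a bare \qed, and immediately afterwards the author writes that a more precise statement and a proof will appear in a subsequent paper. The only hint given is the preceding sentence, which says that reduced ideals are to be represented by a Hermite normal form with respect to a fixed integral basis (citing Cohen), in analogy with Thiel's bound for number fields.

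Your argument is correct and is precisely the approach the paper signals. The two essential ingredients---that the integral ideal $\fraka^{-1}$ has a $k[x]$-HNF whose pivot degrees sum to $\dim_k(\calO_K/\fraka^{-1})=\deg\divisor(\fraka)$, and that this quantity plus $\sum_i t_i\deg\frakp_i$ is bounded by $g+\deg\frakp_{n+1}-1$ via Proposition~\ref{frepdegreeinbothcases:body}---are exactly the ones one expects, and your bookkeeping (the $M=0$ edge case, the bound $n<d$ from $\sum_{\frakp\in S} e_\frakp\deg\frakp=d$, the reduction of off-diagonal entries below their pivots, and the absorption of the tuple cost) is sound. One small remark: when $\deg\frakp_{n+1}>1$, storing a particular $\fraka$ gives a representation of $[\fraka]_\sim$ but not a \emph{canonical} one; the paper flags this non-uniqueness as an open question in Section~\ref{conclusion}, so it is not an issue for the proposition as stated.
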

  
  We will provide a more precise statement as well as a proof in a subsequent paper.
  
  Using a technique similar to He\ss' algorithm for computing Riemann-Roch spaces \cite{hessRR}, we
  implemented $f$-representations for function fields. We made the assumption that $\deg
  \frakp_{n+1} = 1$ to ensure that we can quickly compare $f$-representations by their binary
  representation. We added to our implementation an algorithm by Buchmann and A.~Schmidt
  \cite{buchmann-schmidt} to compute the relation lattice~$\Lambda$ of the elements $(g_1, \dots,
  g_n)$ in $\fRep(\calO_K)$, where $g_i = (\Phi^{\calO_K})^{-1}(e_i)$ if $e_i \in \Z^n$ is the
  $i$-th standard unit vector; note that this is a system of generators of $\fRep(\calO_K)$. This
  lattice equals the unit lattice as defined in Section~\ref{reducedideals}. Since the
  Buchmann-Schmidt algorithm is of baby-step giant-step type and requires $\calO(n
  \sqrt{\abs{\fRep(\calO_K)}})$~group operations and $\calO(\sqrt{\abs{\fRep(\calO_K)}})$~storage of
  group elements, we therefore implemented an algorithm which computes the unit lattice of a global
  function field with at least one infinite place of degree~one in $\calO(\sqrt{R})$~infrastructure
  operations using~$\calO(\sqrt{R})$ storage (assuming $[K : k(x)] = \calO(1)$). This can be seen as
  a generalization of Shanks' baby-step giant-step algorithm for computing the unit lattice for a
  real quadratic number field \cite{shanks-infra}, or of Buchmann's baby-step giant step algorithm
  for computing the unit lattice of an arbitrary number field \cite{buchmann-habil}.
  
  Our algorithm was implemented in C++ using NTL. It currently relies on MAGMA for computation of
  integral bases and information on the infinite places. We present three numerical examples that
  were obtained using our algorithm. We compared the output of our program with MAGMA's built-in
  function \verb"Regulator()"; this function apparently uses He\ss' subexponential algorithm for
  computation of the divisor class group \cite{hessDiss}. We applied both our algorithm and MAGMA to
  the function fields of many curves. As an example, we want to present three curves:
  \begin{enum1}
%
%
    \item $y^3 = (x + 1) y^2 - (123 x^3 - 423 x^2 + 948 x - 1) y + (13 x^2 + 3123 x + 11) x^2$ over
    $\F_{1009}$; the function field has genus~$3$, two infinite places of degree~$1$ (so unit
    rank~$1$) and regulator~$496\,804\,315$;
    
%
%
%

    \item $y^8 = 81 (x+2)^2 (x - 3)^3 (x + 1)^3$ over $\F_{1009}$; the function field has genus~$3$,
    eight infinite places of degree~$1$ (so unit rank~$7$) and regulator~$62\,322\,365$;

%
%
%
    \item $(2+\alpha) (y^4 - y^2) + \frac{1 - \alpha}{x} (y^3 + y^2) + \frac{1}{1 - (1 + \alpha^2)
    x} y = 12 \frac{\alpha - x}{x}$ over $\F_{31^2} = \F_{31}[\alpha]$ with $\alpha^2 + 29 \alpha +
    3 = 0$; the function field has genus~$3$, two infinite places of degree~$1$, one infinite place
    of degree~$2$ (so unit rank~$2$), and regulator~$896\,118\,755$.

%
%
  \end{enum1}
  These fields show that our implementation is not restricted to curves of special form, small unit
  rank, or prime fields. We also do not require all infinite places to have degree~one.
  
  For the first field, MAGMA ran 1.4 to 2.0~hours (in ten different runs) and required between 99~MB
  and 104~MB of memory to compute the regulator. For the second field, MAGMA's running time varied
  dramatically between 3.4~hours and 8.9~days in seven runs, with an average of 4.8~days; the memory
  consumption ranged between 119~MB and 127~MB, where usually the memory usage was around 120~MB, and only
  spiked up to 127~MB for the two runs which needed only a few hours. For the last field, MAGMA worked 2.4~minutes and
  required 110~MB of memory (with minimal variations in twelve runs). On the same machine, our
  implementation was able to compute the regulator in 13.6~minutes for the first field using 46~MB
  of memory, in 9.2~hours for the second field using 97~MB of memory, and in 11.6~hours for the last
  field using 313~MB of memory.
  
  Note that our implementation is not very optimized and in a very general form. Nonetheless, this
  demonstrates that the techniques developed in this paper can be used for computation, and even
  outperform the built-in functions of MAGMA in certain cases. The latter is not surprising, since
  the algorithm MAGMA apparently uses is designed for small constant fields and characteristics, and
  for such function fields is in general much faster than our implementation.
  
  \section{Conclusion}
  \label{conclusion}
  
  We presented a concise interpretation of the infrastructure in a global field, by considering a
  finite set $X^\fraka$, consisting of equivalence classes of reduced ideals in the ideal class of
  $\fraka$, and a distance map $d^\fraka : X^\fraka \to \G^n / \Lambda$, where $\Lambda$ is
  essentially $\calO_K^*/k^*$. We have shown how one can find a reduction map $\reduce^\fraka : \G^n
  / \Lambda \to X^\fraka$ by providing a set of $f$-representations. This generalizes
  one-dimensional infrastructures, and in particular Shanks' original approach and its
  interpretation by Lenstra \cite{lenstra-infrastructure}. 
  
  Considering all infrastructures $(X^\fraka, d^\fraka, \reduce^\fraka)$, $\fraka \in \Id(\calO_K)$
  in $K$ at the same time, we saw that the set of all $f$-representations, $\fRep(K)$, can be
  identified with the (Arakelov) divisor class group $\Pic(K)/\G[\frakp_{n+1}]$ of $K$. This
  generalizes the result by Paulus and R\"uck \cite{paulus-rueck} for hyperelliptic function fields,
  and is compatible with the arithmetic in $\Pic^0(K)$ described by He\ss\ \cite{hessRR}. Moreover,
  our embedding of $\fRep(K)$ into $\Pic^0(K)$ in the number field case is similar to Schoof's
  embedding of $\Red(K)$ into the Arakelov divisor class group $\Pic^0(K)$.
  
  An important open question is how baby steps can be interpreted in our approach. One can interpret
  them as Buchmann in \cite{buchmann-habil} as a tool which computes all reduced elements whose
  distance lies in a given parallelepiped, as this allows baby-step giant-step algorithms for
  arbitrary infrastructures. Unfortunately, no efficient method for computing these ``baby steps''
  is known for number fields. Another open question is whether one can find an efficient unique
  representation of elements in $\Pic^0(K)$ in case no infinite place of degree~one is
  available. Having a unique representation of an element in $\Red(K)/_\sim$ is required to do fast
  look-ups as in algorithms of baby-step giant-step type.
  
  \section*{Acknowledgments}
  I would like to thank Michael J.~Jacobson and Hugh C.~Williams for introducing me into the subject
  of infrastructures, Andreas Stein for several discussions which gave me ideas and Renate Scheidler
  and Mark Bauer for discussions on certain aspects. Moreover, I would like to thank the
  mathematical institutes at the Universit\"at Oldenburg and the University of Calgary for their
  hospitality during my visits.  Finally, I would like to thank Renate Scheidler and the anonymous
  referee for their helpful and extensive comments and Rachel Suri for proof-reading.
  
\newcommand{\etalchar}[1]{$^{#1}$}
\providecommand{\bysame}{\leavevmode\hbox to3em{\hrulefill}\thinspace}
\providecommand{\MR}{\relax\ifhmode\unskip\space\fi MR }
\providecommand{\MRhref}[2]{%
  \href{http://www.ams.org/mathscinet-getitem?mr=#1}{#2}
}
\providecommand{\href}[2]{#2}

\end{document}